\DeclareMathOperator{\im}{im}
\DeclareMathOperator{\Sing}{Sing}
\DeclareMathOperator{\Aut}{Aut}
\newcommand{\PP}{{\mathbb{P}}}
\theoremstyle{definition}
\newtheorem{definition}{Definition}[section]
\newtheorem{example}[definition]{Example}
\newtheorem{notation}[definition]{Notation}
\newtheorem{remark}[definition]{Remark}
\theoremstyle{plain}
\newtheorem{lemma}[definition]{Lemma}
\newtheorem{proposition}[definition]{Proposition}
\newtheorem{theorem}[definition]{Theorem}
\newtheorem{corollary}[definition]{Corollary}
\begin{document}
\title{\textbf{Star points on smooth hypersurfaces}}
\author{Filip Cools\footnote{K.U.Leuven, Department of Mathematics,
Celestijnenlaan 200B, B-3001 Leuven, Belgium, email:
Filip.Cools@wis.kuleuven.be; the author is a Postdoctoral Fellow of the Research Foundation - Flanders
(FWO).} \hspace{0.2mm} and \addtocounter{footnote}{5}Marc Coppens\footnote{Katholieke
Hogeschool Kempen, Departement Industrieel Ingenieur en Biotechniek,
Kleinhoefstraat 4, B-2440 Geel, Belgium; K.U.Leuven, Department of Mathematics,
Celestijnenlaan 200B, B-3001 Leuven, Belgium; email: Marc.Coppens@khk.be}}

\maketitle {\footnotesize \emph{\textbf{Abstract.---} A point $P$ on a smooth hypersurface $X$ of degree $d$ in $\mathbb{P}^N$ is called a star point if and only if the intersection of $X$ with the embedded tangent space $T_P(X)$ is a cone with vertex $P$. This notion is a generalization of total inflection points on plane curves and Eckardt points on smooth cubic surfaces in $\mathbb{P}^3$. We generalize results on the configuration space of total inflection points on plane curves to star points. We give a detailed description of the configuration space for hypersurfaces with two or three star points. We investigate collinear star points and we prove that the number of star points on a smooth hypersurface is finite. \\ \\
\indent \textbf{MSC.---} 14J70, 14N15, 14N20}}
\\ ${}$

\section{Introduction}

In our papers \cite{CC1} and \cite{CC2}, we studied the locus of
smooth plane curves of degree $d$ containing a given number of total
inflection points. As a starting point in \cite{CC1}, we examined
possible configurations of points and lines that can appear as such
total inflection points and the associated tangent lines. In this paper, we generalize this point of view to higher dimensional
varieties.

In case $C$ is a smooth plane curve and $P$ is a point on $C$, the
intersection $T_P(C)\cap C$ of the tangent line with $C$ is a
$0$-dimensional scheme of length $d$. The point $P$ is called a total inflection
point of $C$ if and only if this scheme has maximal multiplicity (being $d$) at $P$. Therefore, as a generalization, we
consider points $P$ on smooth hypersurfaces $X$ of degree $d$ in
$\mathbb{P}^N$ such that the intersection $T_P(X)\cap X$ of the
tangent space with $X$ has multiplicity $d$ at $P$. This condition
is equivalent to $T_P(X)\cap X$ being a cone with vertex $P$ in
$T_P(X)$. Because of this pictorial description, we call such point $P$ a star point on $X$.

If $P$ is a total inflection point of a plane curve $C$ and $L=T_P(C)$, the intersection scheme $C\cap L$ is fixed: it is the divisor $dP$ on $L$. Therefore, a configuration space for total inflection points on plane curves is defined using pairs $(L,P)$ with $L$ a line on $\mathbb{P}^2$ and $P$ a point on $L$. In case $N>2$, $P$ is a star point on a smooth hypersurface $X$ and $\Pi=T_P(X)$, then $X\cap\Pi$ is not fixed in advance. Therefore, a configuration space for star points on hypersurfaces in $\mathbb{P}^N$ of degree $d$ is defined using triples $(\Pi,P,C)$ with $\Pi$ a hyperplane in $\mathbb{P}^N$, $P$ a point on $\Pi$ and $C$ a cone hypersurface of degree $d$ in $\Pi$ with vertex $P$. When restricting to smooth hypersurfaces (as done in this paper), one can assume $C$ being smooth outside $P$.

As in the case of plane curves, we prove that there is a strong relation between the space of hypersurfaces having a given number $e$ of star points and the space of associated configurations. Using this relation we find a lower bound for the dimension of the components of such space of hypersurfaces. We call this lower bound the expected dimension. We give a complete description of the configuration space associated to hypersurfaces $X$ having two and three star points. In case $e=2$, we have two components with one having the expected dimension and the other one having larger dimension. The component with unexpected dimension corresponds to the case when the line spanning two star points is contained in $X$. In case $e=3$, we have components of the expected dimension but also components with larger dimension.

In case $X$ is a smooth hypersurface of degree $d$ in $\mathbb{P}^N$ and $L$ is a line on $X$, then there are at most two star points of $X$ on $L$. On the other hand, there do exist smooth hypersurfaces $X$ in $\mathbb{P}^N$ of degree $d$ such that for some line $L$ the intersection $X\cap L$ consists of $d$ star points of $X$. In case $X$ is a smooth hypersurface of degree $d$ in $\mathbb{P}^N$ and $L\not\subset X$ is a line containing at least $d-1$ star points of $X$, then $L$ contains exactly $d$ star points. This generalizes the classical similar fact on inflection points on cubic curves and it is an extra indication that the concept of star points is the correct generalization of total inflection points on plane curves. We show that the case with three star points and the case with collinear star points are the basic cases.

Although there exist smooth hypersurfaces having many star points (e.g. the Fermat hypersurfaces), we prove that there are only finitely many if $d\geq 3$.  \\

Star points are intensively studied in the case of smooth cubic surfaces $X$ in $\mathbb{P}^3$. In this case, a star point $P$ is
a point such that $3$ lines on $X$ meet at $P$. The study of the lines
on a smooth cubic surface is very classical; it is
well-known that there are exactly $27$ lines on such a surface (see e.g. \cite{Har}).
The study of smooth cubic surfaces having $3$ lines through one point started with a paper by F.E.
Eckardt (see \cite{Eck}); therefore such points are often called
Eckardt points. A classification of smooth cubic surfaces according
to their star points (also considering the real case) was
done in \cite{Seg}, using so-called harmonic homologies. In his Ph-D
thesis \cite{Ngu1}, Nguyen gave a classification for the complex
case using the description of a smooth cubic surface as a blowing-up
of $6$ points on $\mathbb{P}^2$ (see also \cite{Ngu2}). From that point of view, the author
also studied singular cubic surfaces (see also \cites{Ngu3,Ngu4}).

Eckardt points on cubic surfaces also appear in the recent paper
\cite{DVG}. For most of the smooth cubic surfaces $S$, there
exists a $K3$-surface $X$ being the minimal smooth model of the
quartic surface $Y$ defined by the Hessian associated to the
equation of $S$. In general, those $K3$-surfaces have Picard number 16
and one has an easy description for the generators of the Neron-Severi
group $NS(X)$. Cubic surfaces $S$ with Eckardt points give rise to
higher Picard numbers. In particular, one also finds $K3$-surfaces with
Picard number 20 (the so-called singular $K3$-surfaces). In those
cases, the Eckardt points give rise to easily determined "new" curves
on $X$.

The generalization to higher dimensional hypersurfaces also occurs
in \cite{CP}. In this paper, the authors study the log
canonical threshold of hyperplane sections for a smooth hypersurface
$X$ of degree $N$ in $\mathbb{P}^N$. Note that such hypersurfaces are
embedded by means of the anticanonical linear system. Assuming the
log minimal model program, they obtain a strong relation between
hyperplane sections coming from tangent spaces at star points of $X$
and the minimal value for the log canonical threshold. Instead of
talking about star points, the authors call them generalized Eckardt
points.

A star point on a smooth hypersurface $X$ in $\mathbb{P}^N$ of degree $d$ gives rise to a subset of the Fano scheme $\mathbb{F}(X)$ of lines on $X$ of dimension $N-3$. The Fano scheme of hypersurfaces is intensively studied (see \cites{Beh,HMP,LaTo}). For a general hypersurface, the Fano scheme is smooth of dimension $2N-d-3$. This also holds for non-general hypersurfaces if $N$ is large with respect to $d$. On the other hand, in case $d>N$, the dimension of the Fano scheme of non-general hypersurfaces (e.g. Fermat hypersurfaces) might be larger than $2N-d-3$. However, the finiteness of the number of star points for smooth hypersurfaces $X$ in $\mathbb{P}^N$ of degree $d\geq 3$ implies that there exists no $(N-2)$-dimensional subset $B$ of $\mathbb{F}(X)$ such that each line $L$ of $B$ meets a fixed curve $\Gamma\subset X$.

Star points are also related to so-called Galois points. Let $X$ be
a smooth hypersurface in $\mathbb{P}^N$, let $P$ be a point in
$\mathbb{P}^N$ and let $H$ be a hyperplane not containing $P$. The
point $P$ is called a Galois point of $X$ in case projection of $X$
to $H$ with center $P$ corresponds to a Galois extension of function
fields. In case $P$ is an inner Galois point on $X$ ("inner" means
$P$ is a point on $X$) then $P$ is a star point of $X$. This is
proved for quartic surfaces in \cite{Yos1}*{Corollary 2.2} and in
general in \cite{Yos2}*{Corollary 6}. \\

The article is structured as follows. In Section \ref{section def}, we give the definition of a star point on a hypersurface and prove a general result on configurations of star points (Theorem \ref{theorem main}). We use a connection between star points and polar hypersurfaces (Lemma \ref{lem3}) to determine all star points on a Fermat hypersurface (Example \ref{exa2}). In Section \ref{section collinear star points}, we study star points on a line $L$ in case $L\subset X$ (Proposition \ref{prop2}) and in case $L\not\subset X$ (Proposition \ref{prop3} and Theorem \ref{theo1}). In Section \ref{section number}, we prove that the number of star points on a smooth hypersurface is always finite (Theorem \ref{theo finite}). In Section \ref{section conf}, we prove the lower bound for the dimension of the configuration space associated to a given number $e$ of star points (Corollary \ref{cor}). We also show that the case of collinear star points and three star points are basic (Proposition \ref{prop5}). In Section \ref{section two starpoints} and Section \ref{section three starpoints}, we investigate the number of components of the configuration set of hypersurfaces with two or three star points.

\section{Definition and first results} \label{section def}

We work over the field $\mathbb{C}$ of complex numbers.

\begin{notation}\label{not1}
Let $X$ be a hypersurface in $\mathbb{P}^N$ and let $P$ be a smooth
point on $X$. We write $T_P(X)$ to denote the tangent space of $X$
inside $\mathbb{P}^N$.
\end{notation}

\begin{definition}\label{def1}
Let $X$ be a hypersurface of degree $d$ in $\mathbb{P}^N$ ($N\geq
2$) and let $P$ be a smooth point on $X$. We say that $P$ is a \textit{star
point} on $X$ if and only if the intersection $T_P(X)\cap X$ (as a
scheme) has multiplicity $d$ at $P$.
\end{definition}

\begin{remark}\label{rem1}
Some remarks related to the previous definition.
\begin{itemize}
\item Let $X$ and $P$ be as in Definition \ref{def1} and let $N\geq 3$. Then $P$ is a
star point of $X$ if and only if the intersection $T_P(X)\cap X$ is
a cone of degree $d$ with vertex $P$ inside $T_P(X)$. In particular,
$X$ has to be irreducible.
\item Let $C$ be a plane curve of degree $d$ and let $P$ be a smooth
point on $C$. Then $P$ is a star point of $C$ if and only if $P$ is
a total inflection point for $C$ (i.e. the tangent line to $C$ at
$P$ intersects $C$ with multiplicity $d$ at $P$). Hence the concept
of a star point is a generalization of the concept of a total
inflection point of a plane curve.
\item Let $X$ be a surface of degree $d$ in projective space
$\mathbb{P}^3$ and let $P$ be a star point on $X$. Then the
intersection of the tangent plane to $X$ at $P$ with $X$ is (as a
set) a union of lines through $P$ in that tangent plane. In case
$d=3$ and $X$ is smooth, the intersection is the union of three lines on $X$ through $P$ (here we also use Lemma \ref{lem1}).
Classically, such a point $P$ is called an Eckardt point on $X$.
\end{itemize}
\end{remark}

\begin{lemma}\label{lem1}
Let $X$ be a smooth hypersurface of degree $d$ in $\mathbb{P}^N$
($N\geq 3$) and let $P$ be a star point on $X$. Then the
intersection $X\cap T_P(X)$ is smooth outside the vertex $P$.
\end{lemma}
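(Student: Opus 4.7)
The plan is to translate the star-point hypothesis into a very explicit normal form for the defining equation of $X$ near $P$, and then show that a hypothetical singular point of the cone $X\cap T_P(X)$ (away from the vertex) would force a singular point of $X$ itself, contradicting smoothness.

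First I would choose projective coordinates $x_0,\dots,x_N$ so that $P=(1:0:\cdots:0)$ and $T_P(X)=\{x_N=0\}$. The defining polynomial $F$ of $X$ then has no pure $x_0^d$ term, and the smoothness of $X$ at $P$ with the stated tangent hyperplane forces the $x_0^{d-1}$-linear part of $F$ to be exactly $x_0^{d-1}x_N$. Grouping the remaining monomials by the power of $x_0$, I would write
\[
F \;=\; x_0^{d-1}x_N + \sum_{i=2}^{d} x_0^{\,d-i}\,G_i(x_1,\dots,x_N),
\]
with $G_i$ homogeneous of degree $i$. The star-point condition says $F|_{x_N=0}$ is a cone with vertex $P$, i.e.\ depends only on $x_1,\dots,x_{N-1}$ and is homogeneous of degree $d$; equivalently $G_i(x_1,\dots,x_{N-1},0)=0$ for every $i<d$. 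After factoring out $x_N$ where possible, this rewrites as
\[
F \;=\; x_N\cdot M(x_0,\dots,x_N) + H(x_1,\dots,x_{N-1}),
\]
where $H(x_1,\dots,x_{N-1}):=G_d(x_1,\dots,x_{N-1},0)$ has degree $d$ and where, crucially, $M$ is monic of degree $d-1$ in $x_0$ (the leading term being $x_0^{d-1}$).

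With this normal form, $X\cap T_P(X)$ is cut out in $T_P(X)\cong\mathbb{P}^{N-1}_{x_0,\dots,x_{N-1}}$ by $H=0$, i.e.\ it is the projective cone over the hypersurface $Z:=\{H=0\}\subset\mathbb{P}^{N-2}_{x_1,\dots,x_{N-1}}$ with apex $P$. A standard computation of partials shows that the singular locus of such a cone is exactly $P$ together with the cone over $\mathrm{Sing}(Z)$; so it is enough to show $Z$ is smooth.

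Suppose instead that $(y_1:\dots:y_{N-1})$ is a singular point of $Z$, so $H$ and all its partial derivatives $\partial H/\partial x_i$ vanish there. For any scalar $t\in\mathbb{C}$, set $Q_t:=(t:y_1:\dots:y_{N-1}:0)$. Using the expression $F=x_N M+H$, one computes directly that $\partial F/\partial x_0$ and every $\partial F/\partial x_i$ ($1\le i\le N-1$) vanish at $Q_t$ (the first because of the factor $x_N$, the others because the $H$-contribution vanishes by singularity of $Z$ at $(y_1:\cdots:y_{N-1})$ and the $x_NM$-contribution vanishes because $x_N=0$ there); moreover $\partial F/\partial x_N$ at $Q_t$ equals $M(t,y_1,\dots,y_{N-1},0)$. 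Since $M$ is monic of degree $d-1\ge 1$ in $x_0$, the univariate polynomial $t\mapsto M(t,y_1,\dots,y_{N-1},0)$ has a complex root $t_0$, and then $Q_{t_0}$ is a singular point of $X$; note $Q_{t_0}\ne P$ because some $y_i$ is nonzero. This contradicts the smoothness of $X$, so $Z$ is smooth and $X\cap T_P(X)$ is smooth away from $P$.

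The only step that requires any care is the derivation of the normal form, where one must correctly use both the smoothness at $P$ (to force the coefficient of $x_0^{d-1}$ to be exactly $x_N$) and the star condition (to kill the remaining obstructions modulo $x_N$); the rest is then a short partial-derivative bookkeeping combined with the observation that the polynomial $M(\,\cdot\,,y,0)$ has a root because it is nonconstant.
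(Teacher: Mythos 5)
Your proof is correct and follows essentially the same route as the paper: put $F=x_N\,G+H(x_1,\dots,x_{N-1})$ in the normal form forced by smoothness at $P$ and the cone condition, observe that along the line joining $P$ to a singular point of the cone all partials of $F$ except $\partial F/\partial x_N=G$ vanish, and then find a zero of $G$ on that line to contradict the smoothness of $X$. Your normalization of $M$ to be monic of degree $d-1$ in $x_0$, combined with parametrizing the line by the $x_0$-coordinate, even streamlines the paper's argument by removing its separate case where $G$ restricted to the line is a nonzero constant (handled in the paper via the extra point $R=(0:1:0:\cdots:0)$).
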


\begin{proof}
Let $C=T_P(X)\cap X$ and assume $Q$ is a singular point of $C$
different from $P$. Since $C$ is a cone with vertex $P$ it follows
that all points on the line $<P,Q>$ are singular points on $C$.

Choose coordinates $(X_0:\ldots:X_N)$ in $\mathbb{P}^N$
such that $P=(1:0: \ldots :0)$, the hypersurface $T_P(X)$ has
equation $X_N=0$ and $Q=(1:1:0: \ldots :0)$.
The equation of $X$ is of the form
\begin{displaymath}
F(X_0,\ldots,X_N)=X_N G(X_0,\ldots,X_N)+H(X_1,\ldots,X_{N-1}),
\end{displaymath}
with G (resp. H) homogeneous of degree $d-1$ (resp. $d$),
$G(1,0,\ldots,0)\neq 0$ (because $Z(F)$ has to be smooth at $P$
with tangent space $Z(X_N)$), $H(1,0,\ldots,0)=0$ (because $Q\in
T_P(X)\cap X$) and $(\partial H/\partial X_i)(1,0,\ldots,0)=0$ for
$1\leq i\leq N-1$ (because $C$ is singular at $Q$). Clearly $(a:b:0:
\ldots:0)\in X$ for all $(a:b)\in \mathbb{P}^1$. It follows that
$(\partial F/\partial X_i)(1,c,0,\ldots,0)=0$ for $0\leq i\leq
N-1$ and $(\partial F/\partial X_N)(1,c,0,\ldots,0)=G(1,c,0,\ldots,0)$.
Unless $G(1,T,0,\ldots,0)$ is a constant different
from $0$, it has a zero $c_0$ and then $(1:c_0:0:\ldots:0)$ is a
singular point on $X$, contradicting the assumption that $X$ is
smooth.

In case $G(1,T,0,\ldots,0)$ is a nonzero constant, one has
\begin{displaymath}
G=aX_0^{d-1}+\sum_{i=2}^N X_i G_i(X_0,\ldots,X_N),
\end{displaymath}
with $G_i$ homogeneous of degree $d-2$ and $a\neq 0$. Consider the point
$R=(0:1:0:\ldots :0)\in T_P(X)\cap X$. Still $(\partial F/\partial
X_i)(R)=0$ for $0\leq i\leq N-1$ but also
$(\partial F/\partial X_N)(R)=G(R)=0$, since $\partial F/\partial
X_N=G+X_N\partial G/\partial X_N$. This implies $X$ is singular
at $R$, contradicting the assumptions.
\end{proof}

In this paper we study star points on smooth hypersurfaces. Motivated
by the previous lemma, we introduce the following definition.

\begin{definition}\label{def2}
Let $\mathbb{P}$ be some projective space and let $P$ be a point in
$\mathbb{P}$. A hypersurface $C$ of degree $d$ in $\mathbb{P}$ is
called a \textit{good $P$-cone} of degree $d$ in $\mathbb{P}$ if $C$ is a cone
with vertex $P$ and $C$ is smooth outside $P$.
\end{definition}

Using Definition \ref{def2}, we can restate Lemma \ref{lem1} as
follows. If $X$ is a smooth hypersurface of degree $d$ in
$\mathbb{P}^N$ and if $P$ is a star point on $X$, then $T_P(X)\cap X$
as a scheme is a good $P$-cone of degree $d$ in $T_P(X)$.

We are going to generalize Proposition $1.3$ in \cite{CC1} to the
case of star points. First we introduce some terminology.

\begin{notation}
To a projective space $\mathbb{P}^N$,
we associate the parameter space $\mathcal{P}_d$ of triples $\mathcal{T}=(\Pi,P,C)$ with $\Pi$ a hyperplane in
$\mathbb{P}^N$, $P$ a point on $\Pi$ and $C$ a good $P$-cone of
degree $d$ in $\Pi$. Note that $$\dim (\mathcal{P}_d)=N+(N-1)+{ N+d-2 \choose N-2 }-1=2N+{ N+d-2
\choose N-2 }-2.$$
\end{notation}

Consider such triples $\mathcal{T}_1,
\ldots,\mathcal{T}_e$ with $\mathcal{T}_i=(\Pi_i,P_i,C_i)$ and
$P_i\notin \Pi_j$ for $i\neq j$.

\begin{definition}\label{def3}
We say $\mathcal{L}=\{\mathcal{T}_1,\ldots,\mathcal{T}_e\}$ is
\textit{suited for degree $d$} if there exists a hypersurface $X$ of degree $d$
in $\mathbb{P}^N$ not containing $\Pi_i$ for some $1\leq i\leq e$
and such that $X\cap \Pi_i=C_i$.
\end{definition}

\begin{notation}\label{not2}
Let $\mathcal{L}=\{\mathcal{T}_1,\ldots,\mathcal{T}_e\}$. We write
$$V_d(\mathcal{L})= \{s\in \Gamma(\mathbb{P}^N,
\mathcal{O}_{\mathbb{P}^N}(d))\,|\,C_i\subset Z(s) \},$$ where $Z(s)$ is the zero locus of $s$.
Let $\mathbb{P}_d(\mathcal{L})=\mathbb{P}(V_d(\mathcal{L}))$ be its projectivization, so $\mathbb{P}_d(\mathcal{L})$ is a linear system
of hypersurfaces of degree $d$ in $\mathbb{P}^N$.
For $\mathcal{L}=\emptyset$, we write $V_d=V_d(\mathcal{L})$ and $\mathbb{P}_d=\mathbb{P}_d(\mathcal{L})$ (so $V_d=\Gamma(\mathbb{P}^N,
\mathcal{O}_{\mathbb{P}^N}(d))$).

If $\Pi$ is a hyperplane of $\mathbb{P}^N$, we can consider the restriction map $$r:V_d(\mathcal{L})\rightarrow
\Gamma(\Pi,\mathcal{O}_{\Pi}(d)).$$ Denote the image of $r$ by $V_{\Pi,d}(\mathcal{L})$ and write $\mathbb{P}_{\Pi,d}(\mathcal{L})=\mathbb{P}(V_{\Pi,d}(\mathcal{L}))$ to denote its projectivization. Again, we can consider the case $\mathcal{L}=\emptyset$, and we write $V_{\Pi,d}(\mathcal{L})=V_{\Pi,d}$ and $\mathbb{P}_{\Pi,d}(\mathcal{L})=\mathbb{P}_{\Pi,d}$ in this case.

Note that $\dim(V_d)={ N+d \choose
N }$ and $\dim (V_d(\Pi))={ N-1+d \choose N-1 }$.
\end{notation}

\begin{remark}\label{rem2}
In case $X$ is a quadric in $\mathbb{P}^N$ and $P$ is a smooth point
on $X$, then $T_P(X)\cap X$ is a quadric in $T_P(X)$ singular at $P$.
Since a singular quadric in some projective space is always a cone
with vertex $P$, it follows that $P$ is a star point of $X$.
Therefore all smooth points on a quadric are star points on that
quadric, hence from now on we only consider the case $d\geq 3$.
\end{remark}

\begin{theorem}\label{theorem main}
Assume $\mathcal{L}$ is suited using cones of degree $d\geq 3$.
\begin{enumerate}
\item[(i)] Then $\dim (\mathbb{P}_d(\mathcal{L}))={ d-e+N \choose N }$ (here
${ d-e+N \choose N }=0$ if $e>d$). In case $e\leq d$, a
general element $X$ of $\mathbb{P}_d(\mathcal{L})$ is a smooth
hypersurface of degree $d$.
\item[(ii)] Let $\Pi$ be a hyperplane in $\mathbb{P}^N$ with $P_i\notin
\Pi$ for $1\leq i\leq e$. Let $\Pi'_i=\Pi \cap
\Pi_i$ for $1\leq i\leq e$.

If $e\leq d$, then $\mathbb{P}_{\Pi ,d}(\mathcal{L})$ has dimension
${ d-e+N-1 \choose N-1 }$ and it contains
\begin{displaymath}
\Pi'_1 + \ldots + \Pi'_e + \mathbb{P}_{\Pi,d-e}.
\end{displaymath}
If $e>d$, then $\mathbb{P}_{\Pi ,d}(\mathcal{L})$ has dimension $0$.
\item[(iii)] Let $\Pi$ be a hyperplane as before. Fix a point $P$ in $\Pi$ and a good $P$-cone $C$ of degree $d$ in $\Pi$. Let $\mathcal{T}=(\Pi,P,C)$ and consider $\mathcal{L}'=\mathcal{L}\cap \{\mathcal{T}\}$. Then $\mathcal{L}'$ is suited for degree $d$ if and only if $C\in\mathbb{P}_{\Pi,d}(\mathcal{L})$.
\end{enumerate}
\end{theorem}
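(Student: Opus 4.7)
My plan is to prove parts (i) and (ii) simultaneously by induction on $e$, and then deduce (iii) from (ii). The base case $e = 1$ comes directly from the exact sequence $0 \to l_{\Pi_1} \cdot V_{d-1} \to V_d \to V_{\Pi_1, d} \to 0$, together with the fact that $V_d(\mathcal{T}_1)$ is the preimage of the line $\langle c_1 \rangle$ in $V_{\Pi_1, d}$. For the inductive step, I set $\mathcal{L}'' = \mathcal{L} \setminus \{\mathcal{T}_e\}$, which is itself suited.

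To prove (i) at level $e$ using (ii) at level $e-1$: the space $V_d(\mathcal{L})$ is cut out inside $V_d(\mathcal{L}'')$ as the preimage of $\langle c_e \rangle$ under the restriction $V_d(\mathcal{L}'') \to V_{\Pi_e, d}(\mathcal{L}'')$, and suitedness of $\mathcal{L}$ puts $c_e$ in the image. Thus the codimension equals $\dim V_{\Pi_e, d}(\mathcal{L}'') - 1 = \binom{d-e+N}{N-1}$ by inductive (ii), and Pascal's identity converts the inductive value of $\dim V_d(\mathcal{L}'')$ into $\binom{d-e+N}{N} + 1$. To prove (ii) at level $e$, I analyze the kernel of the restriction $V_d(\mathcal{L}) \to V_{\Pi, d}(\mathcal{L})$: any kernel element has the form $s = l_\Pi \cdot t$ with $t \in V_{d-1}$, and the constraint $c_i \mid l_{\Pi'_i} \cdot t|_{\Pi_i}$ must hold for each $i$. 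The key geometric input is that $l_{\Pi'_i}$ is coprime to $c_i$: the inclusion $\Pi'_i \subseteq C_i$ would, by the cone property and $P_i \notin \Pi'_i$, force the join $P_i * \Pi'_i = \Pi_i$ into $C_i$, contradicting $\deg C_i = d \geq 3$. Hence $c_i \mid t|_{\Pi_i}$, degree reasons give $t|_{\Pi_i} = 0$, so each $l_{\Pi_i}$ divides $t$, and by distinctness $l_{\Pi_1} \cdots l_{\Pi_e}$ divides $t$. For $e \leq d - 1$ this identifies the kernel with $V_{d-1-e}$, and Pascal together with (i) at level $e$ yields $\dim \mathbb{P}_{\Pi, d}(\mathcal{L}) = \binom{d-e+N-1}{N-1}$; the cases $e \geq d$ are handled directly. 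The containment $\Pi'_1 + \cdots + \Pi'_e + \mathbb{P}_{\Pi, d-e} \subseteq \mathbb{P}_{\Pi, d}(\mathcal{L})$ is exhibited explicitly by the hypersurfaces $l_{\Pi_1} \cdots l_{\Pi_e} \cdot u$ with $u \in V_{d-e}$, which lie trivially in $V_d(\mathcal{L})$ since their restriction to each $\Pi_i$ vanishes identically.

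For the generic smoothness claim in (i), I would apply Bertini to $\mathbb{P}_d(\mathcal{L})$, whose set-theoretic base locus is contained in $\bigcup_i C_i$. Smoothness at points $Q \in C_i$ with $Q \neq P_i$ follows from the smoothness of $C_i$ at $Q$ (the good-cone hypothesis) together with $X \cap \Pi_i = C_i$ for generic $X$, verified by a short local-coordinate computation. At the vertex $P_i$, writing $F$ locally as $X_N \cdot G + c_i$ in coordinates with $P_i = (1{:}0{:}\ldots{:}0)$ and $\Pi_i = Z(X_N)$ reduces smoothness to $G(P_i) \neq 0$; the perturbation $l_{\Pi_1} \cdots l_{\Pi_e} \cdot X_0^{d-e}$ has nonzero contribution to the coefficient of $X_0^{d-1} X_N$ (using $P_i \notin \Pi_j$ for $j \neq i$), so smoothness at $P_i$ is a non-trivial linear condition on $V_d(\mathcal{L})$ and hence generically satisfied.

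For part (iii), the forward direction is tautological. Conversely, given $c \in V_{\Pi, d}(\mathcal{L})$, any lift $s_0 \in V_d(\mathcal{L})$ with $s_0|_\Pi = c$ already witnesses suitedness of $\mathcal{L}' = \mathcal{L} \cup \{\mathcal{T}\}$: if some $l_{\Pi_j}$ divided $s_0$, restriction to $\Pi$ would put $\Pi'_j \subseteq C$, and the same cone argument applied to $C$ with vertex $P \notin \Pi_j$ (from the triple-compatibility built into $\mathcal{L}'$) would force $\Pi \subseteq C$, impossible for $\deg C \geq 3$. The main obstacle I anticipate is the kernel analysis in (ii), specifically the geometric fact that a good cone of degree $\geq 2$ cannot contain a hyperplane missing its vertex. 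This is the one place where the good-cone hypothesis does genuine geometric work; the rest is bookkeeping with restriction sequences and binomial identities.
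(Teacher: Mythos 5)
Your proposal is correct and takes essentially the same approach as the paper: the same induction on $e$ via restriction to $\Pi_e$, the same cone--vertex argument (a hyperplane of $\Pi_i$ missing $P_i$ cannot lie on $C_i$) to identify the kernel of restriction as $\pi_1\cdots\pi_e\pi\, V_{d-e-1}$, and the same Bertini-plus-explicit-perturbation argument for generic smoothness at the vertices. The only difference is organizational --- the paper derives (ii) and (iii) from (i) and feeds (iii) at level $e-1$ back into the induction for (i), while you run (i) and (ii) as a simultaneous induction and deduce (iii) afterwards --- which amounts to the same logical content.
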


\begin{proof}
First we are going to prove that (ii) and (iii) follow from (i). So
assume $\mathcal{L}$ is suited and (i) holds for $\mathcal{L}$. Let
$\Pi$ be a hyperplane in $\mathbb{P}^N$ such that $P_i\notin \Pi$
for $1\leq i\leq e$.

First assume $\dim (\mathbb{P}_d(\mathcal{L}))=0$, i.e.
$\mathbb{P}_d(\mathcal{L})$ contains a unique hypersurface $X$ of
degree $d$ not containing $\Pi_i$ for $1\leq i\leq e$. It follows
that $e>d$ and $\Pi_i\cap X=C_i$ for $1\leq i\leq e$. If $\Pi
\subset X$ then $\Pi'_i$ is a hyperplane inside $\Pi_i$ not
containing $P_i$ and contained in $X$. This contradicts the fact
that $C_i=X\cap \Pi_i$ because $C_i$ is a cone in $\Pi_i$ with
vertex $P_i$. It follows that $\Pi \not\subset X$, hence
$\mathbb{P}_{\Pi ,d}(\mathcal{L})$ is not empty. Clearly it has
dimension zero, thus (ii) holds. Let $P$, $C$, $\mathcal{T}$
$\mathcal{T'}$ be as described in (iii). If $X'\in
\mathbb{P}_d(\mathcal{L}')$, then clearly $X'\in
\mathbb{P}_d(\mathcal{L})$, hence $X'=X$ and we need $X\cap \Pi=C$ (we
already know that $\Pi \subset X$ is impossible), so $C\in
\mathbb{P}_{\Pi ,d}(\mathcal{L})$. Conversely, if $C\in
\mathbb{P}_{\Pi ,d}(\mathcal{L})$, then we have $X\cap \Pi =C$, hence
$X\in \mathbb{P}_d(\mathcal{L}')$. Since we already proved that $\Pi
\not\subset X$ it follows that $\mathcal{L}'$ is suited. This proves
(iii) in this case.

Now assume $e\leq d$, hence $\dim(\mathbb{P}_d(\mathcal{L}))={ d-e+N
\choose N }$. Let $\Pi$ be as in (ii) and let $s\in V_d(\mathcal{L})$
with $r(s)=0$ (where $r$ is as in Notation \ref{not2}), hence $\Pi \subset Z(s)$. It follows that
$\Pi'_i\subset Z(s)$ but also $C_i\subset \Pi_i\cap Z(s)$. Since
$P_i$ is a vertex of the cone $C_i$ and $P_i\notin \Pi'_i$, it
follows that $\Pi_i\subset Z(s)$. This proves that $$\ker(r)=\pi_1.\cdots
.\pi_e.\pi.\Gamma (\mathbb{P}^N,\mathcal{O}_{\mathbb{P}^N}(d-e-1),$$
where we write $\pi$ to denote an equation of
$\Pi$ and so on, hence $\dim (\ker (r))={ d-e+N-1 \choose N }$.
This implies
\begin{equation*}\begin{array}{lll} \dim (\im (r))&=&\dim (V_d(\mathcal{L}))- \dim (\ker (r)) \\
&=& {d-e+N \choose N }+1-{ d-e+N-1 \choose N }={ d-e+N-1 \choose N-1}+1, \end{array} \end{equation*}
hence $\dim (\mathbb{P}_{\Pi ,d}(\mathcal{L}))={ d-e+N-1
\choose N-1 }$. Since $$\pi_1. \cdots .\pi_e.\Gamma
(\mathbb{P}^N,\mathcal{O}_{\mathbb{P}^N}(d-e))\subset
V_d(\mathcal{L}),$$ we have $\Pi'_1 + \ldots +
\Pi'_e+\mathbb{P}_{\Pi,d-e}\subset \mathbb{P}_{\Pi ,d}(\mathcal{L})$.
This finishes the proof of (ii).

Let $P\in \Pi$ with $P_i\notin \Pi$ and $C$ a good $P$-cone of degree
$d$ in $\Pi$. Let $\mathcal{T}=(\Pi,P,C)$ and assume
$\mathcal{L}'=\mathcal{L}\cup \{\mathcal{T} \}$ is suited. There
exists $X'\in \mathbb{P}_d(\mathcal{L}')$ with $X'\cap \Pi =C$.
Since $X'\in \mathbb{P}_d(\mathcal{L})$, it follows $C\in
\mathbb{P}_{\Pi ,d}(\mathcal{L})$. Conversely, assume $C\in
\mathbb{P}_{\Pi ,d}(\mathcal{L})$, hence there exists $X\in
\mathbb{P}_d(\mathcal{L})$ with $X\cap \Pi=C$. Assume $X$ contains a
hyperplane $\Pi_i$ for some $1\leq i\leq e$, then $\Pi \cap
\Pi_i\subset X$. Since $P\notin \Pi_i$ and $X\cap \Pi$ is a cone with
vertex $P$ different from $\Pi$, we obtain a contradiction.
Therefore $\Pi_i\not\subset X$. This proves
$\mathcal{L}'$ is suited, finishing the proof of (iii).

Now we are going to prove (i). First we consider the case $e=1$.
Consider the exact sequence
\begin{displaymath}
0\rightarrow
\Gamma(\mathbb{P}^N,\mathcal{O}_{\mathbb{P}^N}(d-1))\rightarrow
\Gamma(\mathbb{P}^N,\mathcal{O}_{\mathbb{P}^N}(d))\rightarrow
\Gamma(\Pi_1,\mathcal{O}_{\Pi_1}(d))\rightarrow 0.
\end{displaymath}
The cone $C_1\subset \Pi_1$ corresponds to a
subspace of $V_{\Pi_1,d}=\Gamma(\Pi_1,\mathcal{O}_{\Pi_1}(d))$ of dimension one and its
inverse image in $V_d=\Gamma(\mathbb{P}^N,\mathcal{O}_{\mathbb{P}^N}(d))$
is equal to $V_d( \{ \mathcal{T}_1
\})$. This proves $\dim (\mathbb{P}_d( \{ \mathcal{T}_1 \})= { N+d-1
\choose d }$ and for $X\in \mathbb{P}_d(\{ \mathcal{T}_1 \})$
general, one has $X\cap \Pi_1=C_1$. This finishes the proof of (i) in
case $e=1$. Moreover, we conclude that each $\{ \mathcal{T}_1 \}$ is
suited.

Now assume $e>1$ and assume $\mathcal{L}$ is suited and let
$\mathcal{L}'= \{ \mathcal{T}_1, \ldots ,\mathcal{T}_{e-1} \}$.
Since $\mathcal{L}'$ is suited we can assume (i) holds for
$\mathcal{L}'$ (using the induction hypothesis). First assume $e-1>d$, hence
$\dim (\mathbb{P}_d(\mathcal{L}'))=0$. Since
$\mathbb{P}_d(\mathcal{L})\neq \emptyset$ and
$\mathbb{P}_d(\mathcal{L})\subset \mathbb{P}_d(\mathcal{L}')$, it
follows $\dim (\mathbb{P}_d(\mathcal{L}))=0$, hence (i) holds for
$\mathcal{L}$. So assume $e-1\leq d$, hence
\begin{displaymath}
\dim (\mathbb{P}_d(\mathcal{L}')= { d-e+1+N \choose N }.
\end{displaymath}
In particular (iii) holds for $\mathcal{L}'$ and we apply it to $\Pi
=\Pi_e$. It follows that $C_e\in
\mathbb{P}_{\Pi_e,d}(\mathcal{L}')$, hence $C_e$ defines a
one-dimensional subspace of $V_{\Pi_e,d}(\mathcal{L}')$. The
inverse image under $r$ is exactly $V_d(\mathcal{L})\subset
V_d(\mathcal{L}')$. Since $$\pi_1. \cdots .\pi_e.\Gamma
(\mathbb{P}^N,\mathcal{O}_{\mathbb{P}^N}(d-e))=\ker(r),$$ it follows that
$\ker(r)=0$ if $e=d+1$. This implies $\dim V_d(\mathcal{L})=1$ in
that case, hence $\dim (\mathbb{P}_d(\mathcal{L}))=0$. In case
$e\leq d$, we find $\dim (\ker (r))= { d-e+N \choose N }$ and so $\dim
(V_d(\mathcal{L}))= { d-e+N \choose N }+1$.

Finally, in case $e\leq d$, we are going to prove that a general
$X\in \mathbb{P}_d(\mathcal{L})$ is smooth. We know that $\Pi_1 +
\ldots + \Pi_e+\mathbb{P}_{d-e}\subset \mathbb{P}_d(\mathcal{L})$. For $$X\in
\mathbb{P}_d(\mathcal{L})\setminus (\Pi_1 + \ldots + \Pi_e +
\mathbb{P}_{d-e}),$$ one has $X\cap \Pi_i=C_i$ for $1\leq i\leq e$.
(Indeed, assume there exists $1\leq i\leq e$ such that $\Pi_i\subset
X$. Then for $j\neq i$, consider $\Pi_i\cap \Pi_j\subset X$. But
$C_j\subset \Pi_j\cap X$ and $C_j$ is a good $P$-cone of degree $d$
and $P_j\notin \Pi_i$, so it follows $\Pi_j\subset X$, hence $X\in
\Pi_1 + \ldots + \Pi_e + \mathbb{P}_{d-e}$.) Hence the fixed locus
of $\mathbb{P}_d(\mathcal{L})$ is equal to $C_1\cup \ldots \cup
C_e$. From Bertini's Theorem it follows that $\Sing (X)\subset C_1
\cup \ldots \cup C_e$ for general $X\in \mathbb{P}_d(\mathcal{L})$.
Since for $X\notin \Pi_1 + \ldots + \Pi_e + \mathbb{P}_{d-e}$, one
has $\Pi_i\cap X=C_i$, so it follows that $\Sing(X)\cap C_i\subset \{
P_i \}$. Therefore, if a general $X\in \mathbb{P}_d(\mathcal{L})$
would be singular, then there exists some $1\leq i\leq e$ such that
$X$ is singular at $P_i$. This implies $P_i$ is a singular point of
all $X\in \mathbb{P}_d(\mathcal{L})$. However, inside $\Pi_1 +
\ldots + \Pi_e + \mathbb{P}_{d-e}$, there exists elements of
$\mathbb{P}_d(\mathcal{L})$ smooth at $P_i$ for all $1\leq i\leq e$
(here we used $e\leq d$). Hence a general $X\in
\mathbb{P}_d(\mathcal{L})$ is smooth.
\end{proof}

To finish this section, we mention an indirect characterization of star points that is
already mentioned in the old paper \cite{Shr}. We start by
recalling the following classical definition.

\begin{definition}\label{def5}
Let $X$ be a hypersurface of degree $d$ in $\mathbb{P}^r$ with
equation $F=0$ and let $P=(x_0: \ldots :x_r)$ be a point in
$\mathbb{P}^r$. The \textit{polar hypersurface} of $P$ with respect to $X$ is
the hypersurface $\Delta_P(X)$ defined by the equation
$\sum^r_{i=0}x_i(\partial F/\partial X_i)=0$. Clearly $Q\in X\cap \Delta_P(X)$ with $Q\neq P$ if and only if $\langle
P,Q\rangle \subset T_Q(X)$.
\end{definition}

The following easy lemma shows how polar hypersurfaces can be used to
find star points. Although the proof can be given using equations,
we prefer to give geometric arguments.

\begin{lemma}\label{lem3}
Let $X$ be a smooth hypersurface in $\mathbb{P}^N$ and let $P\in
\mathbb{P}^N$, then $\Delta_P(X)$ contains a hyperplane $\Pi$ of
$\mathbb{P}^N$ containing $P$ if and only if $P$ is a star point of
$X$ and $\Pi =T_P(X)$.
\end{lemma}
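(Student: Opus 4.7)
My plan is to split into two directions and use Euler's identity as the common tool. Throughout, let $F$ be a degree-$d$ equation for $X$, so $\Delta_P(X)$ has equation $\sum_i p_i\,\partial F/\partial X_i$.

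For the ``only if'' direction, assume $P$ is a star point with $\Pi = T_P(X)$. Euler's identity gives $\Delta_P(X)(P) = d\cdot F(P) = 0$, so $P\in\Delta_P(X)$. For any other $Q\in X\cap\Pi$, the cone property forces the line $\langle P,Q\rangle$ to lie in $X\cap\Pi\subset X$; since $X$ is smooth, $Q$ is smooth on $X$ with $\langle P,Q\rangle\subset T_Q(X)$, whence $Q\in\Delta_P(X)$ by Definition \ref{def5}. Thus $X\cap\Pi\subset\Delta_P(X)$ set-theoretically. By Lemma \ref{lem1}, $X\cap\Pi$ is reduced, so its support is a variety of degree $d$ in $\Pi$. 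If $\Pi\not\subset\Delta_P(X)$, then $\Delta_P(X)\cap\Pi$ would be a hypersurface of degree $d-1$ in $\Pi$ containing a variety of degree $d$, which is impossible.

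For the converse, assume $\Pi\subset\Delta_P(X)$ with $P\in\Pi$. Euler again forces $P\in X$. Differentiating Euler's identity once more yields $\partial\Delta_P(X)/\partial X_j(P) = (d-1)\,\partial F/\partial X_j(P)$ for each $j$, so $\Delta_P(X)$ is smooth at $P$ with $T_P(\Delta_P(X))=T_P(X)$. Since $\Pi$ sits inside $\Delta_P(X)$ and passes through the smooth point $P$, it is contained in $T_P(\Delta_P(X))=T_P(X)$; being a hyperplane itself, $\Pi$ must equal it, so $\Pi=T_P(X)$.

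It remains to show that $P$ is a star point. I would choose coordinates $P=(1:0:\cdots:0)$, $\Pi=\{X_N=0\}$, and expand $F=\sum_{k=0}^{d} X_0^{d-k}\,F_k(X_1,\ldots,X_N)$ with $F_k$ homogeneous of degree $k$; by the previous two steps, $F_0=0$ and (after scaling) $F_1=X_N$. Computing $\Delta_P(X)=\partial F/\partial X_0$ and restricting to $\{X_N=0\}$ leaves $\sum_{k=2}^{d-1}(d-k)X_0^{d-k-1}\,F_k|_{X_N=0}$, which vanishes identically by hypothesis; matching powers of $X_0$ forces $F_k|_{X_N=0}=0$ for every $2\le k\le d-1$. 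Substituting back, $F|_{X_N=0}=F_d(X_1,\ldots,X_{N-1},0)$ is a polynomial in $X_1,\ldots,X_{N-1}$ alone, so $X\cap\Pi$ is a cone with vertex $P$, as required. The main obstacle is precisely this final step: the identifications $P\in X$ and $\Pi=T_P(X)$ drop out of Euler almost formally, but promoting ``$\Pi\subset\Delta_P(X)$'' to the full star-point condition requires the simultaneous vanishing of the \emph{intermediate} pieces $F_k|_{X_N=0}$ for $2\le k\le d-1$, which is why the explicit coordinate expansion (or, equivalently, the language of higher polars) seems unavoidable.
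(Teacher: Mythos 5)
Your proof is correct, and your forward direction is essentially the paper's: both show $X\cap\Pi\subset\Delta_P(X)$ pointwise via the cone property and Definition \ref{def5}, then conclude by comparing degrees $d$ and $d-1$ (your explicit appeal to reducedness via Lemma \ref{lem1} is a small tightening the paper leaves implicit). The converse is where you genuinely diverge. The paper argues geometrically: it cites \cite{Zak}*{Corollary 2.8} to rule out multiple components of $\Pi\cap X$, observes that $\langle P,Q\rangle$ is tangent to each component $Y$ at a general $Q$, and invokes Sard's Lemma to force the projection from $P$ to have positive-dimensional fibers, so each $Y$ is a cone with vertex $P$. You instead use Euler's identity twice to pin down $P\in X$ and $\Pi=T_P(X)$ formally, and then a direct coordinate expansion $F=\sum_k X_0^{d-k}F_k$ to extract $F_k|_{X_N=0}=0$ for $2\le k\le d-1$ by matching powers of $X_0$ in $\partial F/\partial X_0|_{X_N=0}$. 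The authors explicitly remark that ``the proof can be given using equations, we prefer to give geometric arguments,'' so you have supplied precisely the alternative they alluded to. What your route buys: it is self-contained (no external reference to Zak, no Sard), and it identifies $\Pi=T_P(X)$ cleanly before establishing the cone property rather than extracting it afterwards; it also works uniformly in low dimension, where the paper's projection-with-positive-dimensional-fibers argument is awkward. What the geometric route buys is coordinate-freedom and a conceptual explanation (tangency of all lines through $P$ forces cone structure) that generalizes more readily to statements not about a single hyperplane.
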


\begin{proof}
In case $P$ is a star point on $X$ and $Q\in T_P(X)\cap X$ different
from $P$, then $\langle P,Q\rangle \subset X$. Hence $\langle
P,Q\rangle\subset T_Q(X)$ and so $Q\in \Delta_P(X)$. This proves
$$T_P(X)\cap X\subset \Delta_P(X)\cap T_P(X).$$ If $T_P(X)\not\subset \Delta_P(X)$, we have $\deg (T_P(X)\cap X)=d$
and $\deg (\Delta_P(X)\cap T_P(X))=d-1$, a contradiction. We conclude $T_P(X)\subset \Delta_P(X)$.

Conversely, assume there exists a hyperplane $\Pi$ in $\mathbb{P}^r$
containing $P$ and satisfying $\Pi\subset \Delta_P(X)$. If $Q$ is a
singular point of $\Pi \cap X$, then $\Pi =T_Q(X)$. Since $X$ is
smooth, the equality holds at most for finitely many points (see e.g.
\cite{Zak}*{Corollary 2.8}). Therefore $\Pi \cap X$ has no
multiple components. For each component $Y$ of $\Pi\cap X$ and $Q\in
Y$ general, one finds $\langle P,Q\rangle$ is tangent to $Y$ at $Q$.
Hence the projection with center $P$ in $\Pi$ restricted to $Y$ does not
have an injective tangent map. Because of Sard's Lemma, this is only
possible in case this projection has fibers of dimension at least
one. So we conclude that $Y$ is a cone with vertex $P$. It follows
that $P$ is a star point on $X$.
\end{proof}

\begin{example}\label{exa2}
Consider the Fermat hypersurface $X_{d,N}=Z(X_0^d + \ldots +
X_N^d)=Z(F_{d,N})\subset \mathbb{P}^N$. Let $\xi\in \mathbb{C}$ with
$\xi^d=-1$ and let $E_{i,j}(\xi)$ be the point with $x_i=1$, $x_j=\xi$
and $x_k=0$ for $k\neq i,j$. Clearly, $E_{i,j}(\xi)\in X_{d,N}$. We
are going to show that $E_{i,j}(\xi)$ is a star point on $X_{d,N}$.

From $\partial F_{d,N}/\partial X_k=d.X_k^{d-1}$, we find $(\partial
F_{d,N}/\partial X_k)(E_{i,j}(\xi))=0$ in case $k\notin \{ i,j \}$,
$(\partial F_{d,N}/\partial X_i)(E_{i,j}(\xi))=d$ and $(\partial
F_{d,N}/\partial X_j)(E_{i,j}(\xi))=d.\xi^{d-1}=-d.\xi^{-1}$, hence
$T_{E_{i,j}(\xi)}(X)$ has equation
$d(X_i-1)-d.\xi^{-1}(X_j-\xi)=0$, hence $X_i-\xi^{-1}X_j=0$. In order to compute
$T_{E_{i,j}(\xi)}(X)\cap X$, we replace $X_i$ by $\xi^{-1}X_k$ in the equation of $X_{d,N}$ and one
finds $\sum^N_{k=0; k\notin \{ i,j \}}X^d_k=0$. Inside
$T_{E_{i,j}(\xi)}(X)$, this is a cone with vertex $E_{i,j}(\xi)$. So we find
smooth hypersurfaces in $\mathbb{P}^N$ containing a lot
of star points.

Using Lemma \ref{lem3}, we can prove that we found all
star points on $X_{d,N}$. Indeed for $P=(x_0: \ldots :x_N)$, one has
that the polar hypersurface $\Delta_P(X_{d,N})$ has equation $\sum^N_{i=0}x_iX_i^{d-1}=0$. Clearly,
$$\Sing (\Delta_P(X_{d,N}))=Z(X_k\,|\,x_k\neq 0).$$ If $P$ is a star point then
$\Delta_P(X_{d,N})$ should contain a hyperplane, therefore the singular locus of $\Delta_P(X_{d,N})$ has dimension $N-2$. This implies that there exist $0\leq i<j\leq N$
such that $x_k=0$ for $k\notin \{ i,j \}$. Since there is no point
on $X_{d,N}$ having $N$ coordinates equal to $0$ we can assume
$x_i=1$. Since $P\in X$ we need $x_j^d=-1$. So, the hypersurface $X_{d,N}$ has exactly $d{N+1\choose 2}$ star points.
\end{example}

\begin{remark}
On a smooth cubic surface $X$ in $\mathbb{P}^3$, there are exactly $27$ lines. Each of these lines contains at most two star points (see Proposition \ref{prop2}) and each star point gives rise to three lines, hence the number of star points on such a surface $X$ is at most $18$. This upper bound is attained by the Fermat surface $X_{3,3}\subset \mathbb{P}^3$.
\end{remark}

\section{Collinear star points} \label{section collinear star points}

Let $X$ be a smooth hypersurface in $\mathbb{P}^N$ and let $P_1,
P_2$ be two different star points on $X$. Assume $P_2\in
T_{P_1}(X)$. Since $T_{P_1}(X)\cap X$ is a cone with vertex $P_1$,
it follows that the line $L=\langle P_1, P_2\rangle$ is contained in
$X$. We investigate star points on $X$ belonging to a line
$L\subset X$.

\begin{proposition}\label{prop2}
Let $X$ be a smooth hypersurface of degree $d\geq 3$ in
$\mathbb{P}^N$ and let $L$ be a line in $\mathbb{P}^N$ such that
$L\subset X$. Then $X$ has at most two star points on the line $L$.
\end{proposition}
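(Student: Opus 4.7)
The plan is to introduce a Gauss-type morphism associated with the line $L$, reduce to the case $N=3$ by cutting with generic hyperplanes containing $L$, and then apply Riemann--Hurwitz to a degree-$(d-1)$ cover of $\mathbb{P}^1$ on which star points of $X$ on $L$ appear as total ramification points.

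I would start by fixing coordinates so that $L=Z(X_2,\ldots,X_N)$. Because $L\subset X$, the equation of $X$ has the form $F=\sum_{i=2}^NX_iG_i$ with $G_i$ homogeneous of degree $d-1$. Put $g_i(X_0,X_1):=G_i(X_0,X_1,0,\ldots,0)$; smoothness of $X$ along $L$ is equivalent to the $g_i$ having no common zero on $L$. The tangent hyperplane at $P=(a:b:0:\ldots:0)$ is $T_P(X)=Z(\sum_i g_i(a,b)X_i)$, which contains $L$, and $P\mapsto T_P(X)$ therefore defines a morphism $\mu:L\to\mathbb{P}^{N-2}$ of degree $d-1$ into the pencil of hyperplanes through $L$, given explicitly by $\mu(P)=(g_2(P):\cdots:g_N(P))$.

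For $N\geq 4$ I would reduce to $N=3$ as follows. Let $H\supset L$ be a generic hyperplane. Bertini ensures $X\cap H$ is smooth outside $L$, and at any $P\in L$ the scheme $X\cap H$ is smooth at $P$ if and only if $H\neq T_P(X)$; since the hyperplanes $T_P(X)$ sweep out a curve in the $(N-2)$-dimensional pencil and $N-2\geq 2$, a generic $H$ avoids all of them. If $P$ is a star point of $X$ and $H\neq T_P(X)$, then $T_P(X\cap H)\cap(X\cap H)=(T_P(X)\cap X)\cap H$ is the intersection of a cone with vertex $P$ with a linear subspace through $P$, hence again a cone with vertex $P$. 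Thus $P$ remains a star point of $X\cap H$, and iterating $N-3$ times reduces to a smooth surface in $\mathbb{P}^3$ of degree $d$ containing $L$ with the original star points on $L$ preserved.

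Assuming now $N=3$, the morphism $\mu=(g_2:g_3):L\to\mathbb{P}^1$ has degree exactly $d-1$ since $\gcd(g_2,g_3)=1$. For $P=(a:b:0:0)$ with $g_3(a,b)\neq 0$, substitute $X_3=-\frac{g_2(a,b)}{g_3(a,b)}X_2$ into $F$ to obtain $F|_{T_P(X)}=X_2\cdot R_P$, with $R_P$ homogeneous of degree $d-1$; thus $T_P(X)\cap X=L\cup Z(R_P)$ inside $T_P(X)\cong\mathbb{P}^2$. The star-point condition $\operatorname{mult}_P(T_P(X)\cap X)=d$ amounts to $\operatorname{mult}_P(R_P)=d-1$, which for a form of degree $d-1$ says that $R_P$ lies in the ideal generated by the two linear forms $X_2$ and $aX_1-bX_0$ that vanish at $P$. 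Splitting $R_P=\phi_P(X_0,X_1)+X_2\psi_P$, one finds $\phi_P=g_2-\frac{g_2(a,b)}{g_3(a,b)}g_3$, and the condition forces $\phi_P$ to be a scalar multiple of $(aX_1-bX_0)^{d-1}$. Equivalently, $g_3(a,b)g_2(X_0,X_1)-g_2(a,b)g_3(X_0,X_1)$ is a constant multiple of $(aX_1-bX_0)^{d-1}$, which says exactly that the scheme-theoretic fiber $\mu^{-1}(\mu(P))$ equals $(d-1)P$, i.e., $\mu$ is totally ramified at $P$ with index $d-1$.

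Finally, Riemann--Hurwitz for the degree-$(d-1)$ cover $\mu:\mathbb{P}^1\to\mathbb{P}^1$ gives $\sum_Q(e_Q-1)=2d-4$; each total ramification point contributes $d-2$, so there are at most $\lfloor(2d-4)/(d-2)\rfloor=2$ of them whenever $d\geq 3$, bounding the number of star points on $L$ by two. The main obstacle is the local calculation in the $N=3$ case that converts the multiplicity-$d$ condition on $T_P(X)\cap X$ into the clean algebraic statement that $\phi_P$ is a perfect $(d-1)$-th power of a linear form vanishing at $P$; once this is in hand, the Bertini-based reduction from higher $N$ and the Riemann--Hurwitz count are both essentially formal.
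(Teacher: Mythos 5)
Your proof is correct and follows essentially the same route as the paper: a reduction to $N=3$ via generic hyperplane sections through $L$ (Bertini plus avoiding the one-parameter family of tangent planes along $L$), followed by identifying star points on $L$ with total ramification points of a degree-$(d-1)$ map $L\to\mathbb{P}^1$ and invoking the bound of two such points. Your Gauss map $\mu=(g_2:g_3)$ is just an explicit coordinate description of the $g^1_{d-1}$ that the paper obtains from the pencil $\{\Pi\cap X-L\}$ of planes containing $L$, so the two arguments coincide.
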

\begin{proof}
From the assumptions, it follows that $N\geq 3$.

We first consider the case $N=3$, thus $L$ is a divisor on $X$. For
a hyperplane $\Pi$ in $\mathbb{P}^3$, we write $\Pi\cap X$ to denote the
effective divisor on $X$ corresponding to $\Pi$ (hence it is the
intersection considered as a scheme). We consider the pencil
$\mathbb{P}=\{ \Pi\cap X-L\,|\,\Pi \mbox{ is a plane in } \mathbb{P}^3 \mbox{
containing } L \}$ on $X$.

Assume there exist $D\in \mathbb{P}$ with $L\subset D$, hence there
is a plane $\Pi\subset \mathbb{P}^3$ with $\Pi\cap X\geq 2L$. It follows
that $\Pi=T_P(X)$ for all $P\in L$. If $X$ would have a star point $P$
on $L$ then any $Q\in L\setminus \{ P \}$ would be a singular point
of $T_P(X)\cap X$, this is impossible because of Lemma \ref{lem1}.
So, since for each $D\in \mathbb{P}$ one has $L\not\subset D$, the pencil
$\mathbb{P}$ induces a $g^1_{d-1}$ on $L$.

If $P\in L$ is a star point of $X$, then there is a divisor $L_1 +
\ldots + L_{d-1}$ in $\mathbb{P}$ with $L_1, \ldots, L_{d-1}$ lines
through $P$ different from $L$, hence $(d-1)P\in g^1_{d-1}$. Since
$g^1_{d-1}$ on $\mathbb{P}^1$ can have at most two total ramification
points, it follows that $X$ has at most two star points on $L$. (Note
that here we use $d\geq 3$. Indeed, this lemma clearly does not
hold for quadrics in $\mathbb{P}^3$).

Now assume $N>3$ and assume the proposition holds in
$\mathbb{P}^{N-1}$. Consider the linear system $\mathbb{P}= \{ X\cap
\Pi\,|\, \Pi \mbox{ is a hyperplane in } \mathbb{P}^N \mbox{ containing } L
\}$ in $\mathbb{P}^N$. It has dimension $N-2$ and its fixed locus is
$L$. It follows from Bertini's Theorem that for a general $D\in
\mathbb{P}$ one has $\Sing (D)\subset L$. However, in case $P\in L$
is a singular point of $D\in \mathbb{P}$, then $D=X\cap T_P(X)$, hence
the locus of divisors $D\in \mathbb{P}$ singular at some point of
$L$ has at most dimension one. Since $N-2>1$, it follows that a general $D\in
\mathbb{P}$ is smooth.

So take a general hyperplane $\Pi$ in $\mathbb{P}^N$ containing $L$
and consider $$X'=X\cap \Pi \subset \Pi\cong \mathbb{P}^{N-1},$$
again a smooth hypersurface. If $P$ is a star point for $X$ on $L$,
then by definition it is also a star point for $X'$. By induction,
we know $X'$ has at most two star points on $L$, hence $X$
has at most two star points on $L$.
\end{proof}

\begin{example}
The affine surface $X$ in $\mathbb{A}^3$ with equation
$$yx(y-x)(y+x)+z(x-1)(z+x-1)(z-x+1)=0$$ has degree equal to $4$ and contains the $x$-axis.
On this axis, there are two star points, namely $P_1=(0,0,0)$ with $T_{P_1}X:z=0$ and $P_2=(1,0,0)$ with $T_{P_2}X:y=0$.

\begin{figure}[htp]
\centering
\includegraphics[height=5cm]{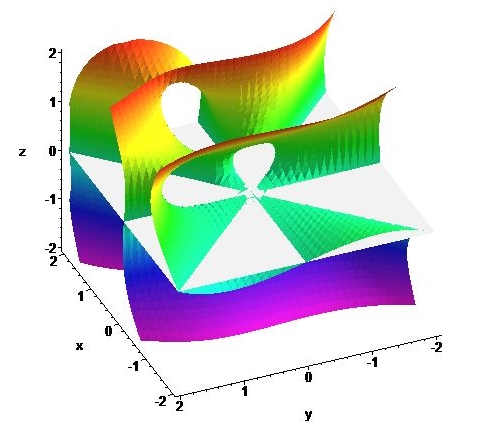}
\vspace{-3mm}
\caption{Star point $P_1$}\label{figure}
\vspace{-2mm}
\end{figure}
On Figure \ref{figure}, one can see that the lines $L_1:y=0$ (i.e. the $x$-axis), $L_2:x=0$, $L_3:y=x$ and $L_4:y=-x$ on $T_{P_1}X$ are contained in $X$.
\end{example}

Now we consider star points on a line $L$ not contained in the
smooth hypersurface $X$. Of course, such a line $L$ contains at most
$d$ star points of $X$. The following result shows this upper bound occurs.

\begin{proposition}\label{prop3}
Fix integers $d\geq 3$ and $N\geq 3$. Let $L$ be a line in
$\mathbb{P}^N$ and let $P_1, \ldots, P_d$ be different points on
$L$. Fix hyperplanes $\Pi_1, \ldots, \Pi_d$ not containing $L$ and such
that $P_i\in \Pi_i$ for $1\leq i\leq d$ and fix a good $P_1$-cone
$C_1$ in $\Pi_1$ of degree $d$. Then there exists a smooth
hypersurface $X$ of degree $d$ in $\mathbb{P}^N$ not containing $L$,
having a star point at $P_1, \ldots, P_d$ with $T_{P_i}(X)=\Pi_i$ for
$1\leq i\leq d$ and $T_{P_1}(X)\cap X=C_1$.
\end{proposition}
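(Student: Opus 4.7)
The plan is to apply Theorem \ref{theorem main} iteratively. Starting from $\mathcal{L}_1=\{(\Pi_1,P_1,C_1)\}$, which is suited by the theorem, I will construct a suited configuration $\mathcal{L}_d=\{\mathcal{T}_1,\ldots,\mathcal{T}_d\}$ by inductively choosing a good $P_i$-cone $C_i$ in $\Pi_i$ for each $i=2,\ldots,d$. Once $\mathcal{L}_d$ is suited, Theorem \ref{theorem main}(i) gives $\dim \mathbb{P}_d(\mathcal{L}_d)=1$ and shows that a general element $X$ of this pencil is a smooth hypersurface. Proposition \ref{prop2} then forces $L\not\subset X$ (since $X$ would otherwise carry $d\geq 3$ star points on $L$), and this $X$ has all the required properties. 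Note that by hypothesis $L\not\subset \Pi_i$, so $L\cap \Pi_i=\{P_i\}$ and hence $P_j\notin \Pi_i$ for $j\neq i$, as required by Theorem \ref{theorem main}.

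For the inductive step, assume $\mathcal{L}_{i-1}$ is suited. By Theorem \ref{theorem main}(iii), extending to $\mathcal{L}_i$ amounts to finding a good $P_i$-cone $C_i\in \mathbb{P}_{\Pi_i,d}(\mathcal{L}_{i-1})$, i.e. an $X\in \mathbb{P}_d(\mathcal{L}_{i-1})$ with $X\cap \Pi_i$ a good cone with vertex $P_i$. By Lemma \ref{lem3}, this translates into the divisibility $H_i\mid \Delta_{P_i}(X)$, where $H_i$ is a linear form defining $\Pi_i$. With coordinates chosen so that $L=\{X_2=\cdots=X_N=0\}$ and $P_j=(a_j:b_j:0:\cdots:0)$, one has $\Delta_{P_j}(X)=a_j\partial F/\partial X_0+b_j\partial F/\partial X_1$, so all $d$ relevant polars lie in the two-dimensional pencil spanned by $\partial F/\partial X_0$ and $\partial F/\partial X_1$. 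I expect this pencil structure to produce the dependencies needed among the star-point conditions.

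The hardest part will be to verify existence of a good cone at each step. A direct dimension count for the intersection of $\mathbb{P}_{\Pi_i,d}(\mathcal{L}_{i-1})$ with the locus of cones over $P_i$ in $\mathbb{P}_{\Pi_i,d}$ works for $i=2$ but becomes over-determined for $i\geq 3$; the collinearity of the $P_j$'s must be exploited to expose the hidden dependencies. Concretely, I would use the ansatz $F=c\prod_{j=1}^d(a_j X_1-b_j X_0)+G$ with $G$ in the ideal $(X_2,\ldots,X_N)$ of $L$. Decomposing $G$ by its order of vanishing along $L$, the tangent condition at each $P_j$ reduces to a Lagrange interpolation at the $d$ distinct points $(a_j:b_j)\in \mathbb{P}^1$, which is always solvable. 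The higher-order components of $G$ provide the remaining freedom needed to impose the cone condition at each $P_j$ simultaneously and to match the prescribed $C_1$. Goodness of the resulting cone is an open condition that can be secured by a Bertini-type argument. Verifying that this construction is compatible with the restrictions $X\cap \Pi_j=C_j$ already imposed by $\mathcal{L}_{i-1}$ is the main technical hurdle.
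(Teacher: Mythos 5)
Your high-level framework is the same as the paper's (produce a suited configuration $\mathcal{L}=\{(\Pi_i,P_i,C_i)\}_{i=1}^d$ and invoke Theorem \ref{theorem main} to get a smooth general member; the observations that $P_j\notin\Pi_i$ for $j\neq i$, that $\dim\mathbb{P}_d(\mathcal{L})=1$ when $e=d$, and that Proposition \ref{prop2} forces $L\not\subset X$ are all fine). But the proof has a genuine gap exactly where you flag it: you never actually establish the existence of good $P_i$-cones $C_i\in\mathbb{P}_{\Pi_i,d}(\mathcal{L}_{i-1})$ for $i\geq 3$. The dimension count is, as you say, over-determined, and the ansatz $F=c\prod_j(a_jX_1-b_jX_0)+G$ does not resolve it: for instance, with $G$ purely of order $d$ along $L$ the gradient at $P_j$ is proportional to $a_jX_1-b_jX_0$, whose zero locus contains $L$ and hence cannot be $\Pi_j$; so the lower-order strata of $G$ must simultaneously fix all $d$ tangent hyperplanes \emph{and} preserve all $d$ cone conditions, and "the higher-order components provide the remaining freedom" is an assertion, not an argument. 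Since this is the entire content of the proposition, the proof is incomplete.

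The paper closes this gap with a single construction that avoids induction altogether: let $Y$ be the cone over $C_1$ with vertex an auxiliary point $V\in L\setminus\{P_1,\ldots,P_d\}$. Because $C_1$ is itself a cone with vertex $P_1\in L$, the hypersurface $Y$ is a cone with vertex the whole line $L$ (in your coordinates its equation involves only $X_2,\ldots,X_N$). Projection from $V$ maps $\Pi_1$ isomorphically onto each $\Pi_i$, carrying $P_1$ to $P_i$ and $C_1$ to $C_i:=Y\cap\Pi_i$, so each $C_i$ is automatically a good $P_i$-cone, and $Y$ itself (which contains no $\Pi_i$) witnesses that $\mathcal{L}$ is suited in one stroke. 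Theorem \ref{theorem main} then finishes as in your outline. If you want to salvage your approach, the cleanest repair is to replace the inductive search for the $C_i$ by this explicit witness.
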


\begin{proof}
Let $Y$ be a cone on $C_1$ with vertex in $L$ different from $P_1,\ldots,P_d$. For $2\leq i\leq d$, let $Y\cap \Pi_i=C_i$. Clearly, $C_i$ is a good $P_i$-cone in $\Pi_i$. Let $\mathcal{T}_i=(\Pi_i,P_i,C_i)$ for $1\leq i\leq d$ and denote $\mathcal{L}=\{\mathcal{T}_1,\ldots,\mathcal{T}_d\}$. Since $Y\in\mathbb{P}_d(\mathcal{L})$ and $\Pi_i\not\subset Y$ for $1\leq i\leq d$, we find $\mathcal{L}$ is suited for degree $d$. From Theorem \ref{theorem main}, it follows a general element $X$ of $\mathbb{P}_d(\mathcal{L})$ is smooth.
\end{proof}

For smooth cubic plane curves $C$, there is the following classical
result: if $P_1, P_2$ are
inflection points of $C$, then the third intersection point of $C$
with the line connecting $P_1$ and $P_2$ is also an inflection
point. For plane curves, this result has a generalization. Indeed, let
$C$ be a smooth plane curve of degree $d\geq 2$, let $L$ be a line
and assume $P_1, \ldots, P_{d-1}$ are total inflection points of $C$
contained in $L$. Then $C$ has one more intersection point $P_d$
with $L$ and $P_d$ is also a total inflection point of $C$. We
generalize this to the case of star points. This result is an extra indication
that the concept of star point is the correct generalization of
the concept of total inflection point.

\begin{theorem}\label{theo1}
Let $N\geq 3$ be an integer and let $X$ be a smooth hypersurface of
degree $d\geq 3$ in $\mathbb{P}^N$. Let $L$ be a line in
$\mathbb{P}^N$ with $L\not\subset X$ and assume $P_1, \ldots,
P_{d-1}$ are $d-1$ different star points of $X$ on $L$. Then $L$
intersects $X$ transversally at $d$ points $P_1, \ldots, P_d$ and
$P_d$ is also a star point of $X$.
\end{theorem}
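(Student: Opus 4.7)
For the transversality part, I would first argue that $L$ meets $X$ in $d$ distinct transverse points. If $P_j\in T_{P_i}(X)$ for some $i\neq j$ in $\{1,\ldots,d-1\}$, then $P_i$ and $P_j$ are two distinct points of the cone $T_{P_i}(X)\cap X$ with vertex $P_i$, so the line $L=\langle P_i,P_j\rangle$ would lie in this cone and hence in $X$, contradicting $L\not\subset X$. Thus $L\not\subset T_{P_i}(X)$, so $L\cap T_{P_i}(X)=\{P_i\}$ scheme-theoretically, and the intersection of $L$ with $X$ is transverse at each $P_i$. Since $L\cdot X$ has degree $d$, there is a unique remaining intersection point $P_d\notin\{P_1,\ldots,P_{d-1}\}$, and it is also transverse; in particular $L\not\subset T_{P_d}(X)$.

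Next I would reduce to the plane-curve statement recalled just before the theorem by slicing with a general $2$-plane $\Sigma\supset L$. By Bertini applied to the $(N-2)$-dimensional family of $2$-planes through $L$, combined with $\Sigma\supset L\not\subset T_{P_i}(X)$ at each $P_i\in L\cap X$, the section $C_\Sigma:=X\cap\Sigma$ is a smooth plane curve of degree $d$ in $\Sigma\cong\mathbb{P}^2$ for general $\Sigma$. For $i\leq d-1$, the tangent line to $C_\Sigma$ at $P_i$ is $T_{P_i}(X)\cap\Sigma$, a line through $P_i$ inside $T_{P_i}(X)$; since $T_{P_i}(X)\cap X$ is a proper closed subvariety of $T_{P_i}(X)$ of degree $d$, for general $\Sigma$ this line is not contained in it and therefore meets $X$ with multiplicity exactly $d$ at $P_i$. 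Hence each $P_i$ is a total inflection point of $C_\Sigma$, and the quoted plane-curve result yields that the remaining intersection point $P_d$ of $L$ with $C_\Sigma$ is a total inflection point of $C_\Sigma$ as well; equivalently, the line $\ell_\Sigma:=T_{P_d}(X)\cap\Sigma$ meets $X$ with multiplicity $d$ at $P_d$.

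To upgrade this to the star-point property at $P_d$, I would use the bijection $\Sigma\leftrightarrow\ell_\Sigma$ between $2$-planes through $L$ and lines through $P_d$ in $T_{P_d}(X)$, with inverse $\ell\mapsto\langle L,\ell\rangle$ (this uses $L\not\subset T_{P_d}(X)$). Under this bijection, ``general $\Sigma$'' corresponds to ``general $\ell$ through $P_d$ inside $T_{P_d}(X)$'', so the previous step gives a dense open family of such lines $\ell$ meeting $X$ with multiplicity $d$ at $P_d$. Writing the equation of the degree-$d$ hypersurface $T_{P_d}(X)\cap X\subset T_{P_d}(X)$ in affine coordinates centered at $P_d$ as $g=h_1+h_2+\cdots+h_d$, with $h_i$ homogeneous of degree $i$ in the tangential directions $b$, the multiplicity at $P_d$ of the line in direction $b$ is the smallest $i$ with $h_i(b)\neq 0$. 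The vanishing $h_1(b)=\cdots=h_{d-1}(b)=0$ on a dense open set of $b$'s then forces $h_1\equiv\cdots\equiv h_{d-1}\equiv 0$, so $g=h_d$ depends only on the tangential coordinates; this is exactly the statement that $T_{P_d}(X)\cap X$ is a cone with vertex $P_d$, i.e., that $P_d$ is a star point of $X$.

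The main obstacle is the last upgrade step: one must arrange the various ``general $\Sigma$'' open conditions (Bertini smoothness, total inflection at each $P_i$ with $i\leq d-1$, and genericity of the resulting line $\ell_\Sigma$) simultaneously, and then pass from ``generic line through $P_d$'' to ``every line through $P_d$'' via the polynomial identities $h_i\equiv 0$ for $i<d$ rather than by a direct semicontinuity argument.
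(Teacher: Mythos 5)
Your argument is correct, but it takes a genuinely different route from the paper's. You cut with a general $2$-plane $\Sigma\supset L$, use Bertini together with $L\not\subset T_{P_i}(X)$ to get a smooth plane section $C_\Sigma$, observe that the cone structure of $X\cap T_{P_i}(X)$ forces each $P_i$ ($i\le d-1$) to be a total inflection point of $C_\Sigma$, invoke the plane-curve statement to make $P_d$ a total inflection point of $C_\Sigma$, and then use the linear bijection between planes through $L$ and lines through $P_d$ in $T_{P_d}(X)$ to conclude that a dense family of lines through $P_d$ has contact order $d$ with $X$, whence the lower-order parts $h_1,\dots,h_{d-1}$ of the local equation vanish identically and $X\cap T_{P_d}(X)$ is a cone. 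The paper argues with no genericity at all: for \emph{every} point $P$ of $C_{1,d}=C_1\cap T_{P_d}(X)$ it considers the plane $\Lambda_P=\langle P,L\rangle$, exhibits the $d-1$ explicit lines $\langle P_1,P\rangle,\langle P_2,Q_2\rangle,\dots,\langle P_{d-1},Q_{d-1}\rangle$ inside the degree-$d$ divisor $X\cap\Lambda_P$ coming from the cones $C_1,\dots,C_{d-1}$, and identifies the residual line as $\langle P_d,P\rangle$; letting $P$ vary shows $X\cap T_{P_d}(X)$ is exactly the cone over $C_{1,d}$ with vertex $P_d$. Your approach buys a soft reduction to a one-dimensional fact, at the cost of the Bertini bookkeeping and the density-to-identity step you yourself flag; note also that the paper states the plane-curve result only as motivation and never proves it, so your route should supply it (it follows quickly from $dP_i\sim\mathcal{O}_C(1)$ for $i\le d-1$ and $P_1+\dots+P_d\sim\mathcal{O}_C(1)$, since lines cut out the complete linear system $\lvert\mathcal{O}_C(1)\rvert$). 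The paper's proof is self-contained, avoids genericity, and moreover describes $X\cap T_{P_d}(X)$ explicitly as the cone over $C_1\cap T_{P_d}(X)$. The transversality parts of the two arguments are essentially identical.
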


\begin{proof}
Since $T_{P_i}(X)\cap X$ is a good $P_i$-cone $C_i$ and $L\not\subset X$, it follows that
$L\not\subset T_{P_i}(X)$. In particular, $L$ intersects $X$
transversally at $P_i$ for $1\leq i\leq d-1$, hence $L$ intersects
$X$ at $d$ different points $P_1, \ldots, P_d$. Let $\Pi_i=T_{P_i}(X)$ for $1\leq i\leq d$.

We need to prove that $P_d$ is a star point of $X$. Since $L\not\subset \Pi_d$ it follows that $\Pi_d\cap
\Pi_i$ is a hyperplane in $\Pi_i$ not containing the vertex of the
$P_i$-good cone $C_i$. This implies $C_{i,d}=C_i\cap \Pi_d$ is a
smooth hypersurface of degree d in $\Pi_i\cap \Pi_d$. Take $i=1$ and let
$P\in C_{1,d}$ and consider the plane $\Lambda _P=\langle
P,L\rangle$ spanned by $P$ and $L$. Since $L\not\subset \Pi_d$ and
$P_d, P\in \Pi_d$, it follows that the line $\langle P_d, P\rangle$ is
equal to $\Lambda _P\cap \Pi_d$. Since $P\in X\cap \Pi_1$, it follows
that the line $\langle P,P_1\rangle$ belongs to $X$, hence $\langle
P,P_1\rangle \subset \Lambda _P\cap X$. Let $Q_i=\langle P_1,
P\rangle \cap \Pi_i$ for $2\leq i\leq d-1$, then $Q_i\in X\cap \Pi_i$
while $P_i\neq Q_i$, hence $\langle P_i, Q_i\rangle \subset
C_i=X\cap \Pi_i$ and $\langle P_i, Q_i\rangle \subset \Lambda_P\cap
X$. So we obtain $d-1$ different lines on $X\cap \Lambda_P$, hence
\begin{displaymath}
\langle P_1, P\rangle + \langle P_2,Q_2 \rangle + \ldots + \langle
P_{d-1}, Q_{d-1} \rangle \subset \Lambda _P\cap X
\end{displaymath}
Since $\Lambda _P\not\subset X$, we know that $\Lambda _P\cap X$ is a
curve (effective divisor) of degree $d$ on $\Lambda _P$. Hence
$\Lambda _P\cap X$ needs to be a sum of $d$ lines in $\Lambda _P$.
Since $$P_d\not\in \langle P_1, P\rangle \cup \langle P_2, Q_2
\rangle \cup \ldots \cup \langle P_{d-1}, Q_{d-1} \rangle,$$ it
follows that $\Lambda _P\cap X$ contains a line $T$ through $P_d$.
Since $T\subset X$, it follows that $T\subset \Pi_d$, hence $T\subset
\Pi_d\cap \Lambda_P$ and therefore $T= \langle P_d, P\rangle$. Since
$P$ is any point on $C_{1,d}$ it follows that the cone $C_d$ on
$C_{1,d}$ with vertex $P_d$ is contained in $\Pi_d\cap X$. However
this cone is a hypersurface of degree $d$ in $\Pi_d\cap X$, hence
$C_d=\Pi_d\cap X$. Since $C_d$ is a good $P_d$-cone in $\Pi_d$, it
follows that $P_d$ is a star point on $X$.
\end{proof}

\section{The number of star points} \label{section number}

The number of total inflection points on a smooth plane curve is
bounded because a total inflection point gives some contribution to
the divisor corresponding to the inflection points of the associated
two-dimensional linear system. The degree of this divisor is fixed by
the degree of the plane curve. For the case of star points on
hypersurfaces of dimension at least two, such argument is not
available.

Although there exist hypersurfaces having a large
number of star points (see Proposition \ref{prop3} or Example \ref{exa2}),
we prove the finiteness of the number of
star points on a smooth hypersurface. First we prove that the locus
of star points form a Zariski-closed subset. This implies that a
smooth hypersurfaces $X$ having infinitely many star points should contain
a curve $\Gamma$ such that each point on that curve is a star point
of $X$.

\begin{lemma}\label{lem2}
Let $X\subset \mathbb{P}^N$ be a smooth hypersurface of degree
$d\geq 3$ and let $ST(X)$ be the set of star points on $X$. Then the set
$ST(X)$ is a Zariski-closed subset of $X$.
\end{lemma}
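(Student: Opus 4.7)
My plan is to apply Lemma \ref{lem3}, which identifies $ST(X)$ with the locus $\{P \in X : T_P(X) \subset \Delta_P(X)\}$. It then suffices to show that this containment condition cuts out a Zariski-closed subset of $X$.

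To this end, I would construct a morphism $\Phi \colon X \to (\mathbb{P}^N)^\vee \times \mathbb{P}(V)$ with $V = \Gamma(\mathbb{P}^N, \mathcal{O}_{\mathbb{P}^N}(d-1))$, sending $P$ to the pair $(T_P(X), [\Delta_P(X)])$, and then exhibit $ST(X)$ as the preimage of a closed incidence locus. Let $F$ be the defining equation of $X$. The tangent-hyperplane component $P \mapsto [\partial_0 F(P) : \cdots : \partial_N F(P)]$ is a morphism on all of $X$, by smoothness of $X$. The polar component $P \mapsto [\sum_i p_i\, \partial_i F]$ is induced by the linear map $\mathbb{C}^{N+1} \to V$, so it is globally defined provided $\partial_0 F, \ldots, \partial_N F$ are linearly independent in $V$. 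For this, a relation $\sum_i c_i\, \partial_i F = 0$ would yield $D_c^k F = 0$ for all $k \geq 1$ (by iterated directional differentiation), hence $F(X + tc) \equiv F(X)$ as a polynomial identity; but then every line through a point of $X$ and the point $[c]$ lies on $X$, so $X$ is a cone with vertex $[c]$ — impossible for a smooth hypersurface of degree $d \geq 3$.

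Next, I would verify that the incidence set
\begin{equation*}
\Sigma = \{(H, [G]) \in (\mathbb{P}^N)^\vee \times \mathbb{P}(V) : H \subset Z(G)\}
\end{equation*}
is Zariski-closed. This is standard: in a standard affine chart of $(\mathbb{P}^N)^\vee$ where the linear form $\ell$ defining $H$ has one coefficient normalized to $1$, one solves $\ell = 0$ for the corresponding variable and substitutes into $G$; the condition $\ell \mid G$ is equivalent to the vanishing of the resulting polynomial, whose coefficients depend polynomially on the coefficients of $\ell$ and $G$. Patching across the standard affine charts gives closedness of $\Sigma$. Combining, $ST(X) = \Phi^{-1}(\Sigma)$ is closed in $X$.

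The single step requiring genuine thought is the linear independence of $\partial_0 F, \ldots, \partial_N F$, which is what makes the polar map a morphism on all of $X$; the closedness of the incidence $\Sigma$ and the overall pullback argument are then formal.
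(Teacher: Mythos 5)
Your proof is correct, but it takes a genuinely different route from the paper's. You rely on Lemma \ref{lem3} to rewrite $ST(X)$ as $\{P\in X : T_P(X)\subset \Delta_P(X)\}$ and then pull back a closed incidence locus along the morphism $P\mapsto (T_P(X),[\Delta_P(X)])$; the two points you flag as needing care are indeed the right ones, and both check out. The linear independence of the partials follows exactly as you say (a relation $\sum_i c_i\partial_i F=0$ makes $X$ a cone with vertex $[c]$, which is then a singular point since $d\geq 2$), and the Gauss-map component is a morphism by smoothness of $X$; the closedness of $\Sigma$ can also be seen as the image of the proper multiplication map $(\ell,[G'])\mapsto(\ell,[\ell G'])$ from $(\mathbb{P}^N)^\vee\times\mathbb{P}(V_{d-2})$. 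The paper instead works directly from Definition \ref{def1}: it builds the $\mathbb{P}^{N-1}$-bundle $\mathbb{P}(T_X)\to X$ carrying the family of tangent hyperplane sections $T_P(X)\cap X$, uses that the locus where a member of a family of degree-$d$ hypersurfaces has multiplicity $d$ at a marked point is Zariski-closed (upper semicontinuity of multiplicity, checked on local equations), and concludes by properness of the projection. Your argument buys concreteness --- everything reduces to polynomial divisibility conditions on coefficients, with no family or multiplicity machinery --- at the cost of routing through Lemma \ref{lem3}, whose proof itself invokes Sard's lemma and a finiteness result of Zak; the paper's argument is self-contained from the definition and generalizes more readily to other degeneracy conditions on the tangent sections. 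Both are legitimate, and since Lemma \ref{lem3} appears earlier in the paper there is no circularity in your approach.
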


\begin{proof}
Let $(\mathbb{P}^N)^{\ast}$ be the dual space of $\mathbb{P}^N$
(parameterizing hyperplanes in $\mathbb{P}^N$) and let
$\mathcal{H}\subset (\mathbb{P}^N)^{\ast}\times \mathbb{P}^N$ be the
incidence space (as a set it is defined by $(\Pi,P)\in \mathcal{H}$ if
and only if $P\in \Pi$; clearly it is Zariski-closed in
$(\mathbb{P}^N)^{\ast}\times \mathbb{P}^N$). Consider also the
Zariski-closed subset $\widetilde{X}\subset X\times (\mathbb{P}^N)^{\ast}$ (isomorphic to $X$) with $(P,\Pi)\in
\widetilde{X}$ if and only if $\Pi=T_P(X)$. Then
$\mathbb{P}(T_X)$ can be identified with $$(\widetilde{X}\times
\mathbb{P}^N)\cap (X\times \mathcal{H})\subset X\times (\mathbb{P}^N)^{\ast}\times \mathbb{P}^N.$$ The projection
$p:\mathbb{P}(T_X)\rightarrow X$ is a $\mathbb{P}^{N-1}$-bundle and
$\mathcal{X}=(\widetilde{X}\times X)\cap (X\times \mathcal{H})$ is a
divisor in $\mathbb{P}(T_X)$ giving a family $p':\mathcal{X}\rightarrow X$ of hypersurfaces of
degree $d$ for $p$. There is a
natural section $s:X\rightarrow \mathcal{X}$ with
$s(P)=(P,T_P(X),P)$. Using local equations, it is clear that the set
of points $Q$ on $\mathcal{X}$ such that $p'^{-1}(p'(Q))$ has
multiplicity $d$ at $Q$ is a Zariski-closed subset $Y$ of
$\mathcal{X}$. Since $p'(Y\cap s(X))=ST(X)$ and $p'$ is
proper, it follows that $ST(X)$ is Zariski-closed in $X$.
\end{proof}

The proof of the following theorem is inspired by \cite{LaTo}. In our situation, it is possible to work a bit more geometrically.

\begin{theorem} \label{theo finite}
Let $X$ be a smooth hypersurface of degree $d\geq 3$ in
$\mathbb{P}^n$, then $X$ has at most finitely many star points.
\end{theorem}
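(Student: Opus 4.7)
By Lemma \ref{lem2}, $ST(X)$ is Zariski closed in $X$, so my plan is to show that $ST(X)$ contains no irreducible curve. Suppose for contradiction that $\Gamma\subset ST(X)$ is an irreducible curve; I will derive a contradiction.

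My first step is a covering statement: for every $Q\in X$, the polar hypersurface $\Delta_Q(X)$, of degree $d-1$, meets the curve $\Gamma$ by a dimension count (a hypersurface in $\mathbb{P}^N$ always intersects a curve). Picking $P\in\Delta_Q(X)\cap\Gamma$, polar incidence gives $Q\in T_P(X)$; since $P$ is a star point, $Q\in T_P(X)\cap X=C_P$, a cone with vertex $P$, so $\langle P,Q\rangle\subseteq C_P\subseteq X$. Thus every point of $X$ lies on a line of $X$ that passes through $\Gamma$.

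Next, I would form the incidence $\mathcal{I}=\{(P,Q)\in\Gamma\times X:Q\in T_P(X)\}$, which by the star-point property equals $\{(P,Q):\langle P,Q\rangle\subseteq X\}$. The fibre over $P\in\Gamma$ is $C_P$, of dimension $N-2$, so $\dim\mathcal{I}=N-1=\dim X$, and the covering statement makes the projection $\pi\colon\mathcal{I}\to X$ surjective. If $\pi$ were not generically finite, then for general $Q\in X$ the closed subset $\{P\in\Gamma:Q\in T_P(X)\}\subseteq\Gamma$ would be positive-dimensional, hence equal to the irreducible $\Gamma$; fixing any $P_0\in\Gamma$, the closed condition $Q\in T_{P_0}(X)$ would hold on a dense open of $X$ and thus on all of $X$, forcing $X\subseteq T_{P_0}(X)$, which is impossible for $d\geq 2$. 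Hence $\pi$ is generically finite. Passing to the Fano scheme, $\mathcal{V}=\{(P,L)\in\Gamma\times\mathbb{F}(X):P\in L\}$ has dimension $1+(N-3)=N-2$, because the lines of $X$ through a star point $P$ form the $(N-3)$-dimensional base of $C_P$; its image $B\subseteq\mathbb{F}(X)$ is therefore an $(N-2)$-dimensional family of lines on $X$, each meeting $\Gamma$, whose union is all of $X$.

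The remaining — and principal — obstacle is to rule out such a family $B$ on a smooth hypersurface of degree $d\geq 3$; this is the point where the argument of \cite{LaTo} enters, worked slightly more geometrically in the present setup. The plan is to analyse the normal bundle $N_{L/X}$ of a general $L\in B$: on $L\cong\mathbb{P}^1$ it has rank $N-2$ and total degree $N-1-d$, and by Grothendieck's theorem it splits as $\bigoplus_i\mathcal{O}(a_i)$ with $\sum a_i=N-1-d$. The $(N-2)$-dimensionality of $B$ forces $h^0(L,N_{L/X})\geq N-2$, bounding the splitting type from below, while the condition $L\cap\Gamma\neq\emptyset$ imposes a further first-order vanishing on deformations of $L$ within $B$. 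Because $B$ sweeps out all of $X$, these constraints must hold at every general line of the family, and the resulting computation (in the spirit of \cite{LaTo}) will show that they cannot be satisfied simultaneously when $d\geq 3$, yielding the contradiction. The earlier reductions are elementary, resting only on Lemma \ref{lem2} and the cone structure of the $C_P$; the geometric/differential-geometric obstruction in the final step is the technical heart of the proof.
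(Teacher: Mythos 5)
Your reductions are sound, and your covering argument via the polar hypersurface $\Delta_Q(X)$ is a clean way to justify the claim (which the paper essentially just asserts) that the lines of $B$ sweep out all of $X$; the incidence-variety bookkeeping giving $\dim B = N-2$ is also correct. But the proof stops exactly where the real work begins. Your final paragraph is a plan, not an argument: you assert that the constraints $h^0(L,N_{L/X})\geq N-2$ together with ``$L$ meets $\Gamma$'' cannot be satisfied simultaneously for $d\geq 3$, but you never derive the contradiction, and the naive version of this count cannot work. Smooth hypersurfaces with $d\leq N-1$ do carry $(N-2)$-dimensional families of lines (the Fano scheme then has dimension $2N-d-3\geq N-2$), so the dimension of $B$ alone is no obstruction; everything hinges on exploiting the condition that every line of $B$ meets the fixed curve $\Gamma$, and your sketch does not say how this ``further first-order vanishing'' produces an impossibility, nor where the hypothesis $d\geq 3$ would enter.

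For comparison, the paper's treatment of this step is concrete: for a general $L\in B$ it considers $T=\bigcap_{R\in L}T_R(X)$, which has dimension $N-1$ or $N-2$. If $\dim T=N-1$, all partials of $F$ but one vanish along $L$, and the remaining one, a form of degree $d-1$ on $L\cong\mathbb{P}^1$, has a zero there, giving a singular point of $X$. If $\dim T=N-2$, one first checks $T_P(\Gamma)\not\subset T$ (otherwise $C_P$ would be a cone with positive-dimensional vertex), and then uses holomorphic arcs $P(t)\in\Gamma$, $Q(t)\in X$ and $r(t)=a\,p(t)+b\,q(t)$ to show that the two surviving partials satisfy $\frac{\partial F}{\partial X_2}X_1+\lambda\frac{\partial F}{\partial X_3}X_0=0$ on $L$, hence share a common factor $G$ of degree $d-2\geq 1$; a zero of $G$ is again a singular point of $X$. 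This is precisely where $d\geq 3$ is used ($G$ must be nonconstant). Until you supply an argument of comparable substance for your normal-bundle route, the proposal has a genuine gap at its central step.
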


\begin{proof}
Assume $X$ has infinitely many star points, then there exists a
curve $\Gamma$ on $X$ such that each $P\in \Gamma$ is a star point
on $X$, so $C_P=X\cap T_P(X)$ is a cone with vertex $P$.

Let $\mathbb{G}=\mathbb{G}(1,N)$ be the Grassmannian of lines in $\mathbb{P}^N$. For a line $L\subset \mathbb{P}^N$, we will denote the corresponding point in $\mathbb{G}$ by $l$. For each star point $P\in \Gamma$, the set of lines in $X$ through $P$ gives rise to a subset $\mathcal{C}_P\subset \mathbb{G}$ of dimension $N-3$. By moving $P$ on $\Gamma$ and $\mathcal{C}_P$ inside
$\mathbb{G}$, we obtain a set $B\subset \mathbb{G}$ with $\dim(B)=N-2$.

Let $P$ be a general point on $\Gamma$, pick $l$ general in $\mathcal{C}_P$ and let $Q$ be a general point on $L$. Choose coordinates $(X_0:\ldots:X_N)$ on $\mathbb{P}^N$ such that $P=E_0$, $Q=E_1$ and $T_P(X):X_2=0$, where $E_i$ is the point with zero coordinates except the $i$th coordinate being one. Write $T=\cap_{R\in L} T_R(X)$. Note that $T$ is a linear space of dimension equal to $N-2$ or $N-1$, since $T_Q(C_P)\subset T\subset T_Q(X)$.

Assume $\dim(T)=N-1$. In this case, for each $R\in L$, the tangent space $T_R(X)$ has equation $X_2=0$, thus $\frac{\partial F}{\partial X_i}(R)=0$ for all $i\neq 2$. On the other hand, there is at least one point $R^{\star}\in L$ such that $\frac{\partial F}{\partial X_2}(R^{\star})=0$. This implies $R^{\star}$ is a singular point of $X$, a contradiction.

Now assume $\dim(T)=N-2$. Note that $T_P(\Gamma)\not\subset T$. Indeed, otherwise we have $T_P(\Gamma)\subset T_S(C_P)$ for all $S\in C_P$, hence,  from Sard's Lemma, it follows that the projection of $C_P$ from $T_P(\Gamma)$ has $2$-dimensional fibers. Because it is a projection, those fibers are planes containing $T_P(\Gamma)$ and $C_P$ is a cone with vertex $T_P(\Gamma)$. Since $P$ is the only singular point of $C_P$, this is impossible.

So we can choose the coordinates on $\mathbb{P}^N$ so that $T_P(\Gamma)=\langle P,E_3\rangle$ and $T_Q(X)$ has equation $X_3=0$, thus $T$ has equation $X_2=X_3=0$. For each $R\in L$, we have $\frac{\partial F}{\partial X_i}(R)=0$ for all $i\not\in\{2,3\}$ and the tangent space $T_R(X)$ has equation $\frac{\partial F}{\partial X_2}(R)X_2+\frac{\partial F}{\partial X_3}(R)X_3=0$.

Let $\mathbb{P}^N=\mathbb{P}(W)$ for some $(N+1)$-dimensional vector space $W$ and let $\{e_0,\ldots,e_N\}$ be a basis of $W$ such that $E_i=[e_i]$. Since $T_Q(X)$ has equation $X_3=0$, there exists a holomorphic arc $\{q(t)\}\subset W$ with $Q(t)=[q(t)]\in X$, $q(0)=e_1$ (so $Q=[q(0)]$) and $q'(0)=e_2$. Note that the arc $\{Q(t)\}\subset X$ is not contained in $T_P(X)$. Since the lines in $B$ cover $X$, there exists an arc $\{p(t)\}\subset W$ such that $P(t)=[p(t)]\in \Gamma$, $P(0)=P$ and the lines $\langle P(t),Q(t)\rangle$ are contained in $B$. Write $p'(0)=\lambda.e_3$ with $\lambda \in\mathbb{C}$.

Now let $R=(a:b:0:\ldots:0)$ be general point on $L$, so $\{r(t)\}\subset W$ with $r(t)=a.p(t)+b.q(t)$ is a holomorphic arc with $R(t)=[r(t)]\in X$ and $R(0)=R$. We have $r'(0)=a\lambda.e_3+b.e_2$, hence $$[r(0)+r'(0)]=(a:b:b:a\lambda:0:\ldots:0)\in T_R(X).$$ If $\lambda=0$, we have $\frac{\partial F}{\partial X_2}(R)=0$ for all point $R\in L\setminus\{P\}$, and thus $\frac{\partial F}{\partial X_2}(P)=0$, a contradiction. So $\lambda\neq 0$ and we conclude $\frac{\partial F}{\partial X_2}X_1+\lambda\frac{\partial F}{\partial X_3}X_0=0$ on $L$. This implies the existence of a homogeneous form $G(X_0,X_1)$ of degree $d-2$ such that $\frac{\partial F}{\partial X_2}=\lambda X_0 G$ and $\frac{\partial
F}{\partial X_3}=-X_1 G$ on $L$. Choose $(a^{\star},b^{\star})\neq (0,0)$
such that $G(a^{\star},b^{\star})=0$. This corresponds to a point $R^{\star}\in L$
satisfying $\frac{\partial F}{\partial X_2}(R^{\star})=\frac{\partial F}{\partial
X_3}(R^{\star})=0$. Hence $R^{\star}$ is a singular point on $X$, again a
contradiction.
\end{proof}

\begin{corollary}
Let $X$ be a smooth hypersurface in $\mathbb{P}^N$ of degree $d\geq 3$ and let $\mathbb{F}(X)$ be its Fano scheme of lines. Then there does not exist a $(N-2)$-dimensional subset $B\subset \mathbb{F}(X)$ and a curve $\Gamma\subset X$ such that each line of $B$ meets $\Gamma$.
\end{corollary}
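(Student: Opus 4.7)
The plan is to argue by contradiction, closely following the proof of Theorem~\ref{theo finite}. Suppose such a curve $\Gamma\subset X$ and subset $B\subset \mathbb{F}(X)$ of dimension $N-2$ exist with each line of $B$ meeting $\Gamma$. First, form the incidence variety $\Sigma=\{(P,l)\in\Gamma\times B : P\in L\}$. Each $l\in B$ meets $\Gamma$ in a nonempty subset, and for a general $l\in B$ this subset is finite (else $L$ would be a component of the curve $\Gamma$, and only finitely many $l\in B$ could arise from components of $\Gamma$, contradicting $\dim B = N-2$). Hence $\dim \Sigma = N-2$, and projecting to $\Gamma$ yields, for a general $P\in\Gamma$, a set $\mathcal{C}_P:=\{l\in B : P\in L\}$ of dimension at least $N-3$.

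Next, I would rerun the proof of Theorem~\ref{theo finite}, replacing the good cone $C_P = T_P(X)\cap X$ (obtained there from the star-point hypothesis) by the cone $Z_P\subset T_P(X)\cap X$ of dimension $N-2$ swept out by the lines parameterized by $\mathcal{C}_P$. The original argument uses $C_P$ only through two properties: $C_P$ is a cone with vertex $P$, and at a smooth point $R\in L\setminus \{P\}$ on a generic line $L$ of the base of this cone, the tangent space $T_R(C_P)$ has dimension $N-2$ and is constant along $L$. Both properties hold for $Z_P$: the cone structure is automatic since $Z_P$ is a union of lines through $P$, while generic smoothness follows from the fact that the singular locus of $Z_P$ is a proper sub-cone, meeting a generic line of $\mathcal{C}_P$ only at $P$. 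The remaining steps of the proof of Theorem~\ref{theo finite} then carry over verbatim with $Z_P$ in place of $C_P$: the analysis of $T=\bigcap_{R\in L} T_R(X)$, the coordinate setup, the Taylor expansions via holomorphic arcs $p(t), q(t), r(t)$, and the final production of a form $G(X_0, X_1)$ whose vanishing locus on $L$ exhibits a singular point $R^{\star}\in X$, contradicting smoothness.

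The main obstacle is the step ``$T_P(\Gamma)\not\subset T$'' in the case $\dim T = N-2$, which in the original proof relied on $P$ being the only singularity of the good cone $C_P$. For $Z_P$ this is not automatic, so I would argue as follows: if $T_P(\Gamma)\subset T = T_R(Z_P)$, then by varying $L\in\mathcal{C}_P$ so that $R$ sweeps a dense open subset of $Z_P$, one obtains $T_P(\Gamma)\subset T_S(Z_P)$ for a general $S\in Z_P$; Sard's lemma then forces $Z_P$ to be a cone with vertex containing $T_P(\Gamma)$, and in particular $T_P(\Gamma)\subset Z_P \subset X$. Letting $P$ vary along $\Gamma$ gives a $1$-parameter family of lines in $X$ tangent to $\Gamma$; combining this with the $(N-2)$-dimensionality of $B$ through a second incidence-dimension count (or simply shrinking $B$ to exclude these tangent lines without decreasing its dimension) produces the desired contradiction.
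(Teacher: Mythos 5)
Your route is genuinely different from the paper's, and it has gaps. The paper does not rerun the arc argument at all: it observes that for general $P\in\Gamma$ the union $C_P=\bigcup_{l\in\mathcal{C}_P}L$ is an $(N-2)$-dimensional cone inside the $(N-2)$-dimensional $X\cap T_P(X)$; if $C_P=X\cap T_P(X)$ then $P$ is a star point and Theorem \ref{theo finite} gives infinitely many star points, a contradiction, while if $C_P$ is a proper component then its intersection with another $(N-2)$-dimensional component of $X\cap T_P(X)$ has dimension $\geq N-3\geq 1$ (after disposing of $N=3$ separately) and consists of singular points $Q$ of $X\cap T_P(X)$, i.e. points with $T_Q(X)=T_P(X)$, contradicting \cite[Corollary 2.8]{Zak}. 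Your plan to redo the proof of Theorem \ref{theo finite} with $Z_P$ in place of the good cone could in principle work, but two steps do not ``carry over verbatim.'' First, the construction of the arc $p(t)$ with $\langle P(t),Q(t)\rangle\in B$ uses that the lines of $B$ cover a dense subset of $X$; in the theorem this holds because there $Z_P=X\cap T_P(X)$ is a hyperplane section moving with $P$, whereas in your setting the union of the lines of $B$ could a priori have dimension only $N-2$ (precisely when $Z_P$ is independent of $P$, hence a cone whose vertex contains the span of $\Gamma$), and this degenerate case needs its own treatment --- which essentially forces the paper's star-point/Zak dichotomy anyway.

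Second, and more seriously, your resolution of the step ``$T_P(\Gamma)\not\subset T$'' does not produce a contradiction. From $T_P(\Gamma)\subset T$ you deduce that $Z_P$ is a cone with vertex containing the line $T_P(\Gamma)$, hence $T_P(\Gamma)\subset X$ and a one-parameter family of tangent lines of $\Gamma$ lies in $X$. But that contradicts nothing: a smooth hypersurface of degree $d\geq 3$ in $\mathbb{P}^N$ with $N\geq 5$ can contain planes, and in particular one-parameter families of lines tangent to a curve. ``Shrinking $B$ to exclude these tangent lines'' is a non sequitur: the difficulty is a geometric property of $Z_P$ (a positive-dimensional vertex locus), not the membership of particular lines in $B$, and deleting a one-dimensional subfamily from $B$ changes neither $\mathcal{C}_P$ for general $P$ nor the cone $Z_P$ it sweeps out. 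The obstacle is in fact closable, but by a different observation: if $T_P(\Gamma)\subset T$ then the tangent direction $v$ of $\Gamma$ at $P$ has vanishing $X_2$- and $X_3$-coordinates, so the relation $[r(0)+r'(0)]\in T_R(X)$ collapses to $\frac{\partial F}{\partial X_2}(R)\cdot b=0$ on $L$, which is exactly the $\lambda=0$ contradiction already handled at the end of the proof of Theorem \ref{theo finite}. Since you neither give this argument nor any other working one at the step you yourself identify as the main obstacle, the proof as proposed is incomplete.
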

\begin{proof}
Since a smooth surface $X\subset \mathbb{P}^3$ of degree $d\geq 3$ contains only finitely many lines, the statement is clear for $N=3$. Now assume the statement fails for some $N>3$. So, there exists a smooth hypersurface $X\subset \mathbb{P}^N$ of degree $d\geq 3$, a subset $B\subset \mathbb{F}(X)$ of dimension $N-2$ and a curve $\Gamma\subset X$ such that each line of $B$ meets $\Gamma$. We will denote the line corresponding to a point $l\in B$ by $L$. For each point $P\in\Gamma$, let $\mathcal{C_P}=\{l\in B\,|\,P\in L\}$ and let $C_P=\cup_{l\in\mathcal{C}_P}\,L \subset X\cap T_P(X)$. Note that the dimension of $\mathcal{C}_P$ is equal to $N-3$ (since $\dim(B)=N-2$ and $X\neq T_P(X)$), so $C_P$ is a hypersurface in $X\cap T_P(X)$. Let $P$ be a general point of $\Gamma$. If $C_P=X\cap T_P(X)$, the point $P$ is a star point, hence $X$ has infinitely star points. This is in contradiction with Theorem \ref{theo finite}. Now assume $C_P\neq X\cap T_P(X)$. So $T_P(X)\cap X$ has an irreducible component $D_P$ of dimension $N-2$ different from $C_P$. For each point $Q\in C_P\cap D_P$, we have $T_Q(X)=T_P(X)$ since $Q$ is singular in $X\cap T_P(X)$. This is in contradiction with \cite[Corollary 2.8]{Zak}, since $C_P\cap D_P$ has dimension at least one.
\end{proof}

\section{Configurations of star points} \label{section conf}

In Section $3$ of our paper \cite{CC1}, we obtain a lower bound on the dimension of the configuration space for total inflection points on smooth plane curves. This was obtained by describing that configuration space as an intersection of two natural sections of some natural smooth morphism. Now we  generalize this to the case of star points. We use Notation \ref{not2} and introduce some further notations.

\begin{notation} \label{not4}
The set $\mathcal{P}_d^{e,0}$ consists of $e$-tuples $(\Pi_i,P_i,
C_i)_{i=1}^{e}$ from $\mathcal{P}_d$ satisfying $P_i \notin \Pi_j$ for $i\neq j$. We
write $\mathcal{L}_i$ to denote the associated $i$-tuple $(\Pi_j,
P_j,C_j)^{i}_{j=1}$ for $1\leq i\leq e$.

The set $\mathcal{V}_{d,e}\subset \mathcal{P}_d^{e,0}$ is determined by the
condition that $(\Pi_i,P_i,C_i)_{i=1}^{e}\in \mathcal{P}_d^{e,0}$ belongs to $\mathcal{V}_{d,e}$ if
and only if there exists an irreducible hypersurface $X$ of degree
$d$ in $\mathbb{P}^N$ such that $X\cap \Pi_i=C_i$ for $1\leq i\leq
e$.
\end{notation}

\begin{notation}\label{not6}
Given $\mathcal{L}\in \mathcal{P}_d^{e,0}$, we write
$\Pi_{i,j}=\Pi_i \cap \Pi_j$ for $1\leq i<j\leq e$. Associated to
$\mathcal{L}$, we introduce linear systems $g'_i$ on $\Pi_i$ for $1\leq i\leq
e$ as follows. In case $i\leq d+1$, then $g'_i=\Pi_{1,i}+ \ldots +
\Pi_{i-1,i}+\mathbb{P}_{\Pi_i,d-i+1}$ and $g'_i$ is empty in
case $i>d+1$.

Let $\mathcal{G}^{e}$ be the space of pairs $(g,\mathcal{L})$ with
$\mathcal{L}\in \mathcal{P}_d^{e,0}$ and $g$ is an $(e-1)$-tuple
$(g_2,\ldots,g_e)$ of linear systems $g_i$ on $\Pi_i$ as follows. In case $i\leq
d+1$, then $g_i$ is a linear subsystem of $\mathbb{P}_{\Pi_i,d}$ of
dimension ${ N+d-i \choose N-1 }$ containing $g'_i$ and in case
$i>d+1$, $g_i$ has dimension $0$ (i.e. it consists of a unique
effective divisor). We write $\tau :\mathcal{G}^{e}\rightarrow
\mathcal{P}_d^{e,0}$ to denote the natural projection.
\end{notation}

From the previous definition, it follows that $\tau$ is a smooth
morphism of relative dimension $$f=\sum_{i=2}^{e}\left[ { N+d-1
\choose N-1 }-{ N+d-i \choose N-1 }-1 \right]$$ in case $e\leq
d+1$ and of relative dimension $$f=\sum_{i=2}^{d+1}\left[ {
N+d-1 \choose N-1 }-{ N+d-i \choose N-1 }-1 \right] + (e-d-1)\left[{
N+d-1 \choose N-1 }-1\right]$$ in case $e>d+1$.

In the proof of the next theorem we are going to use the following
definition.

\begin{definition}\label{def4}
For a point $P$ on a hyperplane $\Pi \subset \mathbb{P}^N$ and an
integer $k\geq 1$, we write $k(P\in \Pi )$ to denote the fat point on
$\Pi$ located at $P$ of multiplicity $k$. So,
$k(P\in \Pi )$ is the $0$-dimensional subscheme of $\Pi$ of length ${
N+k-2 \choose N-1 }$ with support $P$ and locally defined at $P$ by
the ideal $\mathcal{M}^k_{P,\Pi}$.
\end{definition}

\begin{theorem}\label{theo2}
There exist two sections $S_1, S_2$ of $\tau$ such that
$\mathcal{V}_{d,e}=\tau (S_1\cap S_2)$.
\end{theorem}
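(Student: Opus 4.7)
The plan is to exhibit $\mathcal{V}_{d,e}$ as the projection of the agreement locus of two natural sections of the smooth morphism $\tau$. Both sections pick, for each $2 \leq i \leq e$, a linear subsystem $g_i \subset \mathbb{P}_{\Pi_i,d}$ of the required dimension $\binom{N+d-i}{N-1}$ by enlarging the fixed system $g'_i$ (of dimension $\binom{N+d-i}{N-1}-1$) by one extra direction; the two sections correspond to two natural choices for that extra direction.

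I would define $S_1 : \mathcal{P}_d^{e,0} \to \mathcal{G}^e$ by $S_1(\mathcal{L})_i = \langle g'_i, C_i\rangle$. This lands in $\mathcal{G}^e$ because $C_i \notin g'_i$: every divisor in $g'_i$ contains each $\Pi_{j,i}$ (with $j<i$) as a component, whereas every irreducible component of the $P_i$-cone $C_i$ passes through $P_i$ and $P_i \notin \Pi_{j,i}$ by the hypothesis $P_i \notin \Pi_j$. Hence $\dim \langle g'_i, C_i\rangle = \binom{N+d-i}{N-1}$ as required, and $S_1$ is clearly algebraic. For $S_2$ I would take $S_2(\mathcal{L})_i = \mathbb{P}_{\Pi_i,d}(\mathcal{L}_{i-1})$: by Theorem \ref{theorem main}(ii) applied (inductively, with help of part (iii)) to $\mathcal{L}_{i-1}$ when it is suited, this linear system has dimension exactly $\binom{N+d-i}{N-1}$ and contains $g'_i$. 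The main technical obstacle is promoting this formula to a morphism over the whole base $\mathcal{P}_d^{e,0}$: by upper semi-continuity, $\dim V_d(\mathcal{L}_{i-1})$ may jump on the non-suited locus. I would resolve this by replacing $V_d(\mathcal{L}_{i-1})$ with the locally free subbundle of $V_d \otimes \mathcal{O}_{\mathcal{P}_d^{e,0}}$ of constant expected rank cut out by the relative incidence conditions (the same device used for the corresponding statement in the plane curve case in \cite{CC1}); this extension agrees with $\mathbb{P}_{\Pi_i,d}(\mathcal{L}_{i-1})$ on the open suited locus, which is all we need since the intersection $S_1\cap S_2$ projects into that locus.

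With both sections in hand, the conclusion follows formally. Equality $S_1(\mathcal{L}) = S_2(\mathcal{L})$ at a fibre of $\tau$ means $\langle g'_i, C_i\rangle = \mathbb{P}_{\Pi_i,d}(\mathcal{L}_{i-1})$ for every $i$; since both sides have the same dimension and both contain $g'_i$, this is equivalent to $C_i \in \mathbb{P}_{\Pi_i,d}(\mathcal{L}_{i-1})$ for every $i$. Induction on $i$ using Theorem \ref{theorem main}(iii) then translates these simultaneous memberships into the single statement that $\mathcal{L}$ is suited, and Theorem \ref{theorem main}(i) yields the required irreducible $X \in \mathbb{P}_d(\mathcal{L})$ (by choosing a general, hence smooth, element when $e \leq d$; when $e > d$ the unique element of $\mathbb{P}_d(\mathcal{L})$ is automatically irreducible, since the irreducibility of each $C_j$ would otherwise force some $\Pi_j \subset X$, contradicting suitedness). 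Putting these together gives $\tau(S_1 \cap S_2) = \mathcal{V}_{d,e}$.
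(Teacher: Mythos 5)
Your $S_1$ is exactly the paper's first section, and your characterization of the agreement locus over the suited part of $\mathcal{P}_d^{e,0}$ (induction on $i$ via Theorem \ref{theorem main}(iii)) also matches the paper. The gap is in the construction of $S_2$. You correctly identify the problem: $\mathbb{P}_{\Pi_i,d}(\mathcal{L}_{i-1})$ has the required dimension $\binom{N+d-i}{N-1}$ only when $\mathcal{L}_{i-1}$ is suited, yet $S_2$ must be a section over \emph{all} of $\mathcal{P}_d^{e,0}$ (otherwise Corollary \ref{cor}, the point of the theorem, does not follow). But your proposed repair --- ``the locally free subbundle of constant expected rank cut out by the relative incidence conditions'' --- is not a construction: the incidence conditions $C_j\subset Z(s)$ for $j\le i-1$ are precisely the conditions defining $V_d(\mathcal{L}_{i-1})$, and they do \emph{not} cut out a constant-rank subbundle; the rank jumps exactly on the non-suited locus. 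There is no canonical ``subbundle of expected rank'' inside a family of non-constant rank, and even a closure over the suited locus would leave you to prove that the limiting fibres contain $g'_i$, have the right dimension, and --- crucially for the inclusion $\tau(S_1\cap S_2)\subset\mathcal{V}_{d,e}$ --- do not accidentally contain $C_i$ at non-suited points.

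The paper's actual device, which is the substantive content of the proof, is to inductively define subspaces $V'_d(\mathcal{L}_i)\subset V_d$ by replacing the full incidence condition along $C_i$ with a weaker fat-point condition (Definition \ref{def4}): inside $\im\bigl(q_i\colon V'_d(\mathcal{L}_{i-1})\to V_{\Pi_i,d}\bigr)$ one takes the kernel of restriction to $\Gamma(\mathcal{O}_{(d-i+2)(P_i\in\Pi_i)})$. Since $\pi_{1,i}\cdots\pi_{i-1,i}V_{\Pi_i,d-i+1}$ always lies in $\im(q_i)$ and already surjects onto the fat-point sections, this kernel is always one-dimensional, so $h_i=\mathbb{P}(\im q_i)$ and $V'_d(\mathcal{L}_i)$ have constant dimension over the whole base; and on the suited locus $C_i$ has multiplicity $d\ge d-i+2$ at $P_i$, so there $V'_d(\mathcal{L}_i)=V_d(\mathcal{L}_i)$ and $h_i=\mathbb{P}_{\Pi_i,d}(\mathcal{L}_{i-1})$ as you intend. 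Without this (or an equivalent explicit flatification), your $S_2$ is undefined exactly where it matters, and the proof is incomplete.
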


\begin{corollary}\label{cor}
Each irreducible component of $\mathcal{V}_{d,e}$ has codimension at least $f$
inside $\mathcal{P}_d^{e,0}$.
\end{corollary}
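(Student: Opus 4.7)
The plan is to deduce the corollary directly from Theorem~\ref{theo2} by a short dimension count inside the smooth total space $\mathcal{G}^{e}$. Since $\tau\colon\mathcal{G}^{e}\to\mathcal{P}_d^{e,0}$ is smooth of relative dimension $f$, the variety $\mathcal{G}^{e}$ is itself smooth and $\dim\mathcal{G}^{e}=\dim\mathcal{P}_d^{e,0}+f$. Each section $S_j$ ($j=1,2$) provided by Theorem~\ref{theo2} is mapped isomorphically by $\tau$ onto $\mathcal{P}_d^{e,0}$, so $S_j$ is a closed subvariety of $\mathcal{G}^{e}$ of pure dimension $\dim\mathcal{P}_d^{e,0}$, that is, of codimension exactly $f$ inside $\mathcal{G}^{e}$.

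Next, I would apply the classical dimension inequality for the intersection of two subvarieties of a smooth ambient variety to $S_1,S_2\subset\mathcal{G}^{e}$: every irreducible component $W$ of $S_1\cap S_2$ satisfies
\[
\dim W \;\geq\; \dim S_1 + \dim S_2 - \dim\mathcal{G}^{e} \;=\; \dim\mathcal{P}_d^{e,0}-f .
\]
Because $\tau|_{S_1}$ is already an isomorphism onto $\mathcal{P}_d^{e,0}$, its further restriction $\tau|_{S_1\cap S_2}$ is an isomorphism onto $\tau(S_1\cap S_2)=\mathcal{V}_{d,e}$, inducing a dimension-preserving bijection between irreducible components of $S_1\cap S_2$ and those of $\mathcal{V}_{d,e}$. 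Combined with the previous inequality, this forces every irreducible component of $\mathcal{V}_{d,e}$ to have dimension at least $\dim\mathcal{P}_d^{e,0}-f$, which is the expected-dimension lower bound promised in the introduction (equivalently, codimension at most $f$ in $\mathcal{P}_d^{e,0}$, in the direction dictated by the identity $\mathcal{V}_{d,e}=\tau(S_1\cap S_2)$).

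All the genuine geometric content is concentrated in Theorem~\ref{theo2} itself, namely the production of the two sections $S_1,S_2$ of $\tau$ whose intersection projects onto $\mathcal{V}_{d,e}$. Once these sections are in hand, the corollary reduces to the intersection-dimension count displayed above and presents no substantial obstacle; the only additional ingredient is the classical lower bound on the dimension of an intersection of two closed subvarieties inside a smooth variety, so I do not anticipate any serious technical hurdle beyond carefully invoking the relative dimension formula already established for $\tau$.
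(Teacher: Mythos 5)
Your argument is correct and is exactly the standard deduction the paper intends: the paper gives no separate proof of the corollary, treating it as an immediate consequence of Theorem \ref{theo2}, and your intersection-dimension count for $S_1\cap S_2$ inside the smooth variety $\mathcal{G}^{e}$ (equivalently, the observation that the section $S_2$ is locally cut out by $f$ equations, so $S_1\cap S_2$ has codimension at most $f$ in $S_1\cong\mathcal{P}_d^{e,0}$) supplies precisely the missing details. You are also right about the direction of the inequality: the argument yields $\dim W\geq\dim\mathcal{P}_d^{e,0}-f$ for each component $W$, i.e.\ codimension \emph{at most} $f$, and the corollary's wording ``at least'' is reversed relative to both the proof and the paper's own use of the statement (the subsequent Remark reads off $\dim(\mathcal{V}_{d,2})\geq d+10$, and the component $\mathcal{V}_1$ of $\mathcal{V}_{d,3}$ actually has codimension $3d-1<f=3d$), so your parenthetical correction is the right reading of the statement.
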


\begin{proof}[Proof of theorem \ref{theo2}]
For $\mathcal{L}\in \mathcal{P}_d^{e,0}$, we define
$S_1(\mathcal{L})=g$ as follows. In case $2\leq i\leq d+1$,
$g_i=\langle g'_i, C_i\rangle$ and in case $i>d+1$, then $g_i=\{ C_i
\}$.

Next we construct the section $S_2(\mathcal{L})=h$.
Consider the surjective map
\begin{displaymath}
q_2:V_d(\mathcal{L}_1)\rightarrow V_{\Pi_2,d}(\mathcal{L}_1).
\end{displaymath}
We know that $\ker(q_1)=\pi_1\pi_2V_{d-2}$ (where $\pi_i$ is the equation of $\Pi_i\subset \mathbb{P}^N$).

Take $h_2=\mathbb{P}_{\Pi_2,d}(\mathcal{L}_1)$. We know
$V_{\Pi_2,d}(\mathcal{L}_1)$ contains $\pi_{1,2}V_{\Pi_2,d-1}$ (where $\pi_{i,j}$ is the equation of $\Pi_{i,j}\subset \Pi_j$) and
$\dim (h_2)={ d+N-2 \choose N-1 }$. From the restriction map
$\mathcal{O}_{\Pi_2}(d)\rightarrow \mathcal{O}_{d(P_2\in \Pi_2)}(d)$,
we obtain a homomorphism
\begin{displaymath}
V_{\Pi_2,d}(\mathcal{L}_1)\rightarrow \Gamma (\mathcal{O}_{d(P_2\in
\Pi_2)}(d)).
\end{displaymath}
The image of $\pi_{1,2}V_{\Pi_2,d-1}$ is equal to $\Gamma
(\mathcal{O}_{d(P_2\in \Pi_2)}(d))$, hence the kernel of this map is
a 1-dimensional vector space $\langle s_2\rangle$. Let
$V'_d(\mathcal{L}_2)=q^{-1}(\langle s_2\rangle)$; it is a
subspace of $V_d$ containing $\pi_1\pi_2V_{d-2}$. Moreover, the
associated linear system $\mathbb{P}(V'_d(\mathcal{L}_2))$ has
dimension ${ N+d-2 \choose N }$ and it induces a unique divisor on
$\Pi_2$ with multiplicity at least $d$ at $P_2$.

Let $2<i\leq e$ with $i\leq d+1$ and assume $h_j$ is constructed for $2\leq j\leq
i-1$ and assume $V'_d(\mathcal{L}_j)\subset V_d$ is constructed such
that $V'_d(\mathcal{L}_j)$ contains $\pi_1. \cdots .\pi_jV_{d-j}$,
such that $V'_d(\mathcal{L}_j)$ is contained in $V'_d(\mathcal{L}_{j-1})$ (with
$V'_d(\mathcal{L}_1)=V_d(\mathcal{L}_1)$)
and the associated linear system $\mathbb{P}(V'_d(\mathcal{L}_j))$
has dimension $ { N+d-j \choose N }$. Assume also that it induces a unique
divisor on $\Pi_j$ with multiplicity at least
$d-j+2$ at $P_j$. The restriction of forms of
degree $d$ to $\Pi_i$ gives rise to a map
\begin{displaymath}
q_i : V'_d(\mathcal{L}_{i-1}) \rightarrow V_{\Pi_i,d}=\Gamma (\Pi_i,
\mathcal{O}_{\Pi_i}(d)).
\end{displaymath}
Assume $s\in \ker (q_i)$, i.e. $\Pi_i\subset Z(s)$. Since $s\in
V_d(\mathcal{L}_1)$, we obtain $Z(s)\cap \Pi_1$ contains $C_1\cup
\Pi_{1,i}$. Since $P_1\notin \Pi_{1,i}$, we find $\Pi_1\subset Z(s)$.
Let $1<i_0\leq i-1$ and assume $\Pi_1 \cup \ldots \cup
\Pi_{i_0-1}\subset Z(s)$, then $Z(s)\cap \Pi_{i_0}$ contains
$$\Pi_{1,i_0}\cup \ldots \cup \Pi_{i_{0}-1,i_0}\cup \Pi_{i_0,i}.$$ If
$Z(s)$ does not contain $\Pi_{i_0}$, then $Z(s)\cap \Pi_{i_0}$ is a
divisor in $\Pi_{i_0}$ having multiplicity at most $d-i_0$ at
$P_{i_0}$. But $s\in V'_d(\mathcal{L}_{i_0})$, hence we obtain a
contradiction and therefore $Z(s)$ contains $\Pi_{i_0}$. In this way
we find
\begin{displaymath}
\ker (q_i)=\pi_1 . \cdots .\pi_iV_{d-i}
\end{displaymath}
(hence this kernel is empty in case $i=d+1$). This implies
\begin{displaymath}
\dim (\im q_i)= { N+d-i+1 \choose N }+1- { N+d-i \choose N } = {
N+d-i \choose N-1 } +1.
\end{displaymath}
Since $\pi_1 . \cdots . \pi_{i-1}V_{d-i+1}\subset
V'_d(\mathcal{L}_{i-1})$, one has $$\pi_{1,i} . \cdots .
\pi_{i-1,i}V_{\Pi_i,d-i+1}\subset \im (q_i),$$ so we take
$h_i=\mathbb{P}(\im q_i)$.

As before, one concludes that the restriction map $$\im
(q_i)\rightarrow \Gamma (\mathcal{O}_{(d-i+2)(P_i\in \Pi_i)})$$ is
surjective, hence it has a one-dimensional kernel $\langle s_i
\rangle$. Let $V'_d(\mathcal{L}_i)=q_i^{-1}(\langle s_i \rangle)$,
then the inclusion $V'_d(\mathcal{L}_i)\subset V'_d(\mathcal{L}_{i-1})$ holds,
$V'_d(\mathcal{L}_i)$ contains $\pi_1. \cdots .\pi_iV_{d-i}$, the
associated linear system $\mathbb{P}(V'_d(\mathcal{L}_i))$ has
dimension ${ N+d-i \choose N }$ and it induces a unique divisor on
$\Pi_i$ with multiplicity at least $d-i+2$ at $P_i$.

In case $i=d+1$ (hence $e\geq d+1$), then
$\mathbb{P}(V'_d(\mathcal{L}_{d+1}))$ consists of a unique divisor
$D$. If $D$ contains $\Pi_i$ for some $i<d+1$ then as before one
proves $D$ contains $\Pi_1, \ldots, \Pi_{d+1}$, hence a
contradiction. So for $i>d+1$, one takes $h_i=\{ D \}$.

Clearly $S_1(\mathcal{L})=S_2(\mathcal{L})$ if and only if
$g_i=h_i$ for $2\leq i\leq e$. The equality $h_2=g_2$ is equivalent to $C_2\in
\mathbb{P}_{\Pi_2,d}(\mathcal{L}_1)$, which is equivalent to $\mathcal{L}_2\in \mathcal{V}_{d,2}$ by Theorem
\ref{theorem main}.
In this case, we also have $V'_d(\mathcal{L}_2)=V_d(\mathcal{L}_2)$.

Let $3\leq i\leq e$ and assume that $h_j=g_j$ for $2\leq j\leq i-1$ is equivalent to $\mathcal{L}_{i-1}\in \mathcal{V}_{d,i-1}$ and
in this case, $V'_d(\mathcal{L}_{i-1})=V_d(\mathcal{L}_{i-1})$. Assume $h_j=g_j$ for $2\leq j\leq i$.
Hence, $\mathcal{L}_{i-1}\in \mathcal{V}_{d,i-1}$ and the image of $q_i$ is equal to $V_{\Pi_i,d}(\mathcal{L}_{i-1})$. From the previous arguments, it
follows that $\mathbb{P}(\im (q_i))=h_i$ contains a unique divisor
having multiplicity at least $\max \{ d-i+2, 0 \}$ at $P_i$, hence
$h_i=g_i$ is equivalent to $C_i\in \mathbb{P}(\im
q_i)=\mathbb{P}_{\Pi_i,d}(\mathcal{L}_i)$. This is equivalent to
$\mathcal{L}_i\in \mathcal{V}_{d,i}$ and by construction
$V'_d(\mathcal{L}_i)=V_d(\mathcal{L}_i)$ in that case.

By induction on $i$, we conclude $S_1(\mathcal{L})=S_2(\mathcal{L})$ if and only if $\mathcal{L}\in \mathcal{V}_{d,e}$.
\end{proof}

\begin{remark}
Take $N=3$, hence $\dim (\mathcal{P}^{e,0}_d)=e(d+5)$.

In case $e=2$, we find $f=d$, hence $\dim (\mathcal{V}_{d,2})\geq d+10$.
Combining this with Theorem \ref{theorem main}, the space of surfaces in
$\mathbb{P}^3$ having at least $2$ star points not contained in a line inside the surface has dimension at least
$d+10+ { d+1 \choose 3 }$. In case $d=3$, we find this space has
dimension at least $17$. In his thesis \cite{Ngu1} (see also
\cite{Ngu2}), Nguyen found exactly two components both having dimension
$17$.

In case $e=3$, we find $f=3d$, hence $\dim(\mathcal{V}_{d,3})\geq 15$. In case
$\mathcal{V}_{d,3}$ would have a component of dimension exactly $15$, it
would be an orbit under the action of $\Aut (\mathbb{P}^3)$.
Combining this description with Theorem \ref{theorem main}, the space of
hypersurfaces with three star points has dimension at least $15+ { d
\choose 3 }$. In case $d=3$, we find this space has dimension at
least $16$. In his work, Nguyen found a unique component of dimension $17$.
This component comes from the fact that a cubic surface $X$
in $\mathbb{P}^3$ having two star points $P_1$ and $P_2$, such that
the line connecting those points is not contained in $X$, has at
least three star points (see Theorem \ref{theo1}). It follows that the space of cubic surfaces with
at least three star points not on a line has only irreducible components
of dimension 16 (but those components do parameterize surface having
more than three star points because of Theorem \ref{theo1}).
\end{remark}

The following proposition is a generalization of a result on total
inflection points on smooth plane curves proved in \cite{Verm} to
the case of star points. It shows that the case of three star points (see Section \ref{section three starpoints})
and collinear star points (see Section \ref{section collinear star points}) are the most basic cases. The proof given in
\cite{Verm} relies on a very general formulated algebraic statement,
that can also be applied in the situation of star points. Here we
give a more direct and more geometric proof based on Theorem
\ref{theorem main}.

\begin{proposition}\label{prop5}
Let $\mathcal{L}=(\Pi_i, P_i, C_i)_{i=1}^{e}\in
\mathcal{P}_d^{e,0}$. Assume that
\begin{enumerate}
\item[(i)] for all $1\leq i_1< i_2< i_3\leq e$, one has $(\Pi_{i_j}, P_{i_j},
C_{i_j})_{j=1}^3\in \mathcal{V}_{d,3}$;
\item[(ii)] for all $1\leq i_1< i_2< \cdots < i_m\leq e$ with $\dim
(\Pi_{i_1}\cap \Pi_{i_2}\cap \cdots \cap \Pi_{i_m})=N-2$, one has
$(\Pi_{i_j}, P_{i_j}, C_{i_j})_{j=1}^m\in \mathcal{V}_{d,m}$;
\end{enumerate}
then $\mathcal{L}\in \mathcal{V}_{d,e}$.
\end{proposition}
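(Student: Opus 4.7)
I would prove Proposition~\ref{prop5} by induction on $e$; the base cases $e \leq 3$ follow directly from hypothesis (i) (with $e = 2$ trivial). For $e \geq 4$, every proper subfamily of $\mathcal{L}$ inherits both (i) and (ii), so by induction each lies in $\mathcal{V}_{d,e'}$. In particular, applying the induction to $\mathcal{L}' = (\mathcal{T}_2, \ldots, \mathcal{T}_e)$ produces some $F \in V_d$ with $F|_{\Pi_j} = c_j f_j$, $c_j \in \mathbb{C}^*$, for $j \geq 2$, where $f_j$ denotes a fixed defining equation of $C_j$ in $\Pi_j$. The task becomes to perturb $F$ on $\Pi_1$ without altering its restrictions to $\Pi_2, \ldots, \Pi_e$, so as to also cut out $C_1$ on $\Pi_1$.

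Hypothesis (i) applied to any triple containing indices $i, j$ forces $f_i|_{\Pi_{i,j}}$ and $f_j|_{\Pi_{i,j}}$ to cut out the same divisor, giving a unique $\lambda_{ij} \in \mathbb{C}^*$ with $f_i|_{\Pi_{i,j}} = \lambda_{ij} f_j|_{\Pi_{i,j}}$; the same triple also yields the cocycle identity $\lambda_{ij} \lambda_{jk} = \lambda_{ik}$. Restricting $F$ itself to $\Pi_{j,k}$ gives $c_k = \lambda_{jk} c_j$, so the scalar $\lambda := c_j / \lambda_{1j}$ is independent of $j \in \{2, \ldots, e\}$. Consequently the degree-$d$ form $\Delta := \lambda f_1 - F|_{\Pi_1}$ on $\Pi_1$ vanishes identically on each $\Pi_{j,1} \subset \Pi_1$ for $j = 2, \ldots, e$.

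The key step is to upgrade this pointwise vanishing to divisibility of $\Delta$ by the product $\pi_{2,1} \cdots \pi_{e,1}$ in $\Gamma(\Pi_1, \mathcal{O}_{\Pi_1}(d))$, counted with multiplicities. When the linear forms $\pi_{j,1}$ are pairwise non-proportional this is immediate from unique factorization. The genuine obstruction arises when a subset $S = \{j_1, \ldots, j_a\} \subseteq \{2, \ldots, e\}$, together with the index $1$, shares a common codimension-$2$ linear space $\Lambda \subset \mathbb{P}^N$, so that all $\pi_{j_l,1}$ are proportional to a single form $\pi_\Lambda$ on $\Pi_1$ and we require $\Delta$ to vanish along $\Lambda$ to order $a$. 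This is precisely what hypothesis (ii) delivers: applied to $\{\mathcal{T}_1\} \cup \{\mathcal{T}_j : j \in S\}$ (whose hyperplanes intersect in $\Lambda$ of dimension $N-2$), it furnishes an $F'' \in V_d$ with $F''|_{\Pi_l} = c''_l f_l$ for $l \in \{1\} \cup S$. A short cocycle calculation shows that $c_{j_l}/c''_{j_l}$ is independent of $l$; subtracting the corresponding scalar multiple of $F''$ from $F$ yields a form vanishing on each $\Pi_{j_l}$, hence divisible by $\pi_{j_1} \cdots \pi_{j_a}$ in $\Gamma(\mathbb{P}^N, \mathcal{O}(d))$, and restriction to $\Pi_1$ delivers the required order-$a$ vanishing of $\Delta$ along $\Lambda$. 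This divisibility step is the main obstacle of the argument.

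With the divisibility in hand, write $\Delta = \pi_{2,1} \cdots \pi_{e,1} \cdot D'$ for some $D' \in \Gamma(\Pi_1, \mathcal{O}_{\Pi_1}(d-e+1))$ (the degree count forces $\Delta = 0$ automatically when $d < e - 1$). Lift $D'$ to $G \in \Gamma(\mathbb{P}^N, \mathcal{O}(d-e+1))$ with $G|_{\Pi_1} = D'$ and set $\tilde F = F + \pi_2 \cdots \pi_e \cdot G$. Then $\tilde F|_{\Pi_j} = c_j f_j$ for $j \geq 2$ while $\tilde F|_{\Pi_1} = \lambda f_1$, so $Z(\tilde F) \cap \Pi_i = C_i$ for every $i$; a general element of the now non-empty linear system $\mathbb{P}_d(\mathcal{L})$ is irreducible by Theorem~\ref{theorem main}(i), showing $\mathcal{L} \in \mathcal{V}_{d,e}$.
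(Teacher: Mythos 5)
Your argument is correct, and it reaches the conclusion by a route that differs in its mechanics from the paper's, although the two proofs share the same skeleton (induction on $e$, realize all but one cone by a single hypersurface, then show the remaining cone lies in the induced linear system on the remaining hyperplane, with hypothesis (ii) reserved for degenerate coincidences of the traces $\Pi_i\cap\Pi_j$). The paper removes the \emph{last} triple: it takes $X'$ realizing $C_1,\dots,C_{e-1}$, and compares the two decompositions of $\varphi'_e=$ equation of $X'\cap\Pi_e$ obtained from Theorem \ref{theorem main}(ii) applied to the two suited subfamilies indexed by $\{1,3,\dots,e-1\}$ and $\{2,3,\dots,e-1\}$; after arranging $\Pi_{1,e}\neq\Pi_{2,e}$, subtracting the two expressions forces $\varphi'_e-c\varphi_e$ to be divisible by $\prod_{i=1}^{e-1}\pi_{i,e}$, and Theorem \ref{theorem main}(iii) finishes. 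Hypothesis (ii) is invoked only once, in the extreme case where all $\Pi_{i,e}$ coincide; intermediate coincidences are absorbed automatically because Theorem \ref{theorem main}(ii) already describes the restricted system with the correct multiplicities. You instead remove the \emph{first} triple and do the bookkeeping by hand: the cocycle of constants $\lambda_{ij}$ (whose existence and multiplicativity do follow from hypothesis (i), via the nonvanishing of $f_j|_{\Pi_{i,j}}$, which holds because a good $P_j$-cone cannot contain a hyperplane of $\Pi_j$ missing $P_j$), the normalization $\lambda=c_j/\lambda_{1j}$, and the divisibility of the defect $\Delta=\lambda f_1-F|_{\Pi_1}$, with hypothesis (ii) applied separately to every group of indices sharing the same trace on $\Pi_1$ (your identity $\lambda=\mu c''_1$, hence $\Delta=-(F-\mu F'')|_{\Pi_1}$, is the point that makes the order-$a$ vanishing work). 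What your version buys is a completely explicit correction term $\pi_2\cdots\pi_e\,G$ and no need to single out a pair with $\Pi_{1,e}\neq\Pi_{2,e}$; what the paper's version buys is economy, since the structure of $\mathbb{P}_{\Pi_e,d}(\cdot)$ from Theorem \ref{theorem main}(ii) replaces both your cocycle computation and your group-by-group divisibility step. One cosmetic remark: your final passage from ``$\mathbb{P}_d(\mathcal{L})$ nonempty with $Z(\tilde F)\cap\Pi_i=C_i$'' to ``$\mathcal{L}\in\mathcal{V}_{d,e}$'' (which requires an \emph{irreducible} such hypersurface) leans on Theorem \ref{theorem main}(i) exactly as the paper's own conclusion does, so it is at the same level of rigor.
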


\begin{proof}
Of course we may assume $e>3$. Assume both conditions hold and
assume the conclusion of the proposition holds for $e-1$ instead of
$e$.

In case $\Pi_{i,e}=\Pi_{j,e}$ for all $1\leq i<j\leq e-1$, then
the second assumption (applied to $i_j=j$ for $1\leq j\leq e$) implies
$\mathcal{L}\in \mathcal{V}_{d,e}$.

So we can assume $\Pi_{1,e}\neq \Pi_{2,e}$. Applying the
induction hypothesis to $i\in \{ 1, \cdots, e-1\}$, we find
$X'\in\mathbb{P}_d$ such that $X'\cap \Pi_i=C_i$ for $1\leq i\leq
e-1$. Let $X'\cap \Pi_e= C'_e$. Let $\varphi'_e$ (resp. $\varphi_e$)
be the equation of $C'_e$ (resp. $C_e$) inside $\Pi_e$. For $j\in\{1,2\}$,
consider the linear system of hypersurfaces $X\in \mathbb{P}_d$ such
that $X\cap \Pi_i=C_i$ for $i\in \{j,3,\ldots,e-1\}$. On $\Pi_e$,
this linear subsystem of $\mathbb{P}_d$ induces a linear subsystem
of $\mathbb{P}_{\Pi_e,d}$. From the induction hypothesis applied to
$\{j,3,\ldots,e\}$ this linear subsystem of
$\mathbb{P}_{\Pi_e,d}$ is equal to $$\left\langle
\Pi_{j,e}+\sum_{i=3}^{e-1}\Pi_{i,e}+\mathbb{P}_{\Pi_e,d-e+2},
C_e\right\rangle$$ in case $d-e+2\geq 0$; otherwise it is $\{ C_e\}$.

Since $C'_e$ belongs to those linear subsystems of
$\mathbb{P}_{\Pi_e,d}$, there exists $g_j\in V_{\Pi_e,d-e+2}$ and
$c_j\in \mathbb{C}$ such that
$\varphi'_e=\pi_{j,e}.\prod_{i=3}^{e-1}\pi_{i,e}.g_j+c_j.\varphi_e$
(here we write $\pi_{i,e}$ to denote the equation of $\Pi_{i,e}$
inside $\Pi_e$) in case $d-e+2\geq 0$; in case $d-e+2<0$ it already
proves $C'_e=C_e$. In case $d-e+2\geq 0$, we find
\begin{displaymath}
\prod_{i=3}^{e-1}\pi_{i,e}.(\pi_{1,e}g_1-\pi_{2,e}g_2)+(c_2-c_1)\varphi_e=0
\end{displaymath}
Since $\pi_{i,e}$ (with $i\in \{3,\ldots,e-1\}$) cannot divide $\varphi_e$, it follows that
$c_1=c_2$ and $\pi_{1,e}g_1=\pi_{2,e}g_2$. So we have that
$g_1=\pi_{2,e}g'_1$ (here we use $\Pi_{1,e}\neq \Pi_{2,e}$), hence
$\varphi'_e=\prod_{i=1}^{e-1}\pi_{i,e}g'_1+c_1\varphi_e$.

Since $\pi_{1,e}$ cannot divide $\varphi'_e$, it follows that
$c_1\neq 0$, thus $$C_e\in \left\langle
\sum_{i=1}^{e-1}\Pi_{i,e}+\mathbb{P}_{\Pi_e,d-e+1}, C'_e\right\rangle\subset
\mathbb{P}_{\Pi_e,d},$$ hence $C_e$ belongs to the linear system on
$\Pi_e$ induced by the linear subsystem of $\mathbb{P}_d$ of
hypersurfaces $X$ satisfying $C_i\subset X$ for $1\leq i\leq e-1$.
From Theorem \ref{theorem main}, it follows that $\mathcal{L}\in
\mathcal{V}_{d,e}$.
\end{proof}

If follows from the considerations in Section \ref{section case III} that in case $$\dim
(\Pi_{i_1}\cap \Pi_{i_2}\cap \cdots \cap \Pi_{i_m})=N-2,$$ the points
$P_{i_1}, P_{i_2}, \ldots ,P_{i_m}$ are collinear. This
situation can also be handled using the arguments from Theorem \ref{theorem main}.

\section{Special case : hypersurfaces with two star points} \label{section two starpoints}

In this Section, we will prove the following result.
\begin{theorem}
Consider the set $\mathcal{S}$ of configurations $\mathcal{L}\in (\mathcal{P}_d)^2$ that are suited for degree $d\geq 3$ in $\mathbb{P}^N$. Then $\mathcal{S}$ has two irreducible components. The set $\mathcal{V}_{d,2}$ is one of these components and it has the expected dimension.
\end{theorem}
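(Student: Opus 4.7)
The plan is to split $\mathcal{S}$ into two strata according to the incidence of the flags, show each is an irreducible subvariety of $(\mathcal{P}_d)^2$, and verify that the two strata are distinct irreducible components of $\mathcal{S}$. I would begin by establishing a dichotomy: for $(\mathcal{T}_1,\mathcal{T}_2)\in\mathcal{S}$ realized by a hypersurface $X$, one has $P_2\in\Pi_1$ if and only if $P_1\in\Pi_2$. Indeed, if $P_2\in\Pi_1$ then $P_2\in X\cap\Pi_1=C_1$, and since $C_1$ is a cone with vertex $P_1\neq P_2$ the line $L=\langle P_1,P_2\rangle$ lies in $C_1\subset X$, hence in $T_{P_2}(X)=\Pi_2$, so $P_1\in\Pi_2$. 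Writing $\mathcal{S}_I=\mathcal{S}\cap\mathcal{P}_d^{2,0}$ and $\mathcal{S}_{II}=\mathcal{S}\cap\{P_2\in\Pi_1\}$, these disjoint strata cover $\mathcal{S}$.

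For the first stratum, $\mathcal{S}_I=\mathcal{V}_{d,2}$. By Theorem \ref{theorem main}(iii) with $\mathcal{L}=\{\mathcal{T}_1\}$, a pair lies in $\mathcal{V}_{d,2}$ if and only if $C_2\in\mathbb{P}_{\Pi_2,d}(\{\mathcal{T}_1\})$, a linear subsystem of $\mathbb{P}_{\Pi_2,d}$. This exhibits $\mathcal{V}_{d,2}$ as a fibration over the irreducible base $\{(\mathcal{T}_1,\Pi_2,P_2):P_2\notin\Pi_1\}$ with fiber the nonempty open intersection of $\mathbb{P}_{\Pi_2,d}(\{\mathcal{T}_1\})$ with the linear space of good $P_2$-cones; both base and fiber are irreducible. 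Corollary \ref{cor} then gives the expected dimension.

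For $\mathcal{S}_{II}$ I would parameterize by $(L,P_1,P_2,\Pi_1,\Pi_2,C_1,C_2)$ with $L=\langle P_1,P_2\rangle\subset\Pi_i$ and $P_i\in L$; the base of five-tuples is irreducible of dimension $4N-4$. Above each base point, admissible cone-pairs are cut out of $\mathbb{P}(V_{\Pi_1,P_1\text{-cones}})\times\mathbb{P}(V_{\Pi_2,P_2\text{-cones}})$ by the linear matching condition $C_1|_{\Pi_1\cap\Pi_2}=C_2|_{\Pi_1\cap\Pi_2}$, equivalent to suitedness via the exact sequence
\[
0\to\pi_1\pi_2V_{d-2}\to V_d\to V_{\Pi_1,d}\oplus V_{\Pi_2,d}\to V_{\Pi_1\cap\Pi_2,d}\to 0.
\]
The containment $L\subset C_i$ is automatic: matching evaluated at $P_j\in\Pi_1\cap\Pi_2$ (with $j\neq i$) yields $F_i(P_j)=F_j(P_j)=0$, which together with the vertex condition $F_i(P_i)=0$ forces $L\subset C_i$. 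Goodness of the cones is open and nonempty (by explicit construction), so $\mathcal{S}_{II}$ is irreducible.

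To conclude the two strata give distinct components, I would note that $\mathcal{S}_{II}$ is Zariski-closed in $(\mathcal{P}_d)^2$ and disjoint from the nonempty $\mathcal{V}_{d,2}$, so $\overline{\mathcal{V}_{d,2}}\neq\mathcal{S}_{II}$. A direct fiber-product count gives $\dim\mathcal{S}_{II}-\dim\mathcal{V}_{d,2}=\binom{N+d-4}{N-2}-2\geq 0$ for $d,N\geq 3$, with equality only if $(N,d)=(3,3)$, so $\mathcal{S}_{II}\not\subset\overline{\mathcal{V}_{d,2}}$: either by dimension, or in the equality case because irreducibility plus equal dimensions would force $\mathcal{S}_{II}=\overline{\mathcal{V}_{d,2}}$, contradicting disjointness from the nonempty $\mathcal{V}_{d,2}$. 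Thus $\mathcal{S}=\mathcal{V}_{d,2}\sqcup\mathcal{S}_{II}$ has exactly these two irreducible components, with $\mathcal{V}_{d,2}$ of expected dimension. The main obstacle will be the fiber-product count in Step 3, which requires identifying $V_{P_1\text{-cones}}\cap V_{P_2\text{-cones}}$ inside $V_{\Pi_1\cap\Pi_2,d}$ as the space of cones with vertex the whole line $L$.
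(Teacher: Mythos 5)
Your decomposition of $\mathcal{S}$ into the stratum $\mathcal{S}_I=\mathcal{V}_{d,2}$ and the stratum $\mathcal{S}_{II}$ where $P_1\in\Pi_2$ and $P_2\in\Pi_1$ is exactly the paper's, and your dichotomy, your identification of suitedness with the matching condition on $\Pi_1\cap\Pi_2$, and your observation that $L=\langle P_1,P_2\rangle\subset C_1\cap C_2$ is forced in the second stratum are all fine. The paper reaches the same conclusions by putting the defining polynomial in normal form ($f=g_{01}X_0X_1+g(X_2,\dots,X_N)$ in the first case, $f=g_{23}X_2X_3+g_2X_2+g_3X_3+g$ in the second). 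One thing the normal form gives that your appeal to Corollary \ref{cor} does not is the upper bound on $\dim\mathcal{V}_{d,2}$: that corollary only bounds the dimension from one side, so you still need to compute the fiber of your fibration; the normal form shows it is a single point ($C_2$ is forced to be the cone with vertex $P_2$ over $C_1\cap\Pi_2\subset\Pi_1\cap\Pi_2$), which is what pins the dimension down to the expected value. That gap is easily filled.

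The genuine problem is in your separation of the two components. Your formula $\dim\mathcal{S}_{II}-\dim\mathcal{V}_{d,2}=\binom{N+d-4}{N-2}-2$ agrees with the paper's counts for $N\ge 4$, but it is off by one when $N=3$: there $\Pi_1\cap\Pi_2=L$ itself, both restrictions $F_i|_{L}$ vanish identically, the matching condition is vacuous, and the correct count is $\dim\mathcal{S}_{II}=2\cdot 3+2\cdot 1+2(d-1)=2d+6$ against $\dim\mathcal{V}_{d,2}=d+10$, i.e.\ the difference is $d-4$, not $d-3$. Hence for $(N,d)=(3,4)$ the two dimensions are equal (your fallback equality argument still applies), but for $(N,d)=(3,3)$ one has $\dim\mathcal{S}_{II}=12<13=\dim\mathcal{V}_{3,2}$, and both branches of your final step fail: a closed irreducible set of smaller dimension can perfectly well lie inside $\overline{\mathcal{V}_{d,2}}$, so disjointness from $\mathcal{V}_{d,2}$ proves nothing. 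For cubic surfaces you therefore need a separate argument that a general point of $\mathcal{S}_{II}$ is not a limit of points of $\mathcal{V}_{3,2}$ --- for instance by exploiting that in $\mathcal{V}_{d,2}$ the two cones are cones over a common divisor $Z(g)\subset\Pi_1\cap\Pi_2$ and hence projectively linked, whereas in $\mathcal{S}_{II}$ the residual lines $C_1-L$ and $C_2-L$ vary independently. (The paper does not spell this step out either, but your write-up asserts an inequality that is false precisely in this case, so the claim ``two irreducible components'' is not yet proved for $N=d=3$.)
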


First assume $\mathcal{L}=(\Pi_i,P_i,C_i)_{i=1}^2$ is a point in $\mathcal{V}_{d,2}$, so $P_1\not\in \Pi_2$ and $P_2\not\in \Pi_1$. We can introduce coordinates $(X_0:\ldots:X_n)$ on $\PP^N$ such that $\Pi_1$ has equation $X_1=0$, $P_1=(1:0:0:\ldots:0)$, $\Pi_2$ has equation $X_0=0$ and $P_2=(0:1:0:\ldots:0)$. Let $X=Z(f)$ be a hypersurface of degree $d$ in $\PP_d(\mathcal{L})$. We can write $f(X_0,\ldots,X_N)$ as
\begin{multline*} g_{01}(X_0,\ldots,X_N)X_0X_1+g_0(X_0,X_2,\ldots,X_N)X_0\\+g_1(X_1,X_2,\ldots,X_N)X_1+g(X_2,\ldots,X_N). \end{multline*}
Since the equation $g_0(X_0,X_2,\ldots,X_N)X_0+g(X_2,\ldots,X_N)=0$ of the cone $C_1$ in $\Pi_1$ has to be independent of the variable $X_0$, we have $g_0(X_0,X_2,\ldots,X_N)\equiv 0$. Analogously, we get that $g_1(X_1,X_2,\ldots,X_N)\equiv 0$, since $C_2\subset \Pi_2$ is given by $g_1(X_1,X_2,\ldots,X_N)X_1+g(X_2,\ldots,X_N)=0$. So, we conclude that
\begin{equation} \label{equation V_d,2} f=g_{01}(X_0,\ldots,X_N)X_0X_1+g(X_2,\ldots,X_N). \end{equation}
The cones $C_1\subset \Pi_1$ and $C_2\subset \Pi_2$ are both determined by $g(X_2,\ldots,X_N)=0$. Thus the dimension of $\PP_d(\mathcal{L})$ equals ${N+d-2\choose N}$ and $\mathcal{V}_{d,2}$ is irreducible and has dimension $$2N+2(N-1)+{N+d-2\choose N-2}-1,$$ which is the expected dimension.\\

Now assume $\mathcal{L}=(\Pi_i,P_i,C_i)_{i=1}^2\not\in \mathcal{P}_d^{2,0}$, so $P_1\in \Pi_2$ or $P_2\in \Pi_1$. If $P_1\in \Pi_2$, we have that $\langle P_1,P_2\rangle\subset C_2$, hence also $\langle P_1,P_2\rangle\subset C_1$ or $P_2\in \Pi_1$. We may choose coordinates $(X_0:\ldots:X_N)$ on $\PP^N$ such that $P_1=(1:0:0:\ldots:0)$, $P_2=(0:1:0:\ldots:0)$, $\Pi_1:X_2=0$ and $\Pi_2:X_3=0$. Let $X=Z(f)$ be a hypersurface in $\PP_d(\mathcal{L})$. We can write $f$ as \begin{multline*} g_{23}(X_0,\ldots,X_N)X_2X_3+g_2(X_0,X_1,X_2,X_4,\ldots,X_N)X_2\\+g_3(X_0,X_1,X_3,X_4,\ldots,X_N)X_3+g(X_0,X_1,X_4,\ldots,X_N).\end{multline*}
Since $C_1\subset \Pi_1$ is given by $$g_3(X_0,X_1,X_3,X_4,\ldots,X_N)X_3+g(X_0,X_1,X_4,\ldots,X_N)=0,$$ we have that $g_3$ and $g$ are independent of $X_0$. Analogously, by considering $C_2\subset \Pi_2$, we get that $g_2$ and $g$ are independent of $X_1$. We conclude that $f$ can be written as
\begin{multline} \label{equation line in X} g_{23}(X_0,\ldots,X_N)X_2X_3+g_2(X_0,X_2,X_4,\ldots,X_N)X_2\\+g_3(X_1,X_3,X_4,\ldots,X_N)X_3+g(X_4,\ldots,X_N) \end{multline}

If $N=3$, we have $g\equiv 0$ and $$f=g_{23}(X_0,X_1,X_2,X_3)X_2X_3+g_2(X_0,X_2)X_2+g_3(X_1,X_3)X_3.$$
The cones $C_1\subset\Pi_1$ and $C_2\subset \Pi_2$ are determined by respectively $g_3=0$ and $g_2=0$, so the dimension of $\PP_d(\mathcal{L})$ equals ${d+1\choose 3}+1$ and the dimension of the irreducible locus in $\mathcal{P}_d^2$ is $2.3+2.1+2(d-1)=2(d+3)$.

If $N>3$, the dimension of $\PP_d(\mathcal{L})$ is ${N+d-2\choose N}$ and the irreducible locus in $\mathcal{P}_d^2$ has dimension equal to $$2N+2(N-2)+{N+d-4\choose N-4}+2{N+d-3\choose N-2}-1.$$

\section{Special case : hypersurfaces with three star points} \label{section three starpoints}

\subsection{Components of $\mathcal{V}_{d,3}$}

In this section (including Subsections \ref{section case I}-\ref{section case IV}), we will prove the following theorem.
\begin{theorem}
The configuration space $\mathcal{V}_{d,3}$ (where $d\geq 3$) has $2d-2$ irreducible components, of which $\phi(d)+\phi(d-1)$ for $N=3$ and $\phi(d)$ for $N>3$ are of the expected dimension.
\end{theorem}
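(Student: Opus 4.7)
The plan is to stratify $\mathcal{V}_{d,3}$ according to the incidence pattern of the three pointed hyperplanes $(\Pi_i,P_i)_{i=1}^3$, and for each stratum identify the irreducible components and compute their dimensions. By the observation opening Section~\ref{section collinear star points}, the incidence $P_i\in\Pi_j$ for $i\neq j$ forces $\langle P_i,P_j\rangle\subset X$ for every $X\in\mathbb{P}_d(\mathcal{L})$, so the relevant discrete invariants are the set of pairs $(i,j)$ with $P_i\in\Pi_j$ together with the question whether $P_1,P_2,P_3$ are collinear. This produces the four cases treated in Subsections~\ref{section case I}--\ref{section case IV}: (I) all $P_i\notin\Pi_j$ and no collinearity, (II) exactly one incident pair, (III) the three points collinear, (IV) two incident pairs without collinearity.

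For Case~I I would iterate Theorem~\ref{theorem main}(ii): after fixing $\mathcal{L}_2\in\mathcal{V}_{d,2}$ (whose structure is fully determined by Section~\ref{section two starpoints}), the allowed $C_3$ lies in the linear system $\Pi'_{1,3}+\Pi'_{2,3}+\mathbb{P}_{\Pi_3,d-2}$, and the constraint that the traces of $C_3$ on $\Pi_{1,3}$ and $\Pi_{2,3}$ match the prescribed divisors $C_i\cap\Pi_{i,3}$, together with $C_3$ being a good $P_3$-cone, cuts out a zero-dimensional fiber whose cardinality (after quotienting by the scaling $\mathbb{C}^*$-action on cone equations) should turn out to be $\phi(d)$. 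This produces $\phi(d)$ components of the expected codimension $f=3d$ predicted by Corollary~\ref{cor}. For Cases~II--IV the parametrization follows the template of equations (\ref{equation V_d,2}) and (\ref{equation line in X}) from Section~\ref{section two starpoints}: fixing coordinates adapted to the forced lines in $X$, one reads the dimension of $\mathbb{P}_d(\mathcal{L})$ directly from the block structure of the defining polynomial, and irreducibility is verified by explicit rational parametrization. Theorem~\ref{theo1} is used to rule out spurious configurations in the collinear case; the remaining pieces all have dimension strictly larger than the expected value and contribute the remaining irreducible components.

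The $N=3$ refinement arises inside Case~III: when $\Pi_i$ is a plane each $C_i$ is a union of $d$ concurrent lines through $P_i$, and the matching of these line arrangements across the pairwise intersections $\Pi_{i,j}$ admits an extra discrete family of solutions of expected dimension, splitting into $\phi(d-1)$ irreducible components. For $N>3$ the cones carry positive-dimensional rulings and this extra family is absorbed into the generic stratum or into one of the excess-dimension strata, producing no new expected-dimensional component. Summing $\phi(d)+\phi(d-1)$ (resp.\ $\phi(d)$) expected-dimension components with the excess-dimension strata from Cases~II--IV gives the claimed total of $2d-2$ components, after checking that the discrete invariants are locally constant (so the strata really are unions of components) and that no two strata share an irreducible component (a codimension-plus-genericity check).

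The main obstacle will be the combinatorial bookkeeping in Case~I and its planar analogue: pinning down the discrete invariant that separates the irreducible components of the cone-extension problem and matching its count to $\phi(d)$ (resp.\ $\phi(d-1)$). I expect this reduces, via the $\mathbb{C}^*$-action on cone equations and the subgroup of $\Aut(\mathbb{P}^1)$ fixing the two trace points on a suitable secant line, to counting orbits of primitive $d$-th roots of unity, which is exactly Euler's totient. The bookkeeping required to separate Cases~III and IV cleanly, and to confirm that Theorem~\ref{theo1} does not force hidden collapses between them when $d\leq 4$, is where I expect the argument to be most delicate.
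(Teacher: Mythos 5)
Your stratification does not match the object being decomposed. By Notation \ref{not4}, $\mathcal{V}_{d,3}$ is by definition a subset of $\mathcal{P}_d^{3,0}$, i.e.\ every configuration in it already satisfies $P_i\notin\Pi_j$ for $i\neq j$; the incidence strata you propose (one incident pair, two incident pairs, points spanning a line contained in $X$) are empty inside $\mathcal{V}_{d,3}$ and are handled by the paper as separate theorems about $\mathcal{V}_{\text{int}}$ and $\mathcal{V}_{\text{ext}}$, which are explicitly ``components outside $\mathcal{V}_{d,3}$.'' The paper's Cases I--IV all live inside $\mathcal{P}_d^{3,0}$ and are distinguished instead by whether $P_3\in\langle P_1,P_2\rangle$ and whether $\Pi_1\cap\Pi_2\subset\Pi_3$ (note that $P_3\in\langle P_1,P_2\rangle$ does \emph{not} put that line inside $X$ here, precisely because there are no incidences). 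This mislabeling is not cosmetic: it makes your component count impossible. You produce $\phi(d)+\phi(d-1)$ (resp.\ $\phi(d)$) expected-dimension components plus a \emph{bounded} number of excess-dimension strata, whereas $2d-2$ grows linearly in $d$; for instance at $d=6$ you would need four extra components from your Cases II--IV, which supply at most one.

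The idea you are missing is that almost all of the $2d-2$ components sit inside the single generic stratum (your Case I) and are separated by a continuous modulus that is forced to be a root of unity. Writing $P_3=(-1:t:t-1:0:\cdots:0)$ and demanding that the restriction of $f=g_{01}X_0X_1+g$ to $\Pi_3$ be a cone with vertex $P_3$ yields, degree by degree, a tridiagonal linear system $M_j$ with $\det(M_j)=(t^j-1)/(t-1)$; hence nonzero solutions exist only when $t^j=1$, and smoothness of $X$ at $P_3$ forces $t^d=1$ or $t^{d-1}=1$ with $t\neq1$. Each such root $t$ gives a distinct component $\mathcal{V}_t$, accounting for $(d-1)+(d-2)$ components, plus $\mathcal{V}_1$ from the paper's Case II, for a total of $2d-2$. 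The components with $\theta(t)<\max\{d-1,d\}$ have excess dimension because the equation acquires extra free coefficients $A_j$ for every $j$ with $t^j=1$; this is also where the $N=3$ versus $N>3$ dichotomy comes from: for $t$ a primitive $(d-1)$-th root of unity the extra coefficient $A_{d-1}$ is a scalar when $N=3$ (so the dimension is still the expected $15$) but a form of positive degree in $N-2$ variables when $N>3$ (excess $N-3$). Your attribution of the $\phi(d-1)$ to a collinear-points stratum, and your expectation that the Case~I fiber is a finite set of cardinality $\phi(d)$, are both incompatible with this structure.
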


Let $\mathcal{L}=(\Pi_i,P_i,C_i)_{i=1}^3$ be an element of $\mathcal{V}_{d,3}$, so $P_i\not\in \Pi_j$ for $i\neq j$. We can choose coordinates $(X_0:\ldots:X_N)$ on $\PP^N$ such that $P_1=(1:0:0:\ldots:0)$, $\Pi_1$ has equation $X_1=0$, $P_2=(0:1:0:\ldots:0)$ and $\Pi_2$ has equation $X_0=0$. Let $X=Z(f)$ be a hypersurface in $\PP_d(\mathcal{L})$. Using Section \ref{section two starpoints}, we have that $f$ is of the form $g_{01}(X_0,\ldots,X_N)X_0X_1+g(X_2,\ldots,X_N)$.

\subsubsection{Case I: $P_3\not\in\langle P_1,P_2\rangle$ and $\Pi_1\cap\Pi_2\not\subset\Pi_3$} \label{section case I}

In this case, we may assume that $P_3=(a:b:a+b:0:\ldots:0)$ and $\Pi_3$ has equation $X_2=X_0+X_1$. Note that $a\neq 0$, $b\neq 0$ and $a+b\neq 0$ since respectively $P_3\not \in \Pi_2$, $P_3\not\in \Pi_1$ and $P_3\not\in\langle P_1,P_2\rangle$. So we may take $a=-1$ and $b=t$ with $t\neq 1$ and $t\neq 0$.

Assume for simplicity that $N=3$ and consider the coordinate transformation defined by
\begin{equation*} \left\{ \begin{array}{lll} Y_0 &=& tX_0+X_1 \\ Y_1 &=& X_0+X_1-X_2 \\ Y_2 &=& X_2 \\ Y_3 &=& X_3\end{array}\right. \quad \Longleftrightarrow \quad
\left\{ \begin{array}{lll} X_0 &=& \frac{1}{t-1}(Y_0-Y_1-Y_2) \\ X_1 &=& \frac{1}{t-1}(-Y_0+tY_1+tY_2) \\ X_2 &=& Y_2 \\ X_3 &=& Y_3\end{array}\right.,
\end{equation*}
so for the new coordinate system, we have $P_3=(0:0:1:0)$ and $\Pi_3:Y_1=0$. The equation of $X$ becomes
\begin{multline}
\frac{(Y_0-Y_1-Y_2)(-Y_0+tY_1+tY_2)}{(t-1)^2}g_{01}\left(\frac{Y_0-Y_1-Y_2}{t-1},\frac{-Y_0+tY_1+tY_2}{t-1},Y_2,Y_3\right)\\+g(Y_2,Y_3)=0,
\end{multline}
so the cone $C_3\subset \Pi_3$ is given by
\begin{equation} \label{equation C_3 case I} \frac{(Y_0-Y_2)(-Y_0+tY_2)}{(t-1)^2}g_{01}\left(\frac{Y_0-Y_2}{t-1},\frac{-Y_0+tY_2}{t-1},Y_2,Y_3\right)\\+g(Y_2,Y_3)=0.
\end{equation}
The left hand side of \eqref{equation C_3 case I} should be independent of the variable $Y_2$. If we write $$g_{01}\left(\frac{Y_0-Y_2}{t-1},\frac{-Y_0+tY_2}{t-1},Y_2,Y_3\right)=\sum_{\substack{i,j\geq 0 \\ i+j\leq d-2}}\, c_{i,j} Y_0^i Y_2^j Y_3^{d-i-j-2}$$ and
$$g(Y_2,Y_3) = \sum_{0\leq j\leq d}\,d_j Y_2^j Y_3^{d-j},$$
the equation \eqref{equation C_3 case I} is independent of $Y_2$ if and only if
\begin{equation} \label{system eq coef}
\left\{ \begin{array}{ll} -tc_{i,j-2} + (t+1)c_{i-1,j-1} - c_{i-2,j} = 0 &\text{for } i,j\geq 2 \\
(t+1)c_{i-1,0} - c_{i-2,1}= 0 &\text{for } i\geq 2 \\
-tc_{1,j-2} + (t+1)c_{0,j-1} = 0 &\text{for } j\geq 2 \\
\frac{-t}{(t-1)^2} c_{0,j-2}+ d_j = 0 &\text{for } j\geq 2 \\
(t-1)c_{0,0} = d_1 = 0
\end{array} \right.
\end{equation}
So for each $j\in\{2,\ldots,d\}$, the coefficients $c_{j-2,0},c_{j-3,1},\ldots,c_{0,j-2}$ satisfy a system $\Sigma_j$ of linear equations, for which the coefficient matrix is equal to the following tridiagonal matrix
\begin{equation*}
M_j = \begin{bmatrix} t+1 & -1 & \ddots & 0 & 0 \\ -t & t+1 & \ddots & 0 & 0 \\ \ddots & \ddots & \ddots & \ddots & \ddots \\ 0 & 0 & \ddots & t+1 & -1\\ 0 & 0 & \ddots & -t & t+1 \end{bmatrix} \in \mathbb{C}^{(j-1)\times (j-1)}.
\end{equation*}
The determinant of $M_j$ equals $$\frac{t^j-1}{t-1} = 1+t+t^2+\ldots+t^{j-1},$$
so $\Sigma_j$ has a non-zero solution if and only if $t^j=1$. In this case, the solutions of $\Sigma_j$ are given by
\begin{equation*}\begin{array}{lll} (c_{j-2,0},c_{j-3,1},\ldots,c_{0,j-2})&=&A_j.(1,1+t,\ldots,1+t+t^2+\ldots+t^{j-1}) \\ &=&A_j(\frac{t-1}{t-1},\frac{t^2-1}{t-1},\ldots,\frac{t^{j-1}-1}{t-1}),\end{array} \end{equation*}
where $A_j\in \mathbb{C}$, hence
\begin{equation*}\begin{array}{lll}
\sum_{k=0}^{j-2}\, c_{j-k-2,k} Y_0^{j-k-2} Y_2^k &=& A_j.\left(\sum_{k=0}^{j-2}\,\frac{t^{k+1}-1}{t-1} Y_0^{j-k-2} Y_2^k \right) \\
&=& \frac{A_j}{t-1} \left(t.\frac{Y_0^{j-1}-(tY_2)^{j-1}}{Y_0-tY_2} - \frac{Y_0^{j-1}-Y_2^{j-1}}{Y_0-Y_2} \right) \\
&=& A_j.\frac{Y_0^j-Y_2^j}{(Y_0-Y_2)(Y_0-tY_2)}. \end{array}
\end{equation*}
So we get $$g_{01}\left(\frac{Y_0-Y_2}{t-1},\frac{-Y_0+tY_2}{t-1},Y_2,Y_3\right) = \sum_{\substack{0<j\leq d\\t^j=1}}\,A_j\frac{(Y_0^j-Y_2^j)Y_3^{d-j}}{(Y_0-Y_2)(Y_0-tY_2)}.$$
This implies that $g_{01}(X_0,X_1,X_2,X_3)$ is of the form
$$\sum_{\substack{0<j\leq d\\t^j=1}}\,A_j\frac{((tX_0+X_1)^j-X_2^j)X_3^{d-j}}{(tX_0+X_1-X_2)(tX_0+X_1-tX_2)} + (X_0+X_1-X_2)g_{012},$$
where $g_{012}\in\mathbb{C}[X_0,X_1,X_2,X_3]_{d-3}$.

Since $$c_{0,j-2}=A_j.\frac{t^{j-1}-1}{t-1}=\frac{-A_j}{t},$$ from \eqref{system eq coef} follows that $d_j=\frac{-A_j}{(t-1)^2}$, thus
$$g(X_2,X_3)=\frac{-1}{(t-1)^2} \sum_{\substack{0\leq j\leq d\\ t^j=1}} A_j X_2^j X_3^{d-j},$$
where we take $A_0=-(t-1)^2 d_0$.

If $t^{d}\neq 1$ and $t^{d-1}\neq 1$, we see that $f$ belongs to the ideal $I:=<X_0X_1,X_3^2>$, hence $P(0:0:1:0)\in \Sing(X)$, a contradiction. So we have either $t^d=1$ or $t^{d-1}=1$. In these cases, for general $A_j\in\mathbb{C}$ and $g_{012}\in\mathbb{C}[X_0,X_1,X_2,X_3]_{d-3}$, the surface $X=Z(f)$ will be smooth.

If $t^d=1$ or $t^{d-1}=1$, denote the component of $\mathcal{V}_{d,3}$ corresponding to the root of unity $t\neq 1$ by $\mathcal{V}_t$ and denote the order of $t$ in $\mathbb{C}^{\star}$ by $\theta(t)$. If we fix $\mathcal{L}\in \mathcal{V}_t$, the values of the numbers $A_j$ are fixed (up to a scalar), but $g_{012}$ can vary, so the dimension of $\PP_d(\mathcal{L})$ equals ${d\choose 3}$. In case $t^d=1$, the dimension of $\mathcal{V}_t$ equals $$3.3+2.2+1+\left(\frac{d}{\theta(t)}+1\right)-1=14+\frac{d}{\theta(t)},$$ since we can choose arbitrarily $\Pi_1,\Pi_2,\Pi_3\subset \PP^3$, $P_1\in\Pi_1$, $P_2\in\Pi_2$, $P_3$ on a certain line in $\Pi_3$ and $A_0,A_{\theta(t)},\ldots,A_d$ (up to a scalar). Analogously, we can see that in case $t^{d-1}=1$, the dimension of $\mathcal{V}_t$ equals $14+\frac{d-1}{\theta(t)}$. Note that in both cases, the dimension of $\mathcal{V}_t$ is dependent of $\theta(t)$ and it is equal to the expected dimension $15$ if and only if $\theta(t)$ is maximal (i.e. $\theta(t)\in\{d-1,d\}$).\\

These results can easily be generalized to $N>3$. Indeed, the defining polynomial $f$ of $X$ becomes
\begin{multline*}
X_0 X_1.\sum_{0<j\leq d;\ t^j=1}\, A_j(X_3,\ldots,X_N)\frac{(tX_0+X_1)^j-X_2^j}{(tX_0+X_1-X_2)(tX_0+X_1-tX_2)} + \\
X_0 X_1 (X_0+X_1-X_2) g_{012}(X_0,\ldots,X_N) -  \sum_{0\leq j\leq d;\ t^j=1}\, \frac{A_j(X_3,\ldots,X_N) X_2^j}{(t-1)^2},
\end{multline*}
where $A_j$ is a polynomial of degree $d-j$.

If $t^d\neq 1$ and $t^{d-1}\neq 1$, we see that $f$ belongs to the ideal $I$ generated by $X_0X_1$ and $X_iX_j$ with $3\leq i\leq j\leq N$, thus $P(0:0:1:0:\ldots:0)\in\Sing(X)$, a contradiction. So we have either $t^d=1$ or $t^{d-1}=1$. In this case, let $\theta(t)$ be the order of $t$ in $\mathbb{C}^{\star}$ and write $\mathcal{V}_t$ to denote the component corresponding to $t\neq 1$. The dimension of $\PP_d(\mathcal{L})$ is ${N+d-3\choose N}$ and $\mathcal{V}_t$ has dimension equal to
\begin{multline*} 3N+2(N-1)+(N-2)+\sum_{0\leq j\leq d;\ t^j=1} {N+d-3-j\choose N-3} - 1 \\ = 6N+\sum_{0\leq j\leq d;\ t^j=1} {N+d-3-j\choose N-3}-5. \end{multline*}
The component $\mathcal{V}_t$ has the expected dimension, which is equal to $$6N+{N+d-3\choose N-3}-4,$$ if and only if $t$ is a primitive $d$th root of unity (i.e. $\theta(t)=d$).

\subsubsection{Case II: $P_3\in\langle P_1,P_2\rangle$ and $\Pi_1\cap\Pi_2\not\subset\Pi_3$} \label{section case II}

We may assume that $P_3=(1:-1:0:\ldots:0)$ and $\Pi_3:X_2=X_0+X_1$. Consider the coordinate transformation defined by
\begin{equation*} \left\{ \begin{array}{lll} Y_0 &=& X_0 \\ Y_1 &=& X_0+X_1 \\ Y_2 &=& X_2-X_0-X_1 \\ Y_i &=& X_i \quad (\forall i\geq 3)\end{array}\right. \quad \Longleftrightarrow \quad
\left\{ \begin{array}{lll} X_0 &=& Y_0 \\ X_1 &=& Y_1-Y_0 \\ X_2 &=& Y_1+Y_2 \\ X_i &=& Y_i \quad (\forall i\geq 3) \end{array}\right.,
\end{equation*}
hence in the new system, we have $P_3=(1:0:0:\ldots:0)$, $\Pi_3:Y_2=0$ and $X$ is defined by $$Y_0(Y_1-Y_0)g_{01}(Y_0,Y_1-Y_0,Y_1+Y_2,Y_3,\ldots,Y_N)+ g(Y_1+Y_2,Y_3,\ldots,Y_N)=0.$$ The cone $C_3\subset \Pi_3$ is given by
\begin{equation} \label{equation C_3 case II} Y_0(Y_1-Y_0)g_{01}(Y_0,Y_1-Y_0,Y_1,Y_3,\ldots,Y_N)+ g(Y_1,Y_3,\ldots,Y_N)=0.\end{equation}
Since \eqref{equation C_3 case II} has to be independent of the variable $Y_0$, we get that $g_{01}(Y_0,Y_1-Y_0,Y_1,Y_3,\ldots,Y_N)\equiv 0$, thus $g_{01}(X_0,\ldots,X_N)$ is divisible by $X_2-X_0-X_1$ and $f$ is of the form $$X_0X_1(X_2-X_0-X_1)g_{012}(X_0,\ldots,X_N)+g(X_2,\ldots,X_N).$$

This case gives rise to a component of $\mathcal{V}_{d,3}$ not described in Case I, which we will denote by $\mathcal{V}_1$.
If we fix $\mathcal{L}$, the polynomial $g$ is fixed, but $g_{012}$ can vary. So the dimension of $\PP_d(\mathcal{L})$ is equal to ${N+d-3\choose N}$ and the dimension of $\mathcal{V}_1$ is $$3N+2(N-1)+{N+d-2\choose N-2}-1.$$

\subsubsection{Case III: $P_3\not\in\langle P_1,P_2\rangle$ and $\Pi_1\cap\Pi_2\subset\Pi_3$} \label{section case III}

We can take the coordinates on $\PP^N$ so that $P_3=(0:0:1:0:\ldots:0)$ and $\Pi_3:X_0=X_1$. The cone $C_3$ is defined by $X_1-X_0=0$ and
\begin{equation} \label{equation C_3 case III} X_0^2g_{01}(X_0,X_0,X_2,X_3,\ldots,X_N)+g(X_2,\ldots,X_N)=0.\end{equation}
Since \eqref{equation C_3 case III} has to be independent of $X_2$, we get that $g$ is independent of $X_2$ and $P_2\in\text{Sing}(X)$, a contradiction. So there are no elements $\mathcal{L}\in \mathcal{V}_{d,3}$ of this form.

\subsubsection{Case IV: $P_3\in\langle P_1,P_2\rangle$ and $\Pi_1\cap\Pi_2\subset\Pi_3$} \label{section case IV}

Assume that $P_3=(1:1:0:\ldots:0)$ and $\Pi_3: X_0=X_1$. Consider the following coordinate transformation
\begin{equation*} \left\{ \begin{array}{lll} Y_0 &=& X_0 \\ Y_1 &=& X_1-X_0 \\ Y_i &=& X_i \quad (\forall i\geq 2)\end{array}\right. \quad \Longleftrightarrow \quad
\left\{ \begin{array}{lll} X_0 &=& Y_0 \\ X_1 &=& Y_0+Y_1 \\ X_i &=& Y_i \quad (\forall i\geq 2) \end{array}\right..
\end{equation*}
For the new coordinates, we have $P_3=(1:0:\ldots:0)$, $\Pi_3:Y_1=0$ and $X$ is given by
$$Y_0(Y_0+Y_1)g_{01}(Y_0,Y_0+Y_1,Y_2,\ldots,Y_N)+g(Y_2,\ldots,Y_N)=0.$$ The equation of the cone $C_3\subset \Pi_3$ is
\begin{equation} \label{equation C_3 case IV} Y_0^2g_{01}(Y_0,Y_0,Y_2,\ldots,Y_N)+g(Y_2,\ldots,Y_N)=0.\end{equation}
Since \eqref{equation C_3 case IV} has to be independent of $Y_0$, we get $g_{01}(Y_0,Y_0,Y_2,\ldots,Y_N)\equiv 0$ and so $g_{01}(X_0,\ldots,X_N)$ is divisible by $X_1-X_0$. Thus, $f$ is of the form
$$X_0X_1(X_1-X_0)g_{01}(X_0,\ldots,X_N)+g(X_2,\ldots,X_N).$$

It is clear that in this case, the elements $\mathcal{L}$ are contained in the component $\mathcal{V}_1$. Moreover, this subset of $\mathcal{V}_1$ has dimension $$2N+2(N-1)+1+{N+d-2\choose N-2}-1.$$

\subsubsection{Conclusion} \label{section conclusion}

The set $\mathcal{V}_{d,3}$ has $(d-1)+(d-2)+1=2d-2$ components, namely $\mathcal{V}_t$ with $t$ a root of the polynomial $$\frac{(t^d-1)(t^{d-1}-1)}{t-1}\in \mathbb{C}[t].$$ The component $\mathcal{V}_t$ has the expected dimension if and only if $\theta(t)\in\{d-1,d\}$ if $N=3$ and $\theta(t)=d$ if $N>3$. So, $\mathcal{V}_{d,3}$ has $\phi(d)+\phi(d-1)$ components of the expected dimension if $N=3$ and $\phi(d)$ if $N>3$.

\begin{remark}
For smooth cubic surfaces (i.e. $d=N=3$), the set $\mathcal{V}_{d,3}$ has three components of the expected dimension $15$ (namely $\mathcal{V}_{-1}$, $\mathcal{V}_{\omega_3}$ and $\mathcal{V}_{\omega_3^2}$, where $\omega_3=e^{2\pi i/3}$) and one component $\mathcal{V}_1\subset \mathcal{V}_{d,3}$ of dimension $16$.
\end{remark}

\begin{remark}
Consider the Fermat hypersurface $X_{d,N}$ (see Example \ref{exa2}). Let $i,j,k$ be different elements in $\{0,\ldots,N\}$ and $\xi_1,\xi_2,\xi_3\in\mathbb{C}$ such that $\xi_1^d=\xi_2^d=\xi_3^d=-1$. Then the triple of star points $$(E_{i,j}(\xi_1),E_{i,j}(\xi_2),E_{i,k}(\xi_3)),$$ with $\xi_1\neq \xi_2$, belongs to the component $\mathcal{V}_t$ with $t=\frac{\xi_1}{\xi_2}$. On the other side, the triple of star points
$$(E_{i,j}(\xi_1),E_{i,k}(\xi_2),E_{j,k}(\xi_3))$$ belongs to $\mathcal{V}_t$ with $t=(\frac{\xi_2}{\xi_1\xi_3})^2$. \end{remark}

\subsection{Components outside $\mathcal{V}_{d,3}$ : Intermediate case}

We show the following result for the intermediate case.
\begin{theorem} \label{thm extremal}
Let $\mathcal{V}_{\text{int}}$ be the set of configurations $\mathcal{L}\in (\mathcal{P}_d)^3$ that are suited for degree $d\geq 3$ in $\mathbb{P}^N$ and such that $P_1\not\in\Pi_2\cup\Pi_3$ but $\langle P_2,P_3\rangle\subset \Pi_2\cap\Pi_3$. Then $\mathcal{V}_{\text{int}}$ is irreducible of dimension $$3N+(N-1)+2(N-2)+{N+d-3\choose N-3}+\sum_{k=1}^{d-1}\,{N+d-k-4\choose N-3}-1.$$
\end{theorem}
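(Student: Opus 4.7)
The plan is to prove the theorem by an explicit coordinate analysis, in the spirit of Cases I--IV of Section \ref{section three starpoints}. Since $\langle P_2,P_3\rangle\subset\Pi_2\cap\Pi_3$ and $P_1\notin\Pi_2\cup\Pi_3$, one can use the $\mathrm{PGL}(N+1)$-action to fix coordinates $(X_0:\cdots:X_N)$ so that
$$P_2=(0:1:0:\cdots:0),\ P_3=(0:0:1:0:\cdots:0),\ \Pi_2=Z(X_0),\ \Pi_3=Z(X_3).$$
The remaining positional data is a pair $(P_1,\Pi_1)$ ranging over an open subset of the $(2N-1)$-dimensional incidence variety $\{P_1\in\Pi_1\}$, and the $\mathrm{PGL}$-stabilizer of the fixed data must be reintroduced to recover the full orbit under coordinate change.

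Next I would analyze the constraints on the defining polynomial $f$ of $X\in\mathbb{P}_d(\mathcal{L})$. The cone condition at $P_2$ forces every monomial of $f$ with $X_0$-degree $0$ to have $X_1$-degree $0$; the cone condition at $P_3$ forces every monomial with $X_3$-degree $0$ to have $X_2$-degree $0$. Exactly as in the derivation of equation \eqref{equation line in X} transposed to the present coordinates, these two conditions yield the decomposition
$$f=X_0X_3\,q+X_0\,h_0(X_0,X_1,X_4,\ldots,X_N)+X_3\,h_3(X_2,X_3,X_4,\ldots,X_N)+h(X_4,\ldots,X_N),$$
with $q,h_0,h_3,h$ homogeneous of degrees $d-2,d-1,d-1,d$, and the four summands lying in linearly independent subspaces of $V_d$. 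In particular $\langle P_2,P_3\rangle\subset X$, which is the reason the configuration is called intermediate.

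I would then impose the cone condition at $P_1$. By Lemma \ref{lem3} this is the linear condition $\Pi_1\subset\Delta_{P_1}(X)$, i.e. $\sum_i a_i\partial f/\partial X_i\equiv 0\pmod{\pi_1}$, where $P_1=(a_0:\cdots:a_N)$ and $\pi_1=0$ defines $\Pi_1$. Tracking which monomial slots of $q,h_0,h_3,h$ survive under restriction of the polar to $\Pi_1$, a direct count shows that for generic $(P_1,\Pi_1)$ the surviving subspace has projective dimension $\binom{N+d-3}{N-3}+\sum_{k=1}^{d-1}\binom{N+d-k-4}{N-3}-1$. Combining this with the $3N+(N-1)+2(N-2)$ parameters carried by the configuration $(\Pi_i,P_i)_{i=1}^3$ (namely $3N$ for the three hyperplanes, $N-1$ for $P_1\in\Pi_1$, and $2(N-2)$ for $P_2,P_3\in\Pi_2\cap\Pi_3\cong\mathbb{P}^{N-2}$) yields the claimed total dimension.

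For irreducibility, the base parameterizing $(\Pi_i,P_i)_{i=1}^3$ is an open subset of a bundle whose fibers are irreducible (the $(\Pi_2,\Pi_3)$ vary in an open subset of $((\mathbb{P}^N)^*)^2$, the pair $(P_2,P_3)$ moves in the irreducible $\mathbb{P}^{N-2}\times\mathbb{P}^{N-2}$ cut out by $\Pi_2\cap\Pi_3$, and $(P_1,\Pi_1)$ moves in the incidence variety); hence this base is irreducible. Over it, the valid cone triples form a projective-linear-system fibration by the previous step, so $\mathcal{V}_{\text{int}}$ is irreducible. The main obstacle of the proof is precisely the rank computation of the cone condition at $P_1$: one has to verify that, for generic $(P_1,\Pi_1)$, the conditions obtained by eliminating $\pi_1$ from $\sum_i a_i\partial f/\partial X_i$ are independent against the decomposition of $f$ found above, so that the sum $\binom{N+d-3}{N-3}+\sum_{k=1}^{d-1}\binom{N+d-k-4}{N-3}$ emerges as the exact dimension rather than an upper bound.
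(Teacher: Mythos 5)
Your setup is sound and runs parallel to the paper's own proof: you normalize coordinates, translate the cone conditions at two of the three points into the vanishing of certain blocks of monomials of $f$, and reduce the theorem to analyzing the remaining cone condition as a linear system on the surviving coefficients. The only structural difference is the order: the paper first imposes the conditions at $P_1$ and $P_2$ (obtaining $f=X_0X_1g_{01}+g(X_2,\ldots,X_N)$ as in Section \ref{section two starpoints}) and saves for last the condition at $P_3$, the point lying on the distinguished line, whereas you impose the two conditions at the collinear points $P_2,P_3$ first and treat the condition at $P_1$ last. Your decomposition $f=X_0X_3q+X_0h_0+X_3h_3+h$ with the stated variable restrictions is correct, as is the observation that $\langle P_2,P_3\rangle\subset X$ and the $3N+(N-1)+2(N-2)$ count for the positional data.

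The difficulty is that the step which actually produces the theorem is absent. You assert that ``a direct count shows'' the surviving data has projective dimension $\binom{N+d-3}{N-3}+\sum_{k=1}^{d-1}\binom{N+d-k-4}{N-3}-1$, and then concede in your final sentence that verifying this rank computation is ``the main obstacle of the proof.'' That obstacle \emph{is} the proof: the entire content of the dimension formula, and of the irreducibility of the fibres over the positional base, lies in solving that linear system. In the paper this is done explicitly: expanding $g_{01}$ and $g$ in monomials, the third cone condition becomes the recursion \eqref{system eq coef 2}, namely $a_{i-1,j}+a_{i,j-1}=0$, $b_j+a_{0,j-1}=0$, $b_d=0$, whose closed-form solution $a_{i,k-i-1}=-(-1)^ib_k$ yields the normal form $f=X_0X_1(X_3-X_0)g_{013}-X_0\sum_kB_k(X_2^k-(X_2-X_1)^k)+X_3\sum_kB_kX_2^k+B_0$; the free parameters $B_0,B_1,\ldots,B_{d-1}$, homogeneous of degrees $d,d-2,d-3,\ldots,0$ in $X_3,\ldots,X_N$, are exactly what the binomial sum counts, and the linearity of this parametrization gives irreducibility. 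Without carrying out the analogous elimination in your coordinates you obtain neither the exact dimension (imposing linear conditions only bounds the solution space from above, and says nothing exact about its image in the space of cone triples) nor a parametrization from which irreducibility follows. A secondary imprecision: the quantity you call the projective dimension of the ``surviving subspace'' must be the dimension of the \emph{image} of the solution space under restriction to $\Pi_1\cup\Pi_2\cup\Pi_3$, i.e.\ of the space of achievable cone triples; the solution space of $f$'s itself is larger by $\binom{N+d-3}{N}$, the kernel $\pi_1\pi_2\pi_3V_{d-3}$ that the paper isolates as the $g_{013}$ term. Your write-up conflates these two spaces.
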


Let $\mathcal{L}\in (\mathcal{P}_d)^3$ with $\PP_d(\mathcal{L})\neq\emptyset$ such that $P_1\not\in\Pi_2\cup\Pi_3$ and $P_2,P_3\not\in \Pi_1$, but $P_2\in \Pi_3$. In this case, we have that $\langle P_2,P_3\rangle\subset \Pi_2\cap\Pi_3$.

We can choose coordinates $(X_0:\ldots:X_N)$ such that $P_1=(1:0:0:\ldots:0)$, $\Pi_1$ has equation $X_1=0$, $P_2=(0:1:0:\ldots:0)$, $\Pi_2$ has equation $X_0=0$, $P_3=(0:1:1:0:\ldots:0)$ and $\Pi_3$ has equation $X_0=X_3$. From Section \ref{section two starpoints} follows that $f$ is of the form $$X_0X_1g_{01}(X_0,\ldots,X_N)+g(X_2,\ldots,X_N).$$

Consider the following coordinate transformation \begin{equation*} \left\{ \begin{array}{lll} Y_0 &=& X_0-X_3 \\ Y_1 &=& X_1-X_2 \\ Y_i &=& X_i \quad (\forall i\geq 2)\end{array}\right. \quad \Longleftrightarrow \quad
\left\{ \begin{array}{lll} X_0 &=& Y_0+Y_3 \\ X_1 &=& Y_1+Y_2 \\ X_i &=& Y_i \quad (\forall i\geq 2) \end{array}\right..
\end{equation*}
In this system, $P_3=(0:0:1:0:\ldots:0)$, $\Pi_3:Y_0=0$ and $X$ has equation $$(Y_0+Y_3)(Y_1+Y_2)g_{01}(Y_0+Y_3,Y_1+Y_2,Y_2,Y_3,\ldots,Y_N)+g(Y_2,Y_3,\ldots,Y_N)=0,$$ so the cone $C_3\subset \Pi_3$ is given by \begin{equation} \label{equation C_3 intermediate} Y_3(Y_1+Y_2)g_{01}(Y_3,Y_1+Y_2,Y_2,Y_3,\ldots,Y_N)+g(Y_2,Y_3,\ldots,Y_N)=0. \end{equation}

For simplicity, we will first assume $N=3$. If we write $$g_{01}(Y_3,Y_1+Y_2,Y_2,Y_3)=\sum_{\substack{i,j\geq 0 \\ i+j\leq d-2}}\, a_{i,j} Y_1^i Y_2^j Y_3^{d-i-j-2}$$ and $g(Y_2,Y_3)=\sum_{j=0}^d\,b_j Y_2^j Y_3^{d-j}$, we see that the equation \eqref{equation C_3 intermediate} of $C_3$ is independent of the variable $Y_2$ if and only if
\begin{equation} \label{system eq coef 2} \left\{ \begin{array}{ll} a_{i-1,j}+a_{i,j-1}=0 & (\forall i>0,j>0 \text{ with } i+j\leq d-1) \\
b_j+a_{0,j-1}=0 & (\forall j\in \{1,\ldots,d-1\}) \\ b_d=0 \end{array}\right.. \end{equation} If \eqref{system eq coef 2} is satisfied, we have $a_{i,k-i-1}=-(-1)^ib_{k}$ for all $1\leq k\leq d-1$, and thus $$\sum_{i=0}^{k-1}\,a_{i,k-i-1} Y_1^i Y_2^{k-i-1}=-b_k \sum_{i=0}^k\,(-Y_1)^i Y_2^{k-i-1}= -b_k \frac{Y_2^k-(-Y_1)^k}{Y_2+Y_1}.$$ It follows $$g_{01}(Y_3,Y_1+Y_2,Y_2,Y_3) = -\frac{1}{Y_2+Y_1} \sum_{k=1}^{d-1}\,b_k(Y_2^k-(-Y_1)^k)Y_3^{d-k-1},$$ so $g_{01}(X_0,X_1,X_2,X_3)$ is of the form $$(X_3-X_0)g_{013}(X_0,\ldots,X_N)-\frac{1}{X_1} \sum_{k=1}^{d-1}\,b_k(X_2^k-(X_2-X_1)^k)X_3^{d-k-1},$$ where $g_{013}$ is a polynomial of degree $d-3$. We conclude that $f$ is of the form
$$ X_0X_1(X_3-X_0)g_{013}-X_0 \sum_{k=1}^{d-1}\,b_k(X_2^k-(X_2-X_1)^k)X_3^{d-k-1} + \sum_{k=0}^{d-1}\,b_k X_2^k X_3^{d-k}.$$ So, in this case, for $N=3$, the dimension of $\PP_d(\mathcal{L})$ is ${d\choose 3}$ and we get a component of the set of surfaces with three star points of dimension $$3.3+2+2.1+d+{d\choose 3}-1=12+d+{d\choose 3}.$$

These results can easily be generalized to general values of $N\geq 3$. Indeed, in this case, $f$ is of the form $$X_0X_1(X_3-X_0)g_{013}-X_0 \sum_{k=1}^{d-1}\,B_k(X_2^k-(X_2-X_1)^k) + X_3 \sum_{k=1}^{d-1}\,B_k X_2^k + B_0,$$ where the polynomials $B_i$ are homogeneous in the variables $X_3,\ldots,X_N$ of degree $d$ for $i=0$ and $d-k-1$ for $k\in\{1,\ldots,d-1\}$. We get a component of the set of hypersurfaces with three star points of dimension $$3N+(N-1)+2(N-2)+{N+d-3\choose N}+{N+d-3\choose N-3}+\sum_{k=1}^{d-1}\,{N+d-k-4\choose N-3}-1.$$

\subsection{Components outside $\mathcal{V}_{d,3}$ : Extremal case}

We show the following result for the extremal case.
\begin{theorem}
Let $\mathcal{V}_{\text{ext}}$ be the set of configurations $\mathcal{L}\in (\mathcal{P}_d)^3$ that are suited for degree $d$ in $\mathbb{P}^N$ and such that $P_i\in\Pi_j$ for all $i,j\in\{1,2,3\}$. If $d\geq 6$, then $\mathcal{V}_{\text{ext}}$ has two irreducible components. The first component $\mathcal{V}_{\text{ext,I}}$ corresponds to configurations $\mathcal{L}$ with $\Pi_1,\Pi_2,\Pi_3$ linearly independent and has dimension $$3N+3(N-3)+3{N+d-4\choose N-2}+3{N+d-5\choose N-4}+{N+d-6\choose N-6}-1;$$ the second component $\mathcal{V}_{\text{ext,II}}$ corresponds to configurations $\mathcal{L}$ with $\Pi_1,\Pi_2,\Pi_3$ linearly dependent and has dimension $$3N+2(N-3)+{N+d-3\choose N-1}+2{N+d-4\choose N-3}+{N+d-5\choose N-3}+{N+d-5\choose N-5}.$$
\end{theorem}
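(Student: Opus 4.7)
The plan is to split $\mathcal{V}_{\text{ext}}$ according to whether $\Pi_1,\Pi_2,\Pi_3$ are linearly independent (Case I) or linearly dependent (Case II), and to handle each case by a coordinate-adapted parameter count in the same spirit as the calculations already done for $\mathcal{V}_{d,3}$. Since $P_i\in\Pi_j$ for all $i,j$, the three points are forced into $\Lambda:=\Pi_1\cap\Pi_2\cap\Pi_3$; in Case I this $\Lambda$ has dimension $N-3$, and in Case II the hyperplanes form a pencil sharing a codim-$2$ linear subspace $\Lambda$ of dimension $N-2$. I would first check that these two loci are disjoint outside a smaller closed subset and that each one is irreducible via its natural parametrization.

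In \emph{Case I}, normalize coordinates so that $\Pi_i=Z(X_i)$ for $i=1,2,3$ and $P_1=E_0$, $P_2=E_4$, $P_3=E_5$ (three generic points in $\{X_1=X_2=X_3=0\}$), then expand
\[ f=\sum_{i,j,k\ge 0}X_1^iX_2^jX_3^k F_{ijk}(X_0,X_4,X_5,\ldots,X_N). \]
The three cone conditions translate into: $F_{0jk}$ is independent of $X_0$, $F_{i0k}$ is independent of $X_4$, and $F_{ij0}$ is independent of $X_5$. Breaking the count down by which of $(i,j,k)$ vanish produces four ``blocks'' of $F$'s whose dimensions sum to $\binom{N+d-3}{N}+3\binom{N+d-4}{N-2}+3\binom{N+d-5}{N-4}+\binom{N+d-6}{N-6}$. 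Combining with the $(\Pi_i,P_i)$-parameter count $3N+3(N-3)$, subtracting the kernel of the restriction $r$ (which, since the $\pi_i$ are linearly independent, is $\pi_1\pi_2\pi_3 V_{d-3}$ of dimension $\binom{N+d-3}{N}$), and adjusting by $-1$ for the diagonal projective scaling in the image, yields exactly the claimed formula for $\dim\mathcal{V}_{\text{ext,I}}$.

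In \emph{Case II}, I would take $\Pi_1:X_1=0$, $\Pi_2:X_0=0$, $\Pi_3:X_0=tX_1$, where $t$ is the intrinsic coordinate on the pencil, and $P_1=E_2,P_2=E_3,P_3=E_4$. Writing $f=\sum_{a,b}X_0^aX_1^bF_{ab}(X_2,\ldots,X_N)$, the cone conditions at $P_1,P_2$ again read $F_{a0}\perp X_2$ and $F_{0b}\perp X_3$. The cone condition at $P_3$ is mixed: for each $c\ge 0$, the combination
\[ G_c(t):=\sum_{a+b=c}t^aF_{ab} \]
must be independent of $X_4$. I would decompose each $F_{ab}=\sum_k X_4^kF_{ab,k}$ and reduce the third condition to the linear system $\sum_{a+b=c}t^aF_{ab,k}=0$ for $k\ge 1$. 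The ``bulk'' cases $c\ge 2$ (where $F_{ab}$ with $a,b\ge 1$ span the image) contribute the $\binom{N+d-3}{N-1}$ term; the ``edge'' cases $c=0,1$, where the map to $G_c$ is not surjective because one or both of $F_{c0},F_{0c}$ are pre-constrained, contribute the remaining terms $2\binom{N+d-4}{N-3}$, $\binom{N+d-5}{N-3}$, and $\binom{N+d-5}{N-5}$. Combining with the parameter count $2N+1$ (two generic hyperplanes plus the pencil coordinate) $+\;3(N-2)$ (points in $\Lambda$) gives the stated dimension.

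\emph{Irreducibility and smoothness.} In both cases the configuration $(\Pi_i,P_i)$ varies in a manifestly irreducible parameter space (products of projective spaces, Grassmannians, and a pencil), and the fiber of compatible cone triples is the projectivization of a linear subspace of $\bigoplus_i V_{\Pi_i,d}^{\text{cone}}$, hence irreducible. Generic smoothness of $X\in\mathbb{P}_d(\mathcal{L})$ follows from a Bertini argument exactly as in the proof of Theorem \ref{theorem main}.

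\emph{Main obstacle.} The hard part is organizing the linear algebra in Case II. The mixing in $G_c(t)$ produces $t$-dependent relations among the $F_{ab}$ with $a+b=c$, and the rank of the constraint map drops at the edge cases $c=0,1$ in a way that does not drop for $c\ge 2$; bookkeeping each $c$ separately and re-assembling the contributions into the stated closed form is the computational heart of the argument. The hypothesis $d\ge 6$ should enter precisely at the point where one verifies that both components genuinely exist and realize the claimed dimensions—for small $d$ the good-cone condition severely restricts the allowable cones, so either component can fail to exist or collapse onto the other.
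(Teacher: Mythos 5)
Your overall strategy is the paper's: split by linear (in)dependence of $\Pi_1,\Pi_2,\Pi_3$, normalize coordinates, expand $f$, and read off the cone conditions as vanishing/independence constraints on the coefficient forms. Your Case I is essentially identical to the paper's computation (your four blocks are exactly its pieces $g_{345}$; $g_{34},g_{35},g_{45}$; $g_3,g_4,g_5$; $g$, with the same dimensions, and the kernel $\pi_1\pi_2\pi_3V_{d-3}$ accounts for the discrepancy between the hypersurface count and the configuration count). Your Case II setup is a legitimate reorganization of the same linear algebra — the paper normalizes the pencil parameter to $t=1$ and substitutes $X_4=X_3$, demanding independence of the third vertex coordinate, which is your $G_c$ condition — but you defer the entire computation, and be warned that the bookkeeping does not split as cleanly as you predict: the terms $2{N+d-4\choose N-3}$, ${N+d-5\choose N-3}$, ${N+d-5\choose N-5}$ in the stated formula are not simply the $c=0,1$ contributions of your grading (a direct count of your $c=1$ level gives a smaller number), because the constraints couple the $X_2$-, $X_3$- and $X_4$-independent parts across the decomposition, which is exactly the tangle the paper's $a_2,b_{23},c_{124},\ldots$ analysis is untangling. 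You should also record the a priori reductions the paper makes before choosing coordinates: $P_3\notin\langle P_1,P_2\rangle$ (since $\langle P_1,P_2\rangle\subset C_1\subset X$, three collinear star points on a line in $X$ would contradict Proposition \ref{prop2}) and $N>4$ (else the plane spanned by the $P_i$ would be a component of each $C_i$, contradicting Lemma \ref{lem1}); without these the normalizations and the binomial coefficients in the answer do not make sense.

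The genuine gap is that you never prove there are exactly \emph{two} irreducible components, and you misidentify where $d\geq 6$ enters. Linear dependence of the three hyperplanes is a closed condition, so $\mathcal{V}_{\text{ext}}$ is the disjoint union of the open irreducible locus $\mathcal{V}_{\text{ext,I}}$ and the closed irreducible locus $\mathcal{V}_{\text{ext,II}}$; the closure of the former is always a component, but the latter is a second component only if it is not contained in that closure. ``Checking the two loci are disjoint outside a smaller closed subset'' does not address this containment question. The paper settles it by comparing dimensions: $d_{\text{ext,II}}=d_{\text{ext,I}}+4-N+{N+d-6\choose N-1}$, which is positive precisely when $d\geq 6$, so for $d\geq 6$ the closed locus is too big to lie in $\overline{\mathcal{V}_{\text{ext,I}}}$. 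That is the \emph{only} use of the hypothesis $d\geq 6$; it has nothing to do with the good-cone condition restricting allowable cones for small $d$ (indeed for $d\leq 5$ both loci still exist with the computed dimensions, the inequality merely reverses and the two-component claim is left by the authors as an expectation). Without this final comparison your argument establishes the two dimension formulas but not the statement that $\mathcal{V}_{\text{ext}}$ has two components.
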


Assume $\mathcal{L}\in (\mathcal{P}_d)^3$ with $\PP_d(\mathcal{L})\neq\emptyset$ and $P_i\in\Pi_j$ for all $i,j\in\{1,2,3\}$. In this case, it is easy to see that $$L=\langle P_1,P_2,P_3\rangle \subset \Pi_1\cap \Pi_2 \cap \Pi_3\cap X = C_1\cap C_2\cap C_3.$$
Note that $P_3\not\in \langle P_1,P_2\rangle$ (see Proposition \ref{prop2}) and that $N>4$ (for $N=4$, the plane $L$ would be a component of the cones $C_i$, which contradicts Lemma \ref{lem1}).

Choose coordinates $(X_0:\ldots:X_N)$ on $\PP^N$ so that $P_1=(1:0:0:\ldots:0)$, $P_2=(0:1:0\ldots:0)$, $P_3=(0:0:1:\ldots:0)$, $\Pi_1$ has equation $X_3=0$ and $\Pi_2$ has equation $X_4=0$.

\subsubsection{Case I: $\Pi_1\cap\Pi_2 \not\subset \Pi_3$} \label{subsection extr I}

We can assume that $\Pi_3:X_5=0$. We can write $$f=X_3X_4X_5g_{345}+X_3X_4g_{34}+X_3X_5g_{35}+X_4X_5g_{45}+X_3g_3+X_4g_4+X_5g_5+g,$$
where for example $g_{34}$ is independent of $X_5$, $g_3$ is independent of $X_4$ and $X_5$ and $g$ is independent of $X_3$, $X_4$ and $X_5$.

Since $C_1\subset \Pi_1$ is defined by $X_4X_5g_{45}+X_4g_4+X_5g_5+g=0$, we get that $g_{45},g_4,g_5,g$ are independent of the variable $X_0$.
Analogously, we get that $g_{35},g_3,g_5,g$ are independent of $X_1$ and $g_{34},g_3,g_4,g$ independent of $X_2$, so we conclude $f$ is of the form
\begin{equation*} \begin{array}{c}
X_3X_4X_5g_{345}(X_0,\ldots,X_N)+ X_3X_4g_{34}(X_0,X_1,X_3,X_4,X_6,\ldots,X_N)+ \\ X_3X_5g_{35}(X_0,X_2,X_3,X_5,\ldots,X_N)+ X_4X_5g_{45}(X_1,X_2,X_4,X_5,\ldots,X_N)+ \\ X_3g_3(X_0,X_3,X_6,\ldots,X_N)+X_4g_4(X_1,X_4,X_6,\ldots,X_N)+ \\X_5g_5(X_2,X_5,X_6,\ldots,X_N)+g(X_6,\ldots,X_N).\end{array}\end{equation*}

When we fix the element $\mathcal{L}$, the polynomials $g_{34},g_{35},g_{45},g_3,g_4,g_5,g$ are fixed, but $g_{345}$ can still vary, so the dimension of $\PP_d(\mathcal{L})$ is equal to ${N+d-3\choose N}$. This case gives rise to a component of the set of hypersurfaces with three star points of dimension $d_{\text{ext,I}}$ equal to $$3N+3(N-3)+{N+d-3\choose N}+3{N+d-4\choose N-2}+3{N+d-5\choose N-4}+{N+d-6\choose N-6}-1.$$

\subsubsection{Case II: $\Pi_1\cap\Pi_2 \subset \Pi_3$}

We may assume $\Pi_3$ is the hyperplane $X_4-X_3=0$. Write $f$ as $X_3X_4g_{34}+X_3g_3+X_4g_4+g$, with $g_3$ independent of $X_4$, $g_4$ of $X_3$ and $g$ of $X_3$ and $X_4$.

Since $C_1\subset \Pi_1$ is given by $X_4g_4+g$, we see that $g_4$ and $g$ are independent of $X_0$. By considering $C_2\subset \Pi_2$, we get that $g_3$ and $g$ are independent of $X_1$. The cone $C_3$ is has equations $X_4-X_3=f=0$, so \begin{multline*} X_3^2g_{34}(X_0,X_1,X_2,X_3,\underline{X_3},X_5,\ldots,X_N)+X_3g_3(X_0,X_2,X_3,X_5,\ldots,X_N)+\\ X_3g_4(X_1,X_2,\underline{X_3},X_5,\ldots,X_N)+g(X_2,X_5,\ldots,X_N)
\end{multline*} is independent of the variable $X_2$. This implies that $g$ is independent of $X_2$. If we write $g_i=X_ih_i+h'_i$ for $i\in\{3,4\}$ with $h'_i$ independent of $X_i$, we see that $$h'_3(X_0,X_2,X_5,\ldots,X_N)+h'_4(X_1,X_2,X_5,\ldots,X_N)$$ and \begin{multline*} g_{34}(X_0,X_1,X_2,X_3,\underline{X_3},X_5,\ldots,X_N)+h_3(X_0,X_2,X_3,X_5,\ldots,X_N)\\+h_4(X_1,X_2,\underline{X_3},X_5,\ldots,X_N) \end{multline*} are independent of $X_2$. If we write $h'_3=X_2b_2+b$ and $h'_4=X_2c_2+c$ with $b$ and $c$ independent of $X_2$, we have that $b_2+c_2\equiv 0$. Hence, $b_2$ is independent of $X_0$ and $c_2$ is independent of $X_1$. On the other hand, if $g_{34}=X_2a_2+a$, $h_3=X_2b_{23}+b_3$ and $h_4=X_2c_{24}+c_4$ with $a$, $b_3$ and $c_4$ independent of $X_2$, we get \begin{multline*} a_2(X_0,X_1,X_2,X_3,\underline{X_3},X_5,\ldots,X_N)+b_{23}(X_0,X_2,X_3,X_5,\ldots,X_N)\\+c_{24}(X_1,X_2,\underline{X_3},X_5,\ldots,X_N)\equiv 0. \end{multline*}
We see that $a_2(X_0,X_1,X_2,X_3,\underline{X_3},X_5,\ldots,X_N)$ does not contain terms divisible by $X_0X_1$, hence we can write $a_2$ as $X_0X_1(X_4-X_3)a_{012}+X_0a_{02}+X_1a_{12}+a'$, with $a_{02}$ independent of $X_1$, $a_{12}$ independent of $X_0$ and $a'_2$ independent of $X_0$ and $X_1$. Also write $b_{23}$ as $X_0b_{023}+b'_{23}$ with $b'_{23}$ independent of $X_0$ and $c_{24}$ as $X_1c_{124}+c'_{24}$ with $c'_{24}$ independent of $X_1$. We have that $$b_{023}(X_0,X_2,X_3,X_5,\ldots,X_N)\equiv -a_{02}(X_0,X_2,X_3,\underline{X_3},X_5,\ldots,X_N),$$ $$c_{124}(X_1,X_2,\underline{X_3},X_5,\ldots,X_N)\equiv -a_{12}(X_1,X_2,X_3,\underline{X_3},X_5,\ldots,X_N)$$ or $$c_{124}(X_1,X_2,X_4,X_5,\ldots,X_N)\equiv -a_{12}(X_1,X_2,\underline{X_4},X_4,X_5,\ldots,X_N)$$ and \begin{multline*} b'_{23}(X_2,X_3,X_5,\ldots,X_N)\equiv \\-a'_2(X_2,X_3,\underline{X_3},X_5,\ldots,X_N)-c'_{24}(X_2,\underline{X_3},X_5,\ldots,X_N).\end{multline*}
We conclude that $f$ is of the form \begin{multline*} X_3X_4(X_2a_2+a)+X_3(X_2X_3b_{23}+X_3b_3+X_2b_2+b)+\\ X_4(X_2X_4(X_1c_{124}+c'_{24})+X_4c_4+X_2c_2+c)+g,\end{multline*} with the above conditions (in particular, $b_{23}$, $b_2$ and $c_{124}$ are fixed, given $a_2$, $c_2$ and $c'_{24}$).

In this case, the dimension of $\PP_d(\mathcal{L})$ is again ${N+d-3\choose N}$. The set of hypersurfaces, corresponding to star point configurations $\mathcal{L}\in(\mathcal{P}_d)^3$ with $P_i\in \Pi_j$ and $\Pi_1,\Pi_2,\Pi_3$ linearly dependent, gives rise to an irreducible locus of dimension $d_{\text{ext,II}}$ equal to $$3N+2(N-3)+{N+d-2\choose N}+2{N+d-4\choose N-3}+{N+d-5\choose N-3}+{N+d-5\choose N-5}.$$
Since $$d_{\text{ext,II}}=d_{\text{ext,I}}+4-N+{N+d-6\choose N-1},$$ we have that $d_{\text{ext,II}}>d_{\text{ext,I}}$ for $d\geq 6$. This implies Theorem \ref{thm extremal}. The authors expect that the statement of the theorem also holds for $d\in\{3,4,5\}$, although is this case $d_{\text{ext,II}}<d_{\text{ext,I}}$ (except for the case $N=d=5$, where we have $d_{\text{ext,II}}=d_{\text{ext,I}}=116$).

\section*{Acknowledgements}
The authors would like to thank L. Chiantini, since the generalization of total inflection points on plane curves to higher dimensional varieties was motivated by a question of him. Both authors are partially supported by the Fund of Scientific Research - Flanders
(G.0318.06).

\begin{bibsection}
\begin{biblist}

\bib{Beh}{article}{
    author={Beheshti, R.},
    title={Lines on projective hypersurfaces},
    journal={J. Reine. Angew. Math.},
    volume={592},
    year={2006},
    pages={1-21},
}
\bib{CP}{article}{
    author={Cheltsov, I.},
    author={Park, J.},
    title={Log canonical thresholds and generalized Eckardt points},
    journal={Sb. Math.},
    volume={193},
    year={2002},
    pages={149-160},
}
\bib{CC1}{article}{
    author={Cools, F.},
    author={Coppens, M.},
    title={Some general results on plane curves with total inflection points},
    journal={Arch. Math.},
    volume={80},
    year={2007},
    pages={73-80},
}
\bib{CC2}{article}{
    author={Cools, F.},
    author={Coppens, M.},
    title={Plane curves with 3 or 4 total inflection points},
    journal={J. Lond. Math. Soc.},
    volume={77},
    year={2008},
    pages={149-163},
}
\bib{DVG}{article}{
    author={Dardanelli, E.},
    author={Van Geemen, B.},
    title={Hessians and the moduli space of cubic surfaces},
    journal={Contemp. Math.},
    volume={422},
    year={2007},
    pages={17-36}
}
\bib{Eck}{article}{
    author={Eckardt, F.E.},
    title={Ueber diejenigen Fl\"{a}chen dritter Grades, auf denen sich
    drei geraden linien in einem Punkte schneiden},
    journal={Math. Ann.},
    volume={10},
    year={1876},
    pages={227-272},
}
\bib{HMP}{article}{
    author={Harris, J.},
    author={Mazur, R.},
    author={Pandharipande, R.},
    title={Hypersurfaces of low degree},
    journal={Duke Math. J.},
    volume={95},
    year={1998},
    pages={125-160}
}
\bib{Har}{book}{
    author={Hartshorne, R.},
    title={Algebraic Geometry},
    series={Graduate Texts in Mathematics},
    volume={52},
    publisher={Springer-Verlag},
    year={1977},
}
\bib{LaTo}{article}{
    author={Landsberg, J.M.},
    author={Tommasi, O.},
    title={On the Debarre-De Jong and Beheshti-Starr conjectures on
    hypersurfaces with too many lines},
    journal={arXiv: 0810.4158},
}
\bib{Ngu1}{book}{
    author={Nguyen, T.C.},
    title={Star points on cubic surfaces},
    publisher={Ph-D Thesis, RU Utrecht},
    year={2000},
}
\bib{Ngu2}{article}{
    author={Nguyen, T.C.},
    title={Non-singular cubic surfaces with star points},
    journal={Vietnam J. Math.},
    volume={29},
    year={2001},
    pages={287-292},
}
\bib{Ngu3}{article}{
    author={Nguyen, T.C.},
    title={On boundaries of moduli spaces of non-singular cubic
    surfaces with star points},
    journal={Kodai Math. J.},
    volume={27},
    year={2004},
    pages={57-73},
}
\bib{Ngu4}{article}{
    author={Nguyen, T.C.},
    title={On semi-stable, singular cubic surfaces},
    journal={S\'eminaires et Congr\`es},
    volume={10},
    year={2005},
    pages={373-389},
}
\bib{Seg}{book}{
    author={Segre, B.},
    title={The non-singular cubic surfaces},
    publisher={Oxford University Press},
    year={1942},
}
\bib{Shr}{article}{
    author={Shreve, D.R.},
    title={On a certain class of symmetric hypersurfaces},
    journal={B. Am. Math. Soc.},
    volume={45},
    year={1939},
    pages={948-951},
}
\bib{Verm}{book}{
    author={Vermeulen, A.M.},
    title={Weierstrass points of weight two on curves of genus
    three},
    series={Ph-D Thesis},
    year={1983},
    publisher={Universiteit van Amsterdam},
}
\bib{Yos1}{article}{
    author={Yoshihara, H.},
    title={Galois points on quartic surfaces},
    journal={J. Math. Soc. Japan},
    volume={53},
    year={2001},
    pages={731-743},
}
\bib{Yos2}{article}{
    author={Yoshihara, H.},
    title={Galois points for smooth hypersurfaces},
    journal={J. Algebra},
    volume={264},
    year={2003},
    pages={520-534},
}
\bib{Zak}{book}{
    author={Zak, F.},
    title={Tangents and secants of algebraic varieties},
    series={Transl. Math. Monogr.},
    volume={127},
    year={1993},
    publisher={AMS},
}

\end{biblist}
\end{bibsection}

\end{document}